\documentclass[a4paper,leqno,11pt]{amsart}

\usepackage{amsfonts,amssymb,verbatim,amsmath,amsthm,latexsym,textcomp,amscd}
\usepackage{latexsym,amsfonts,amssymb,epsfig,verbatim}
\usepackage{amsmath,amsthm,amssymb,latexsym,graphics,textcomp}
\usepackage{graphicx}
\usepackage{color}
\usepackage{url}
\usepackage{enumerate}
\usepackage[mathscr]{euscript}

\input xy
\xyoption{all}

\usepackage{hyperref}

\setlength{\topmargin}{0mm}
\setlength{\textheight}{9.0in}
\setlength{\oddsidemargin}{.1in}
\setlength{\evensidemargin}{.1in}
\setlength{\textwidth}{6.0in}

\newcommand{\uM}{\underline{M}}
\newcommand{\uN}{\underline{N}}

\newcommand{\Z}{\mathbb Z}
\newcommand{\bZ}{\langle \Z \rangle}
\newcommand{\bbZ}{\langle \langle \Z \rangle \rangle}

\newcommand{\bbC}{\langle \langle C \rangle \rangle}
\newcommand{\bbZp}{\langle \langle \Z/p \rangle \rangle}
\newcommand{\bbZq}{\langle \langle \Z/q \rangle \rangle}
\newcommand{\bbZpq}{\langle \langle \Z/pq \rangle \rangle}
\newcommand{\bZp}{\langle \Z/p \rangle}
\newcommand{\bZq}{\langle \Z/q \rangle}

\newcommand{\bC}{\langle  C \rangle }

\newcommand{\F}{\mathbb F}
\newcommand{\cA}{\mathcal A}
\newcommand{\BB}{\mathcal B}
\newcommand{\GG}{\mathcal G}

\newcommand{\MM}{\mathcal{M}}
\newcommand{\CC}{\mathcal{C}}
\newcommand{\DD}{\mathcal{D}}
\newcommand{\KK}{\mathcal{K}}
\newcommand{\QQ}{\mathcal{Q}}
\newcommand{\OO}{\mathcal{O}}
\newcommand{\PP}{\mathcal{P}}
\newcommand{\UU}{\mathcal{U}}

\newcommand{\Ep}{\mathcal{E}_p}

\newcommand{\C}{\mathbb C}

\newcommand{\res}{\mathit{res}}
\newcommand{\tr}{\mathit{tr}}

\newcommand{\Hom}{\mathit{Hom}}
\newcommand{\Ext}{\mathit{Ext}}

\newcommand{\coker}{\mbox{coker}}
\newcommand{\uH}{\underline{H}}
\newcommand{\uA}{\underline{A}}
\newcommand{\ua}{\underline{a}}
\newcommand{\ub}{\underline{b}}

\theoremstyle{plain}
\newtheorem{theorem}{Theorem}[section]
\newtheorem{thm}[theorem]{Theorem}
\newtheorem{lemma}[theorem]{Lemma}
\newtheorem{cor}[theorem]{Corollary}
\newtheorem{prop}[theorem]{Proposition}

\newtheorem{remark}[theorem]{Remark}

\theoremstyle{definition}
\newtheorem{defn}[theorem]{Definition}

\newtheorem*{thma}{Theorem A}
\newtheorem*{thmb}{Theorem B}
\newtheorem*{thmc}{Theorem C}

\author{Samik Basu, Surojit Ghosh}

\address{Stat-Math unit,
Indian Statistical Institute,
Kolkata - 700108,  India.}
\email{samik.basu2@gmail.com; samikbasu@isical.ac.in  }

\address{Department of Mathematics,
University of Haifa,
3498838 Haifa, Israel.} 
\email{surojitghosh89@gmail.com}

\subjclass[2010]{Primary: 55N91, 55P91; Secondary: 57S17, 14M15.}
\keywords{Bredon cohomology, Mackey functor, Grassmann manifolds, equivariant cohomology.}

\begin{document}

\title{Computations in $C_{pq}$-Bredon cohomology}

\begin{abstract}
In this paper, we compute the $RO(C_{pq})$-graded cohomology of $C_{pq}$-orbits. We deduce that in all the cases the Bredon cohomology groups are a function of the fixed point dimensions of the underlying virtual representations. Further, when thought of as a Mackey functor, the same independence result holds in almost all cases. This generalizes earlier computations of Stong and Lewis for the group $C_p$. 

The computations of cohomology of orbits are used to prove a freeness theorem. The analogous result for the group $C_p$ was proved by Lewis. We demonstrate that certain complex projective spaces and complex Grassmannians satisfy the freeness theorem. 
\end{abstract}

\maketitle


\section{Introduction}
Bredon cohomology defined in \cite{Bre67} arises as the natural generalization of ordinary cohomology to the category of $G$-spaces, especially in the context of $G$-equivariant obstruction theory. In the literature, a more favoured cohomology theory for $G$-spaces is the cohomology of the Borel construction, for which non-equivariant techniques may be used to make many computations. On the other hand, Bredon cohomology has turned out to be difficult to compute even for simple spaces like spheres with a linear $G$-action. Their application  in various important examples (for example, \cite{HHR09} uses certain computations for the group $C_{2^n}$) demonstrate that the computations are not without interest. 

The Bredon cohomology groups $H^n_G( - ;\uM)$ are defined for ``coefficient systems" $\uM$ which are functors from the orbit category $\OO_G$ to Abelian groups (see \cite[Section I.4]{May96}). The interesting case is when $\uM$ is actually the restriction of a Mackey functor, in which case the cohomology theory extends to one indexed by virtual representations. For $\alpha \in RO(G)$ the real representation ring of $G$, we write $\uH^\alpha_G(X;\uM)$ for the Mackey functor valued cohomology at the index $\alpha$. In this case if we put $X=S^0$, these groups encode the various cohomology and homology groups of the $G$-representation spheres.      

Perhaps the most important Mackey functor is the Burnside ring Mackey functor $\uA$. This plays a similar role in the category of Mackey functors as $\Z$ plays in the category of Abelian groups which we may think of as coefficients for ordinary cohomology. A precise version of this statement is that the Eilenberg MacLane spectrum $H\uA$ is a commutative ring spectrum and for any Mackey functor $\uM$, $H\uM$ is a module over $H\uA$ (Proposition 5.4 of \cite{Lew96}). The starting point of computations for $RO(G)$-graded cohomology must therefore be $\uH^\ast_G(X; \uA)$, where $X$ varies over the set of $G$-points, that is, the set of all orbits $G/H$. 

The Mackey functors $\uH^\ast_G(S^0; \uA)$ were computed by Stong for the group $C_2$ and by Lewis and Stong for the groups $C_p$ (\cite{Lew88}). The multiplicative structure was also computed by Lewis in the same paper. With constant coefficients $\underline{\Z/p}$, the computations of the cohomology of $C_p$-orbits were performed by Stong (Appendix of \cite{Car00}). In this paper, we carry the computations of $\uH^\ast_G(S^0; \uA)$ further to $G=C_{pq}$ where $p$ and $q$ are distinct odd primes. We are more interested in the additive structure of the cohomology Mackey functors and avoid computations of the ring structure. In the following, we describe our results in more detail.  Throughout we use the notation $\uH_G^\alpha(X;\uM)$ to denote the Mackey functor valued cohomology of $X$, and the notation $H_G^\alpha(X;\uM)$ to denote the associated cohomology group in the grading $\alpha \in RO(G)$. This means that $\alpha$ is a virtual representation so that we can make sense of $|\alpha^H|$, the dimension of the $H$-fixed points of $\alpha$. In the case $\uM=\uA$, we often drop the coefficient from the notation and simply write  $\uH^\alpha_{C_{pq}}(S^0)$.    
 
\subsection{Computations of $\uH^\ast_{C_{pq}}(S^0;\uA)$} 
The computations of Stong and Lewis for the group $C_p$ (Proposition \ref{Lewis orbits}) write $\uH^\alpha_{C_p}(S^0;\uA)$ as a function of $|\alpha|$ and $|\alpha^{C_p}|$. When either of these integers are non-zero, the Mackey functor $\uH^\alpha_{C_p}(S^0;\uA)$  is determined up to isomorphism by the two integers. When both $|\alpha|$ and $|\alpha^{C_p}|$ are $0$, the Mackey functor is of the form $A[d_\alpha]$ and so it depends on an additional integer $d_\alpha$. We investigate the Mackey functors $\uH^\ast_{C_{pq}}(S^0;\uA)$ along similar lines. We prove (Theorem \ref{ind})
\begin{thma}
{\it For every subgroup $H$ of $C_{pq}$, the groups $H^\alpha_{C_{pq}}(C_{pq}/H;\uA)$ depend only on the dimensions of the fixed points of $\alpha$.}   
\end{thma}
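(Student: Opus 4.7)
The plan is to reduce the cohomology of a general orbit to the cohomology of $S^0$ for a subgroup, via the restriction--induction adjunction
$$H^\alpha_{C_{pq}}(C_{pq}/H;\uA) \;\cong\; H^{\res_H \alpha}_H(S^0;\res_H\uA).$$
For $H=\{e\}$ the right-hand side is $\Z$ when $|\alpha|=0$ and zero otherwise, depending only on $|\alpha|$. For $H=C_p$ or $H=C_q$, the restricted virtual representation has dimensions $(|\alpha|,|\alpha^{H}|)$, both determined by the fixed-point dimensions of $\alpha$, and I would invoke the Lewis--Stong result (Proposition \ref{Lewis orbits}): the underlying group $H^\beta_{C_p}(S^0;\uA)$ is a function of the two fixed-point dimensions of $\beta$ (the additional invariant $d_\beta$ affects only the Mackey functor structure, not the value at $C_p/C_p$).

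The essential case is therefore $H=C_{pq}$. First I would classify the irreducible real $C_{pq}$-representations into four families: the trivial $1$-dimensional one, the $2$-dimensional representations factoring through $C_{pq}\!\to\!C_p$, those factoring through $C_{pq}\!\to\!C_q$, and the faithful $2$-dimensional ones. Writing $\alpha = a \cdot 1 + \sum_i b_i\mu_i + \sum_j c_j\nu_j + \sum_k d_k\lambda_k$, the equations
$$|\alpha^{C_{pq}}|=a, \quad |\alpha^{C_p}|=a+2\textstyle\sum c_j,\quad |\alpha^{C_q}|=a+2\sum b_i,\quad |\alpha|=a+2\sum(b_i+c_j+d_k)$$
show that the four fixed-point dimensions recover the total multiplicities $(a,\sum b_i,\sum c_j,\sum d_k)$. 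Thus Theorem A reduces to showing that $H^\alpha_{C_{pq}}(S^0;\uA)$ as a group is insensitive to which specific irreducibles are chosen within each family.

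For this last step I would proceed by induction on the total number of $2$-dimensional summands of $\alpha$, using for each irreducible $V$ the cofiber sequence $S(V)_+ \to S^0 \to S^V$, which produces a long exact sequence relating $\uH^*(S^\gamma)$, $\uH^*(S^{\gamma+V})$, and $\uH^*(S(V)_+ \wedge S^\gamma)$. The key input is that replacing $V$ by another irreducible $V'$ of the same family leaves the cohomology unchanged, which I would establish via automorphisms of $C_{pq}$: since every subgroup of $C_{pq}$ is characteristic, each $\phi \in \operatorname{Aut}(C_{pq})\cong(\Z/p)^\times\times(\Z/q)^\times$ fixes every orbit $C_{pq}/H$ up to isomorphism and preserves $\uA$, hence yields a natural identification $H^\alpha_{C_{pq}}(C_{pq}/H;\uA)\cong H^{\phi^*\alpha}_{C_{pq}}(C_{pq}/H;\uA)$, and the action of $\operatorname{Aut}(C_{pq})$ is transitive on each family of $2$-dimensional irreducibles. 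The main obstacle is that a single $\phi$ moves all summands of $\alpha$ simultaneously, so the automorphism argument cannot swap one irreducible in isolation. To get around this, I would compute $\uH^*(S(V)_+;\uA)$ explicitly for a representative $V$ in each of the three non-trivial families and show, family by family, that the answer is manifestly a function of $(|V|,|V^{C_p}|,|V^{C_q}|)$ alone; feeding this into the long exact sequence and inducting on the total $2$-dimensional dimension of $\alpha$ then propagates type-dependence to all of $RO(C_{pq})$.
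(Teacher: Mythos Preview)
Your treatment of $H \neq C_{pq}$ is correct and matches the paper (Propositions \ref{AG}, \ref{G/Cp}, \ref{Lewis orbits}); in particular your observation that the invariant $d_\beta$ does not affect the underlying group at $C_p/C_p$ is exactly right. For $H = C_{pq}$ your high-level plan---compute $\uH^\ast(S(V)_+)$ for a representative $V$ in each family and feed the result into the cofibre long exact sequence---is also the paper's.

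The gap is in your last sentence. Knowing that $\uH^\ast(S(\xi^j)_+) \cong \uH^\ast(S(\xi^{j'})_+)$ for $j, j'$ in the same family does \emph{not} imply that the long exact sequences for $S(\xi^j)_+ \to S^0 \to S^{\xi^j}$ and $S(\xi^{j'})_+ \to S^0 \to S^{\xi^{j'}}$ have isomorphic middle terms: you must also show that the boundary and restriction maps $\delta_{\xi^j}$, $\pi_{\xi^j}$ agree with $\delta_{\xi^{j'}}$, $\pi_{\xi^{j'}}$ under the outer isomorphisms, and nothing in your sketch supplies this. The automorphism trick, as you correctly note, moves all summands of $\alpha$ at once and so cannot pin these maps down either. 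Your proposed ``induction on the total $2$-dimensional dimension of $\alpha$'' is also not well-posed for virtual $\alpha$, and even for honest representations the inductive step yields only a five-term exact relation, not an isomorphism. The paper closes this gap not by a clean induction but by first computing enough of the maps $\delta$, $\pi$ (Propositions \ref{sus-e}, \ref{sus-p}) to establish that adding or subtracting $\xi^k$ leaves $\uH^\alpha$ unchanged once the relevant fixed-point dimensions are far from zero (Propositions \ref{shex-e}, \ref{shex-p}); this reduces the problem to finitely many sign patterns for $(|\alpha|,|\alpha^{C_p}|,|\alpha^{C_q}|,|\alpha^{C_{pq}}|)$, each of which is then settled by explicit computation (Proposition \ref{sign}, Propositions \ref{calc1odd}--\ref{calc2}, and the residual cases worked out inside the proof of Theorem \ref{ind} itself). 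Your outline is on the right track, but to turn it into a proof you must either carry out that case analysis or produce a separate argument comparing the connecting homomorphisms---and none is apparent.
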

The independence theorem above is extended to Mackey functor valued cohomology in many cases. That is, given the four integers $|\alpha|$, $|\alpha^{C_p}|$, $|\alpha^{C_q}|$ and $|\alpha^{C_{pq}}|$, in many cases we write down $\uH^\alpha_{C_{pq}}(S^0)$ as a function of these four integers. Similar to the $C_p$ case, we observe that the Mackey functors $\uH^\alpha_{C_{pq}}(S^0)$ depend only on the fixed points of $\alpha$ unless many of these fixed points are of dimension $0$. More precisely (Theorem \ref{indmack}) 
\begin{thmb}
{\it Suppose $\alpha \in RO(C_{pq})$ is such that at least one of $|\alpha^H|$ or $|\alpha^K|$ is non-zero whenever $K$ and $H$ are distinct subgroups of $C_{pq}$ with exactly one of them  $\in \{ e, C_{pq}\}$. Then, up to isomorphism, the Mackey functor $\uH^\alpha_{C_{pq}}(S^0;\uA)$ depends only on the dimension of the fixed points of $\alpha$.}
\end{thmb}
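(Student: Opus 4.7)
By Theorem A, the abelian groups $H^\alpha_{C_{pq}}(C_{pq}/H)$ for each $H \le C_{pq}$ are already known to depend only on the four integers $|\alpha|$, $|\alpha^{C_p}|$, $|\alpha^{C_q}|$, $|\alpha^{C_{pq}}|$. The additional content of Theorem B is that, under the stated hypothesis, the restriction maps $\res^K_H$ and transfer maps $\tr^K_H$ between these groups are themselves determined, up to a compatible isomorphism of Mackey functors, by the same four integers. The plan is to establish this by an induction on the dimensions of $\alpha$, using cofiber sequences of the form
\[
S(V)_+ \longrightarrow S^0 \longrightarrow S^V
\]
for the irreducible representations $V$ of $C_{pq}$. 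Smashing with $S^{\alpha - V}$ and applying $\uH^\ast(-;\uA)$ produces long exact sequences of Mackey functors relating $\uH^\alpha$, $\uH^{\alpha - V}$, and terms involving $S(V)$; since $S(V)$ is a finite $C_{pq}$-CW complex built from orbits of proper subgroups, the latter terms are controlled by earlier inductive steps, or by restriction to $C_p$ and $C_q$ where the Lewis--Stong computation applies.

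First I would catalogue the irreducible real representations of $C_{pq}$: the trivial representation together with the two-dimensional faithful characters of $C_p$, $C_q$, and $C_{pq}$. Every $\alpha \in RO(C_{pq})$ is an integer combination of these, so reducing $\alpha$ by a single irreducible at a time suffices to drive the induction. The base case $\alpha = 0$ gives $\uA$. At each inductive step, Theorem A fixes the underlying abelian groups appearing in the long exact sequence, and the Mackey-functor structure on the middle term is then constrained by the connecting maps together with the Mackey-functor structure on the outer terms.

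Second, I would use the hypothesis to rigidify the remaining extensions. For each adjacent pair $(K,H) \in \{(C_p,e),(C_q,e),(C_{pq},C_p),(C_{pq},C_q)\}$, the condition forces at least one of $\uH^\alpha(C_{pq}/K)$ or $\uH^\alpha(C_{pq}/H)$ to be of a shape (often zero, or free of controlled rank) that admits only one possible restriction or transfer into or out of the other factor, once the double-coset formulae between $C_p$, $C_q$ and $C_{pq}$ are imposed. This is the direct analogue of the role played in the $C_p$ case by the assumption that $|\alpha|$ and $|\alpha^{C_p}|$ are not simultaneously zero; the hypothesis in Theorem B is tailored precisely to exclude the configurations in which an extension parameter analogous to Lewis's integer $d_\alpha$ could appear.

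The main obstacle is the last step. Just as the Mackey functors $A[d_\alpha]$ show that dropping the analogous hypothesis in the $C_p$ case produces genuinely non-isomorphic Mackey functors, for $C_{pq}$ one must simultaneously rigidify four adjacent-pair restriction/transfer pairs and then verify that the Mackey-functor identities obtained by composing through $C_p$ and through $C_q$ agree. I would organise the verification as a case analysis on the signs of the four fixed-point dimensions, using the explicit computations of cohomology of $C_{pq}$-orbits that precede Theorem B in the paper to identify each $\res$ and $\tr$ map and to confirm, in every admissible case, that the Mackey functor is pinned down uniquely by the four integers $|\alpha|$, $|\alpha^{C_p}|$, $|\alpha^{C_q}|$, $|\alpha^{C_{pq}}|$.
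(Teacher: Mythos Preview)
Your overall structure---cofibre sequences $S(V)_+\to S^0\to S^V$, induction over the irreducible summands, and a case analysis on the signs of the four fixed-point dimensions---matches the paper's machinery. But the paper's proof of Theorem~B is not really a separate argument layered on top of Theorem~A: it is simply the observation that the explicit Mackey-functor formulas obtained in Propositions~\ref{sign}, \ref{calc1odd}--\ref{calc2b}, \ref{calc3}--\ref{calczero} exhaust all $\alpha$ satisfying the hypothesis, and each formula visibly depends only on the four integers. In other words, the paper computes $\uH^\alpha_{C_{pq}}(S^0)$ as a specific named Mackey functor (e.g.\ $\KK_p\bZp\oplus\KK_q\bZq$, $L_{pq}\oplus\bbZpq$, etc.) in every allowed case, and Theorem~B is then a one-line corollary.

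Your step~3 is where the proposal diverges and becomes shaky. You suggest that the hypothesis forces, for each adjacent pair $(K,H)$, one of the two groups to be ``often zero, or free of controlled rank'' so that the restriction/transfer is pinned down. That is not the mechanism the paper uses, and it is not clear it would succeed as stated: in many cases (e.g.\ $\uH^\alpha\cong\KK_p\bZp\oplus\KK_q\bZq$ or $L_{pq}\oplus\bbZpq$) neither adjacent group is zero or free. What the paper actually does to resolve the extension problems arising from the long exact sequences is invoke explicit $\Hom$ and $\Ext^1$ vanishing results in $\MM_{C_{pq}}$ (Propositions~\ref{Cpqmap}, \ref{Cpqext}, \ref{cpzp}): short exact sequences such as $0\to\bbZpq\to\uH^\alpha\to L_{pq}\to 0$ are shown to split because $\Ext^1_{\BB_{C_{pq}}}(L_{pq},\bbZpq)=0$, and boundary maps are shown to vanish because, e.g., any map $R_{pq}\to\bbA$ is zero. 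Your proposal would be correct if you replaced the vague rigidity heuristic of step~3 with these concrete algebraic inputs; your final paragraph gestures in that direction, but the real work is the catalogue of explicit computations, not an abstract uniqueness argument for the maps.
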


The proof of both the Theorems are done through explicit computations of the Mackey functors $\uH^\alpha_{C_{pq}}(S^0)$. We provide explicit formulas for every $\alpha$ as in Theorem B, listed in the tables \ref{odd-tab}, \ref{evpos-tab}, \ref{evneg-tab}, \ref{evzer-tab} below, keeping in mind the obvious symmetry between $p$ and $q$. The notations for the Mackey functors used are defined in Section \ref{mack}.

\begin{table}[ht]

\begin{tabular}{ |p{5.1cm}|p{2.5cm}||p{5.1cm}| p{1.3cm}|  }
 \hline
{\tiny $\alpha$ } & { \tiny $\uH_{C_{pq}}^\alpha(S^0)$} & {\tiny $\alpha$ } & {\tiny $\uH_{C_{pq}}^\alpha(S^0)$} \\
 \hline
{\tiny $|\alpha|>0,|\alpha^{C_{pq}}|\leq 1$}  & {\tiny $0$ } 
& {\tiny $|\alpha|>0,|\alpha^{C_p}|>0,|\alpha^{C_q}|>0,|\alpha^{C_{pq}}|> 1$}  &  {\tiny $0$ }  \\
\hline
{\tiny $|\alpha|>0,|\alpha^{C_p}|<0,|\alpha^{C_q}|<0,|\alpha^{C_{pq}}|> 1$}  &  {\tiny $\bbZpq$ }
& {\tiny $|\alpha|>0,|\alpha^{C_p}|<0,|\alpha^{C_q}|>0,|\alpha^{C_{pq}}|> 1$}  &  {\tiny $\bbZq$ }\\
\hline
{\tiny $|\alpha|<0,|\alpha^{C_p}|\leq 1,|\alpha^{C_q}|\leq 1,|\alpha^{C_{pq}}| \leq 1$}  &  {\tiny $0$ }
& {\tiny $|\alpha|<0,|\alpha^{C_p}|=1,|\alpha^{C_q}|=1,|\alpha^{C_{pq}}|> 1$}  &  {\tiny $0$ }\\
\hline
{\tiny $|\alpha|<0,|\alpha^{C_p}|<0,|\alpha^{C_q}|<0,|\alpha^{C_{pq}}| > 1$}  &  {\tiny $\bbZpq$ }
& {\tiny $|\alpha|<0,|\alpha^{C_p}|=1,|\alpha^{C_q}|<0,|\alpha^{C_{pq}}|> 1$}  &  {\tiny $\bbZp$ }\\
\hline
{\tiny $|\alpha|<0,|\alpha^{C_p}|>1,|\alpha^{C_q}|<0,|\alpha^{C_{pq}}| > 1$}  &  {\tiny $\KK_p \bZp\oplus \bbZp$ }
& {\tiny $|\alpha|<0,|\alpha^{C_p}|>1,|\alpha^{C_q}|=1$}  &  {\tiny $\KK_p \bZp$ }\\
 \hline
{\tiny $|\alpha|<0,|\alpha^{C_p}|>1,|\alpha^{C_q}|>1$}  &  {\tiny $\KK_p \bZp\oplus \KK_q \bZq$ }
& {\tiny $|\alpha|<0,|\alpha^{C_p}|>1,|\alpha^{C_q}|\leq 1, |\alpha^{C_{pq}}|\leq 1$}  &  {\tiny $\KK_p \bZp$ }\\
 \hline
\end{tabular}
\vspace{.2cm}
\caption{Formula for $\uH_{C_{pq}}^\alpha(S^0;\uA)$ for $|\alpha|$ odd.}
\label{odd-tab}
\end{table}

\begin{table}[ht]

\begin{tabular}{ |p{4.2cm}|p{2.35cm}||p{4.2cm}| p{3.2cm}|  }
 \hline
{\tiny $\alpha$ } & { \tiny $\uH_{C_{pq}}^\alpha(S^0)$} & {\tiny $\alpha$ } & {\tiny $\uH_{C_{pq}}^\alpha(S^0)$} \\
 \hline
{\tiny $|\alpha^{C_p}|>0,|\alpha^{C_q}|>0,|\alpha^{C_{pq}}|>0$}  & {\tiny $0$ } 
& {\tiny $|\alpha^{C_p}|>0,|\alpha^{C_q}|>0,|\alpha^{C_{pq}}|=0$}  &  {\tiny $\bbZ$ }  \\
\hline
{\tiny $|\alpha^{C_p}|>0,|\alpha^{C_q}|>0,|\alpha^{C_{pq}}|<0$}  &  {\tiny $\bbZpq$ }
& {\tiny $|\alpha^{C_p}|>0,|\alpha^{C_q}|<0,|\alpha^{C_{pq}}|> 0$}  &  {\tiny $\CC_q\bZq$ }\\
\hline
{\tiny $|\alpha^{C_p}|>0,|\alpha^{C_q}|<0,|\alpha^{C_{pq}}| <0$}  &  {\tiny $\CC_q \bZq \oplus \bbZp$ }
& {\tiny $|\alpha^{C_p}|>0, |\alpha^{C_q}|<0,|\alpha^{C_{pq}}|=0$}  &  {\tiny $\CC_q \bZq \oplus \bbZ$ }\\
\hline
{\tiny $|\alpha^{C_p}|>0,|\alpha^{C_q}|=0,|\alpha^{C_{pq}}| > 0$}  &  {\tiny $\CC_q\bZ$ }
& {\tiny $|\alpha^{C_p}|>0,|\alpha^{C_q}|=0,|\alpha^{C_{pq}}|<0$}  &  {\tiny $\bbZq \oplus \KK_q\bZ$ }\\
\hline
{\tiny $|\alpha^{C_p}|<0,|\alpha^{C_q}|<0,|\alpha^{C_{pq}}|\neq 0$}  &  {\tiny $\CC_p \bZp\oplus \CC_q\bZq$ }
& {\tiny $|\alpha^{C_p}|<0,|\alpha^{C_q}|<0,|\alpha^{C_{pq}}|=0$}  &  {\tiny $\bbZ\oplus \CC_p \bZp\oplus \CC_q\bZq$ }\\
 \hline
{\tiny $|\alpha^{C_p}|<0,|\alpha^{C_q}|=0, |\alpha^{C_{pq}}|>0$}  &  {\tiny $\CC_p \bZp\oplus \CC_q\bZ$ }
& {\tiny $|\alpha^{C_p}|<0,|\alpha^{C_q}|=0, |\alpha^{C_{pq}}|<0$}  &  {\tiny $\KK_q \bZ \oplus \CC_p\bZp$ }\\
\hline
{\tiny $|\alpha^{C_p}|=0,|\alpha^{C_q}|=0, |\alpha^{C_{pq}}|>0$}  &  {\tiny $\CC_p \bZ\oplus \CC_q\bZ$ }
& {\tiny $|\alpha^{C_p}|=0,|\alpha^{C_q}|=0, |\alpha^{C_{pq}}|<0$}  &  {\tiny $\KK_p \bZ \oplus \KK_q\bZ$ }\\
 \hline
\end{tabular}
\vspace{.2cm}
\caption{Formula for $\uH_{C_{pq}}^\alpha(S^0;\uA)$ for $|\alpha|>0$ even.}
\label{evpos-tab}
\end{table}

\begin{table}[ht]

\begin{tabular}{ |p{4.2cm}|p{2.35cm}||p{4.2cm}| p{2.35cm}|  }
 \hline
{\tiny $\alpha$ } & { \tiny $\uH_{C_{pq}}^\alpha(S^0)$} & {\tiny $\alpha$ } & {\tiny $\uH_{C_{pq}}^\alpha(S^0)$} \\
 \hline
{\tiny $|\alpha^{C_p}|< 0,|\alpha^{C_q}|<0,|\alpha^{C_{pq}}|\neq 0$}  & {\tiny $0$ } 
& {\tiny $|\alpha^{C_p}|\neq 0,|\alpha^{C_q}|\neq 0,|\alpha^{C_{pq}}|=0$}  &  {\tiny $\bbZ$ }  \\
\hline
{\tiny $|\alpha^{C_p}|\neq 0,|\alpha^{C_q}|>0,|\alpha^{C_{pq}}|>0$}  &  {\tiny $0$ }
& {\tiny $|\alpha^{C_p}|<0,|\alpha^{C_q}|>0,|\alpha^{C_{pq}}|< 0$}  &  {\tiny $\bbZp$ }\\
\hline
 {\tiny $|\alpha^{C_p}|\neq 0, |\alpha^{C_q}|=0,|\alpha^{C_{pq}}|>0$}  &  {\tiny $\CC_q \bZ$ }
&{\tiny $|\alpha^{C_p}|<0,|\alpha^{C_q}|=0,|\alpha^{C_{pq}}| < 0$}  &  {\tiny $\KK_q\bZ$ }\\
\hline
 {\tiny $|\alpha^{C_p}|>0,|\alpha^{C_q}|=0,|\alpha^{C_{pq}}|<0$}  &  {\tiny $\KK_q\bZ\oplus \bbZq$ }
&{\tiny $|\alpha^{C_p}|>0,|\alpha^{C_q}|>0,|\alpha^{C_{pq}}|< 0$}  &  {\tiny $\bbZpq$ }
\\
 \hline
{\tiny $|\alpha^{C_p}|=0,|\alpha^{C_q}|=0, |\alpha^{C_{pq}}|>0$}  &  {\tiny $\CC_p \bZ\oplus \CC_q\bZ$ }
& {\tiny $|\alpha^{C_p}|=0,|\alpha^{C_q}|=0, |\alpha^{C_{pq}}|<0$}  &  {\tiny $\KK_p \bZ \oplus \KK_q\bZ$ }\\
\hline
\end{tabular}
\vspace{.2cm}
\caption{Formula for $\uH_{C_{pq}}^\alpha(S^0;\uA)$ for $|\alpha|<0$ even.}
\label{evneg-tab}
\end{table}

\begin{table}[ht]

\begin{tabular}{ |p{4.2cm}|p{2.35cm}||p{4.2cm}| p{2.35cm}|  }
 \hline
{\tiny $\alpha$ } & { \tiny $\uH_{C_{pq}}^\alpha(S^0)$} & {\tiny $\alpha$ } & {\tiny $\uH_{C_{pq}}^\alpha(S^0)$} \\
 \hline
{\tiny $|\alpha^{C_p}|> 0,|\alpha^{C_q}|>0,|\alpha^{C_{pq}}|> 0$}  & {\tiny $L_{pq}$ } 
& {\tiny $|\alpha^{C_p}|> 0,|\alpha^{C_q}|> 0,|\alpha^{C_{pq}}|=0$}  &  {\tiny $L_{pq}\oplus \bbZ$ }  \\
\hline
{\tiny $|\alpha^{C_p}|> 0,|\alpha^{C_q}|>0,|\alpha^{C_{pq}}|<0$}  &  {\tiny $L_{pq} \oplus \bbZpq$ }
& {\tiny $|\alpha^{C_p}|>0,|\alpha^{C_q}|<0,|\alpha^{C_{pq}}|> 0$}  &  {\tiny $\KK_pL_p$ }\\
\hline
 {\tiny $|\alpha^{C_p}|> 0, |\alpha^{C_q}|<0,|\alpha^{C_{pq}}|=0$}  &  {\tiny $\KK_pL_p\oplus \bbZ$ }
&{\tiny $|\alpha^{C_p}|>0,|\alpha^{C_q}|<0,|\alpha^{C_{pq}}| < 0$}  &  {\tiny $\KK_pL_p\oplus \bbZq$ }\\
\hline
 {\tiny $|\alpha^{C_p}|<0,|\alpha^{C_q}|<0,|\alpha^{C_{pq}}|\neq 0$}  &  {\tiny $R_{pq}$ }
&{\tiny $|\alpha^{C_p}|<0,|\alpha^{C_q}|<0,|\alpha^{C_{pq}}|= 0$}  &  {\tiny $R_{pq}\oplus \bbZ$ }
\\
\hline
\end{tabular}
\vspace{.2cm}
\caption{Formula for $\uH_{C_{pq}}^\alpha(S^0;\uA)$ for $|\alpha|=0$.}
\label{evzer-tab}
\end{table}

Once the values for $\uH^\alpha_{C_{pq}}(S^0;\uA)$ are known for $\alpha$ as in Theorem B, one may compute the values for the cohomology groups $H^\alpha_{C_{pq}}(C_{pq}/H)$ for the other $\alpha$ using Theorem A, and Propositions \ref{shex-e}, \ref{shex-p}. We list the values of these groups in Table \ref{indval-tab}. It is important to note that for the values of $\alpha$ which are not as in Theorem B, the Mackey functor structure of $\uH^\alpha_{C_{pq}}(S^0;\uA)$ is not determined up to isomorphism by the values of the fixed points of $\alpha$. 

\begin{table}[ht]

\begin{tabular}{ |p{5.1cm}|p{2cm}|p{2cm}| p{2cm}|p{2cm}|  }
 \hline
{\tiny $\alpha$ } & { \tiny $H_{C_{pq}}^\alpha(C_{pq}/C_{pq})$} & {\tiny $H_{C_{pq}}^\alpha(C_{pq}/C_p)$ } & {\tiny $H_{C_{pq}}^\alpha(C_{pq}/C_q)$} & {\tiny $H_{C_{pq}}^\alpha(C_{pq}/e)$} \\
 \hline
{\tiny $|\alpha|=0,|\alpha^{C_p}|= 0,|\alpha^{C_q}|\neq 0,|\alpha^{C_{pq}}|> 0$}  & {\tiny $\Z^2$ } &{\tiny $\Z^2$} & {\tiny $\Z$} &{\tiny $\Z$} \\
 \hline
{\tiny $|\alpha|=0,|\alpha^{C_p}|= 0,|\alpha^{C_q}|>0,|\alpha^{C_{pq}}|< 0$}  & {\tiny $\Z^2\oplus \Z/p$ } &{\tiny $\Z^2$} & {\tiny $\Z$} &{\tiny $\Z$} \\
 \hline
{\tiny $|\alpha|=0,|\alpha^{C_p}|= 0,|\alpha^{C_q}|\neq 0,|\alpha^{C_{pq}}|= 0$}  & {\tiny $\Z^3$ } &{\tiny $\Z^2$} & {\tiny $\Z$} &{\tiny $\Z$} \\
 \hline
{\tiny $|\alpha|=0,|\alpha^{C_p}|= 0,|\alpha^{C_q}|=0,|\alpha^{C_{pq}}|= 0$}  & {\tiny $\Z^4$ } &{\tiny $\Z^2$} & {\tiny $\Z^2$} &{\tiny $\Z$} \\
 \hline
{\tiny $|\alpha|=0,|\alpha^{C_p}|= 0,|\alpha^{C_q}|=0,|\alpha^{C_{pq}}|\neq 0$}  & {\tiny $\Z^3$ } &{\tiny $\Z^2$} & {\tiny $\Z^2$} &{\tiny $\Z$} \\
 \hline
{\tiny $|\alpha|=0,|\alpha^{C_p}|= 0,|\alpha^{C_q}|<0,|\alpha^{C_{pq}}|< 0$}  & {\tiny $\Z^2$ } &{\tiny $\Z^2$} & {\tiny $\Z$} &{\tiny $\Z$} \\
 \hline
{\tiny $|\alpha|<0,|\alpha^{C_p}|= 0,|\alpha^{C_q}|\neq 0,|\alpha^{C_{pq}}|= 0$}  & {\tiny $\Z^2$ } &{\tiny $\Z$} & {\tiny $0$} &{\tiny $0$} \\
 \hline
{\tiny $|\alpha|>0,|\alpha^{C_p}|= 0,|\alpha^{C_q}|>0,|\alpha^{C_{pq}}|= 0$}  & {\tiny $\Z^2$ } &{\tiny $\Z$} & {\tiny $0$} &{\tiny $0$} \\
 \hline
{\tiny $|\alpha|>0,|\alpha^{C_p}|= 0,|\alpha^{C_q}|<0,|\alpha^{C_{pq}}|= 0$}  & {\tiny $\Z^2\oplus \Z/q$ } &{\tiny $\Z$} & {\tiny $\Z/q$} &{\tiny $0$} \\
\hline
\end{tabular}
\vspace{.2cm}
\caption{Values for $H_{C_{pq}}^\alpha(C_{pq}/H;\uA)$ which are not covered in Theorem B.}
\label{indval-tab}
\end{table}

The main techniques used in the computation are certain cofibre sequences associated to the complex 1-dimensional representations of $C_{pq}$. Given such a representation $V$, we have a cofibre sequence $S(V)_+ \to S^0 \to S^V$. Further, if $J$ is the kernel of the representation we may form a cofibre sequence $C_{pq}/J \to C_{pq}/J \to S(V)$. These induce long exact sequences on the $RO(C_{pq})$-graded cohomology which we compute in a similar way to the calculations in the Appendix of \cite{Car00}. We know the values of $H^\ast_{C_{pq}}(C_{pq}/J)$ from earlier computations of Lewis and Stong in \cite{Lew88}, and apply these to the latter cofibre sequence to compute $\uH^\ast_{C_{pq}}(S(V)_+)$. Then we apply the former cofibre sequence to deduce certain relations in $\uH^\ast_{C_{pq}}(S^0)$. Although these computations are written when both $p$ and $q$ are odd, the same computations hold if we allow $p=2$  where $\alpha$ satisfies the additional condition that it is the restriction of a complex representation. 

Using the computation of $\uH^\alpha_{C_{pq}}(S^0;\uA)$, we compute the cohomology with coefficients in the constant Mackey functor $R_{pq}$. This Mackey functor is also denoted by $\underline{\Z}$. We list their values in  Table \ref{odd-tabr} and \ref{ev-tabr} up to symmetry of $p$ and $q$ (see Theorem \ref{compr}). Curiously the values for $\uH_{C_{pq}}^\alpha(S^0;R_{pq})$ depend only on $|\alpha|, |\alpha^{C_p}|,$ and $|\alpha^{C_q}|$.  
\begin{table}[ht]

\begin{tabular}{ |p{4cm}|p{2.5cm}||p{4cm}| p{2.5cm}|  }
 \hline
{\tiny $\alpha$ } & { \tiny $\uH_{C_{pq}}^\alpha(S^0)$} & {\tiny $\alpha$ } & {\tiny $\uH_{C_{pq}}^\alpha(S^0)$} \\
 \hline
{\tiny $|\alpha|>0$}  & {\tiny $0$ } 
& {\tiny $|\alpha|<0,|\alpha^{C_p}|\leq 1,|\alpha^{C_q}|\leq 1$}  &  {\tiny $0$ }  \\
\hline
{\tiny $|\alpha|<0,|\alpha^{C_p}|\leq 1,|\alpha^{C_q}|>1$}  &  {\tiny $\KK_q\bZq$ }
& {\tiny $|\alpha|<0,|\alpha^{C_p}|>1,|\alpha^{C_q}|>1$}  &  {\tiny $\KK_p\bZp\oplus \KK_q\bZq$ }\\
\hline
\end{tabular}
\vspace{.2cm}
\caption{Formula for $\uH_{C_{pq}}^\alpha(S^0;R_{pq})$ for $|\alpha|$ odd.}
\label{odd-tabr}
\end{table}

\begin{table}[ht]

\begin{tabular}{ |p{4.2cm}|p{2.35cm}||p{4.2cm}| p{2.35cm}|  }
 \hline
{\tiny $\alpha$ } & { \tiny $\uH_{C_{pq}}^\alpha(S^0)$} & {\tiny $\alpha$ } & {\tiny $\uH_{C_{pq}}^\alpha(S^0)$} \\
 \hline
{\tiny $|\alpha|< 0$}  & {\tiny $0$ } 
& {\tiny $|\alpha|>0, |\alpha^{C_p}|> 0,|\alpha^{C_q}|> 0$}  &  {\tiny $0$ }  \\
\hline
{\tiny $|\alpha|>0, |\alpha^{C_p}|\leq 0,|\alpha^{C_q}|>0$}  &  {\tiny $\KK_p\bZp$ }
& {\tiny $|\alpha|>0, |\alpha^{C_p}|\leq 0,|\alpha^{C_q}|\leq 0$}  &  {\tiny $\KK_p\bZp\oplus \KK_q\bZq$ }\\
\hline
 {\tiny $|\alpha|=0, |\alpha^{C_p}|> 0, |\alpha^{C_q}|> 0$}  &  {\tiny $L_{pq}$ }
&{\tiny $|\alpha|=0, |\alpha^{C_p}|\leq 0,|\alpha^{C_q}|\leq 0$}  &  {\tiny $R_{pq}$ }\\
\hline
 {\tiny $|\alpha|=0, |\alpha^{C_p}|>0,|\alpha^{C_q}|\leq 0$}  &  {\tiny $\KK_pL_p$ }
&  & 
\\
 \hline
\end{tabular}
\vspace{.2cm}
\caption{Formula for $\uH_{C_{pq}}^\alpha(S^0;R_{pq})$ for $|\alpha|$ even.}
\label{ev-tabr}
\end{table}

\subsection{Freeness Theorems} 
As an application of computations of $\uH^\ast_{C_{pq}}(S^0)$, we deduce ``Freeness Theorems" for $C_{pq}$ spaces with even dimensional cells. What we mean by ``Freeness Theorem" is the equivariant generalization of the Theorem : {\it If $X$ is a CW complex with cells only in even dimensions, then the cohomology of $X$ is free module over $\tilde{H}^\ast(S^0;\Z)$.} 

In the equivariant case, the coefficients $\Z$ is replaced by the Burnside ring Mackey functor $\uA$. For the group $C_p$, a freeness theorem was proved in \cite{Lew88}, which carried the hypothesis that the cells are even and must be attached under some dimension restrictions. For finite CW complexes, the hypothesis of dimension restrictions  was removed in \cite{Fer99} and it was proved that any $C_p$-CW-complex with even cells have free cohomology (see also \cite{FL04}). In the former case, it was shown that under the hypothesis, the attaching map of a new even cell must induce the zero-map on cohomology, thus deducing that the cohomology is a direct sum with the summands being in one-to-one correspondence with the cells. This also happens in the case of the non-equivariant freeness theorem. However, in the latter case, the attaching maps may not induce the zero-map and so the freeness theorem works out with certain dimension shifts.     

In this paper, we prove freeness theorems for $C_{pq}$ along the lines of \cite{Lew88}, that is, for those cases, in which the attaching maps induce the zero-map on cohomology. More precisely we prove (see Theorem \ref{main} for a more detailed version) 
\begin{thmc}
{\it Suppose $X$ is a $C_{pq}$-CW-complex with only even cells satisfying the condition: if the cell $V$ is attached after $W$, then $W\ll V$. Then, the cohomology of $X$ is free with generators in one-to-one correspondence with the cells of $X$. }
\end{thmc}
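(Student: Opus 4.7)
The plan is to proceed by induction on the cells of $X$. Order the cells so that if cell $V$ is attached after $W$, then $W \ll V$; let $X_n$ denote the subcomplex after attaching the first $n$ cells, and suppose the $(n+1)$-th cell is attached via a map $S(V)_+ \wedge (C_{pq}/H)_+ \to X_n$ giving the cofibre sequence $X_n \to X_{n+1} \to S^V \wedge (C_{pq}/H)_+$. The resulting long exact sequence in $RO(C_{pq})$-graded Bredon cohomology with coefficients in $\uA$ is
\[ \cdots \to \uH^\alpha_{C_{pq}}(S^V \wedge (C_{pq}/H)_+) \to \uH^\alpha_{C_{pq}}(X_{n+1}) \to \uH^\alpha_{C_{pq}}(X_n) \xrightarrow{\delta} \uH^{\alpha+1}_{C_{pq}}(S^V \wedge (C_{pq}/H)_+) \to \cdots \]
By the inductive hypothesis, $\uH^\ast_{C_{pq}}(X_n)$ is free on generators in bijection with the cells $W_1,\ldots,W_n$ of $X_n$, each generator placed in degree $W_i$. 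To complete the induction it suffices to show (a) the connecting homomorphism $\delta$ vanishes, so that the long exact sequence breaks into short exact sequences, and (b) these short exact sequences split at the Mackey functor level.

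For step (a), $\delta$ is determined by its values on the free generators of $\uH^\ast_{C_{pq}}(X_n)$; a generator in degree $W_i$ is sent into
\[ \uH^{W_i+1}_{C_{pq}}(S^V \wedge (C_{pq}/H)_+) \;\cong\; \uH^{W_i - V + 1}_{C_{pq}}(C_{pq}/H). \]
The condition $W_i \ll V$ is precisely what forces the fixed-point dimensions of the virtual representation $W_i - V + 1$ into the range where the explicit formulas of Theorem A and Theorem B yield vanishing of this group for every subgroup $H \leq C_{pq}$. Consequently $\delta = 0$. For step (b), $\uH^\ast_{C_{pq}}(S^V \wedge (C_{pq}/H)_+)$ is a rank-one free module over $\uH^\ast_{C_{pq}}(S^0;\uA)$, shifted into degree $V$; a splitting amounts to lifting the fundamental class to $\uH^V_{C_{pq}}(X_{n+1})$, and the obstruction to lifting is again a connecting homomorphism of the same shape, which vanishes for exactly the same reason as in step (a). The two inputs combine to show $\uH^\ast_{C_{pq}}(X_{n+1})$ is free with one additional generator, in degree $V$, corresponding to the newly attached cell.

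The main obstacle is step (a): verifying vanishing \emph{as Mackey functors}, not merely as underlying abelian groups. This is precisely the reason Theorem B is formulated with its non-degeneracy hypothesis, and the ordering relation $\ll$ must be pinned down so that each $W_i - V + 1$ arising in the induction avoids the exceptional regime in which $\uH^\beta_{C_{pq}}(S^0)$ acquires an extra parameter $d_\beta$ and ceases to vanish. Once the definition of $\ll$ is adjusted so that Theorem B always applies, the verification reduces to a finite check against the explicit tables of $\uH^\beta_{C_{pq}}(C_{pq}/H)$ computed earlier. The secondary subtlety is bookkeeping: multiple cells of the same dimension attached simultaneously are handled by replacing $(C_{pq}/H)_+$ above with an appropriate wedge of orbits, after which the argument is identical.
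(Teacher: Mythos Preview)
Your overall strategy matches the paper's: induct on cells, show the connecting map $\delta$ in the long exact sequence vanishes, and split the resulting short exact sequence. However, there are two places where your argument is either incomplete or confused.

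\medskip

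\textbf{Step (a) is incomplete.} You write that a free generator of $\uH^\ast_{C_{pq}}(X_n)$ in degree $W_i$ is sent by $\delta$ into $\uH^{W_i-V+1}_{C_{pq}}(C_{pq}/H)$, and then appeal to Theorems A and B for vanishing. But the free generators of $\uH^\ast_{C_{pq}}(X_n)$ are not all copies of $\uH^\ast_{C_{pq}}(S^0)$: by the induction hypothesis they are of the form $\Sigma^{W_i}\uH^\ast_{C_{pq}}({C_{pq}/K_i}_+)$ for the various isotropy groups $K_i$ of the earlier cells. A $\uH^\ast_{C_{pq}}(S^0)$-module map
\[
\Sigma^{W_i}\uH^\ast_{C_{pq}}({C_{pq}/K_i}_+)\;\longrightarrow\;\Sigma^{V-1}\uH^\ast_{C_{pq}}({C_{pq}/H}_+)
\]
is classified not by an element of $\tilde H^{W_i-V+1}_{C_{pq}}({C_{pq}/H}_+)$ but by an element of $\uH^{W_i-V+1}_{C_{pq}}({C_{pq}/H}_+)(C_{pq}/K_i)$, and when $K_i$ or $H$ is a proper subgroup this requires a separate argument. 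The paper isolates this as Lemma~\ref{prop 1}, where the case $K_i=C_p$ (or $C_q$) is handled by passing through the adjunction $E_p\dashv\Phi_p^\ast$ to reduce to the known $C_p$-freeness theorem. Only after this reduction does one land in the situation $K_i=H=C_{pq}$, where the question becomes whether $\tilde H^{W+1-V}_{C_{pq}}(S^0)=0$; the paper then checks this directly against Propositions~\ref{sign}, \ref{calc1b}, \ref{calc2}, and \ref{calc3}, rather than invoking Theorems~A or~B (which are independence statements, not vanishing statements).

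\medskip

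\textbf{Step (b) is overcomplicated and the reasoning is garbled.} Once $\delta=0$ you have
\[
0 \to \Sigma^V\uH^\ast_{C_{pq}}({C_{pq}/H}_+) \to \uH^\ast_{C_{pq}}({X_{n+1}}_+) \to \uH^\ast_{C_{pq}}({X_n}_+) \to 0,
\]
and this splits simply because the right-hand term is free (hence projective) by the induction hypothesis. There is nothing to lift: the ``fundamental class'' of the left-hand term already maps \emph{into} the middle, and your talk of an ``obstruction to lifting'' being ``a connecting homomorphism of the same shape'' does not correspond to any actual obstruction theory here. The paper dispatches this in one line.
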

It must be noted from \cite[Remark 2.2]{Lew92} and \cite{FL04}, generalizations of the freeness theorem proved in \cite{Lew88} are not expected for groups like $C_{p^2}$ and $C_p\times C_p$, and thus, also for groups containing these as subgroups. Theorem C demonstrates that results of this type may be proved for other groups which do not contain $C_p\times C_p$ and $C_{p^2}$ for any prime $p$. As an application of the freeness theorem we prove that  the hypothesis is satisfied for certain projective spaces and Grassmannians (Theorem \ref{freeproj}, Theorem \ref{freegra}). 

\subsection{Organization of the paper}
We start by introducing certain preliminaries on $RO(G)$-graded cohomology in Section \ref{prelim}. In Section \ref{mack}, we introduce certain Mackey functors for the group $C_{pq}$  and make some calculations of the set of maps and Ext groups. In section \ref{sph-coh}, we compute the cohomology of spheres in the complex $1$-dimensional representations of $C_{pq}$. Using these we deduce relations in the cohomology of $S^0$, and as an application, we demonstrate some explicit calculations. These facts are used in Section \ref{comp1}, in which the first computations are made in indices whose fixed points are mostly non-zero. Once we have enough computations we prove our first independence Theorem (Theorem A) at the end of Section \ref{comp1}. In Section \ref{comp2}, we complete our computations  and prove the second independence theorem (Theorem B). In Section \ref{constcoeff}, we compute the cohomology for the constant Mackey functor. Finally in Section \ref{free}, we deduce the freeness theorem using the computations in Section \ref{comp1} and \ref{comp2}. 

\mbox{  } \\
{\bf Notation.} Throughout the document $p$ and $q$ denotes distinct odd primes. For the rest of the document $\GG$ denotes the group $C_{pq}$. We also use the notations 
$$\res_p := \res^{C_{pq}}_{C_p} ,~ \res_q := \res^{C_{pq}}_{C_q},~ \res^p := \res^{C_p}_{e},~\res^q := \res^{C_{q}}_{e},$$
$$\tau_p := tr^{C_{pq}}_{C_p},~\tau_q := tr^{C_{pq}}_{C_q},~\tau^p := tr^{C_{p}}_{e},~\tau^q := tr^{C_{q}}_{e}.$$

\section{Preliminaries}\label{prelim}

In this section, we recall certain definitions and notations from equivariant homotopy and cohomology theories. Most of the ideas in this section are from \cite{May96}. We always assume that $G$ is a finite group. 

\subsection{Equivariant homotopy and cohomology}
We work in the category of $G$-spaces. Some natural examples of $G$-spaces are 
$$D(V) :=\{ v \in V : ||v|| \leq 1\}$$ 
and  
$$S(V) :=\{ v \in V : ||v|| =1\}$$
for an orthogonal representation $V$ of $G$. We call these spaces representation-disks and representation-spheres respectively. We also use the notation $S^V$ to denote the one-point compactification of $V$ which is in turn homeomorphic to $S(V\oplus \epsilon)$, where $\epsilon$ is the trivial one-dimensional representation. 

Next, we come to the idea of a $G$-CW-complex. Usually, these are defined as spaces obtained by iteratively attaching cells of the type $G/H\times \DD^n$ along an attaching map out of $G/H\times \partial \DD^n$. For this paper, we use generalized cell complexes defined in \cite{FL04} using cells of the type $G\times_H D(W)$ where $W$ is a representation of the group $H$.  Cells of this type arise naturally in equivariant Morse theory (\cite{Was69}). 
\begin{defn}
A generalized $G$-cell complex $X$ is a $G$-space with a filtration $\{ X_n\}_{n \geq 0}$ of subspaces such that \\
a) $X_0$ is a finite $G$-set. \\
b) For each $n,$ $X_{n+1}$ is built up form $X_n$ by attaching cells of the form $G \times _K D(V)$ along the boundary $G \times _K S(V).$ Here $K$ is a subgroup of $G$, $V$ is a $K$-representation. \\
c) $X = \cup_{n \geq 0} X_n$ has the colimit topology.
\end{defn}

As an example, we note that the $G$-space $S^V$ is a generalized $G$-cell complex via the union $pt \cup G\times_G D(V)$. On the other hand, expressing this as a union of cells of the type $G/H\times \DD^n$ is far more involved. 

In the category of $G$-spaces, one does homotopy theory by defining the set of weak equivalences to be those equivariant maps which induce weak equivalences on all the fixed point spaces. The resulting homotopy theory becomes equivalent to the homotopy theory of diagrams from a category $\OO_G$ to the category of topological spaces. The category $\OO_G$ has $G$-orbits as objects and the $G$-equivariant maps between them as morphisms. 

A coefficient system $\uM$ is defined to be a contravariant functor from $\OO_G$ to Abelian groups. Given a contravariant functor from $\OO_G$ to spaces, taking chains yields such a coefficient system of chain complexes. A particular example is the singular chain complex functor $\underline{C}_\ast(X)$ given by $G/H \mapsto C_\ast (X^H)$ for any $G$-space $X$.  
\begin{defn}
Let $\uM$ be a coefficient system. Define 
$$C^\ast_G(X;\uM) := \Hom_{\OO_G}(\underline{C}_\ast(X),\uM)$$
as the singular co-chain complex with coefficients in $\uM$. The cohomology of this complex is denoted by $H^\ast_G(X;\uM)$ and is called the Bredon cohomology of $X$ with coefficients in $\uM$. 
\end{defn}
These are examples of equivariant ordinary cohomology theories which are graded by integers. However, equivariant cohomology is more naturally graded on $RO(G)$, the real representation ring of $G$, which is necessary to formulate equivariant Poincar\'e duality. To obtain this one has to consider certain special coefficient systems called Mackey functors.  

\subsection{Mackey functors} 
Mackey functors are defined as contravariant functors from the Burnside category (denoted $\BB_G$) to Abelian groups. There is an inclusion $\OO_G \to \BB_G$ which means that every Mackey functor comes with an underlying coefficient system. 

We first introduce an auxiliary category $\BB^{\prime}_G$. The objects  of $\BB^{\prime}_G$ are the finite $G-$sets and the morphisms between two $G$-sets $T$ and $S$ are the isomorphism classes of diagrams of the type
   $$\xymatrix@R-=.25cm@C-=.25cm{ &  &U \ar[dl] \ar[dr]  \\ &T  &  &S}.$$ 
Two diagrams $T \leftarrow U \to S$ and $T \leftarrow V \to S$ are isomorphic if there is a commutative diagram as follows:   
    $$\xymatrix@R-=.25cm@C-=.25cm{&  &U \ar[dl] \ar[dd]^{\cong} \ar[dr]  \\ &T  &  &S \\& &V \ar[ul] \ar[ur]}$$
  The composition of  morphisms is induced by the pullback. 

The morphism sets in $\BB'_G$ are commutative monoids under disjoint union. The Burnside category is obtained by taking the group completion as defined below   
\begin{defn}
The Burnside category $\BB_G$ is a category whose objects are finite $G-$sets and the morphisms between two object $S$ and $T$ is the group completion of $\BB'_G(S,T)$ with respect to disjoint union. 
\end{defn}

One can easily figure out a generating set of morphisms of the Burnside category. Let  $f:S\to T$ be a $G$-map. Then, we obtain two morphisms $f_!$ and $f$ in $\BB_G$ as depicted in the pictures
$$\xymatrix@R-=.25cm@C-=.25cm{ &  &S \ar[dl]_{=} \ar[dr]^{f}  \\ &S  &  &T}  \; \; \text{and} \; \; \xymatrix@R-=.25cm@C-=.25cm{ &  &S \ar[dl]_{f} \ar[dr]^{=}  \\ &T  &  &S}$$
These morphisms generate all the morphisms of the Burnside category by taking disjoint union and compositions. One defines a Mackey functor as an additive contravariant functor from Burnside category $\BB_G$ to the category of Abelian groups $\mathcal{A}b.$ An alternative definition of Mackey functors is obtained by laying out this information in terms of the generating morphisms of the Burnside category. This definition is due to Dress (\cite{Dre72}) and we follow the summary of the definition in \cite{GM95}.  
\begin{defn}
A Mackey functor consists of a pair $\uM = (\uM_\ast , \uM^\ast )$ of functors from the category of finite $G$-sets to $\mathcal{A} b$, with $\uM_\ast$ covariant and $\uM^\ast$ contravariant. On every object $S$, $\uM^\ast$ and $\uM_\ast$ have the same value which we denote by $\uM(S)$,  and $\uM$ carries disjoint unions to direct sums. The functors are required to satisfy that for every pullback diagram of finite $G$-sets as below 
$$\xymatrix{ P \ar[r]^\delta    \ar[d]^\gamma                             & X \ar[d]^\alpha \\ 
                        Y \ar[r]^\beta                                                     &  Z,}$$
one has $\uM^\ast(\alpha) \circ \uM_\ast(\beta) = \uM_\ast(\delta) \circ \uM^\ast(\gamma).$ 
\end{defn}

For subgroups $H\leq K \leq G$, we denote by $\pi^K_H$ the quotient map $\pi^K_H :  G/H \to G/K$. For a Mackey functor $\uM,$ we denote $\res^K_H := \uM^\ast(\pi^K_H)$ and call it the restriction map,   and  $tr^K_H : = \uM_\ast(\pi^K_H)$ and call it the transfer map. Let $^gH = gHg^{-1}$ and $H^g=g^{-1}Hg$. We also have the map $c_g : \uM(G/H) \to \uM(G/^gH)$ induced by left multiplication by $g$ from $G/H \to G/^gH$. Then the last condition above turns into the double-coset formula  
$$\res^K_J tr ^K_H = \sum_{x\in [J \setminus K /H]} tr^{J}_{J\cap^xH} c_x \res^H_{J^x\cap H}$$ 
for all subgroups $J,H \leq K.$

We denote the category of $G$-Mackey functors by $\MM_G$. The objects of this category are the Mackey functors defined above, and the morphisms are group homomorphisms which commute with all the restriction and transfer maps, and the maps $c_g$. 

Obvious examples of Mackey functors are the representable ones. These are also projective in the category of Mackey functors. The Mackey functor $\BB_G( -, G/G)$ is called the Burnside ring Mackey functor and denoted by $\uA$. For every subgroup $H$, $\uA(G/H)$ is the Burnside ring of $H$, which is a free Abelian group on the finite $H$-sets. The restriction maps are induced by restricting actions, and the transfer maps are given by induction. Analogously, we also denote the Mackey functor $\BB_G(-,S)$ by $\uA_S$ for any finite $G$-set $S$. A similar construction works for the representation ring Mackey functor whose value at $G/H$ is the representation ring of $H$, the restriction maps are the usual restriction of representations and the transfer maps are given by induction. 

In this paper, the main examples we consider are the groups $G=C_p$ the cyclic group of order $p$ for $p$ prime, and $\GG$  the cyclic group of order $pq$ for $p$ and $q$ distinct primes. Following Lewis \cite{Lew88}, we denote  a $C_p$-Mackey functor $\uN$ by the picture
$$\xymatrix{   & \uN(C_p/C_p)  \ar@/_.5pc/[dd]_{\res^p}  \\  
\uN :   \\ 
 & \uN(C_p/e) \ar@/_.5pc/[uu]_{\tau^{p}}}$$
so that the Burnside ring Mackey functor looks like 
$$\xymatrix{ \Z^2  \ar@/_.5pc/[d]_{[1\,p]}  \\ \Z \ar@/_.5pc/[u]_{ \left[\begin{smallmatrix} 0 \\ 1\end{smallmatrix}\right] }  & ,  }$$
and a $\GG$-Mackey functor $\uM$ by the picture 
$$\xymatrix{& & \uM(\GG/\GG) \ar@/_1pc/[dl]_{\res_p}  \ar@/^1pc/[dr]^{\res_q}  \\ 
\uM: &  \uM(\GG/C_p) \ar@/_1pc/[ur]^{\tau_p} \ar@/_1pc/[dr]_{\res^p} & &\uM(\GG/C_q) \ar@/^1pc/[ul]_{\tau_q} \ar@/^1pc/[dl]^{\res^q} \\
& &  \uM(\GG/e) \ar@/_1pc/[ul]^{\tau^p} \ar@/^1pc/[ur] _{\tau^q}. }$$

\subsection{$RO(G)$-graded cohomology} 
Let $RO(G)$ denote the real representation ring of $G$. Recall the usual notions of restriction and induction of representations. For $\alpha\in RO(G)$ and a subgroup $H$, we shall  denote the restriction of $\alpha$ in $RO(H)$ by $\res_H(\alpha)$ and often simply as $\alpha$. The trivial part of $\res_H(\alpha)$ is the fixed point space of $\alpha$ whose dimension is denoted by $|\alpha^H|$.  
\begin{defn}
An element $\alpha \in RO(G)$ is called even (respectively, odd) if $|\alpha^{H}|$ is even (respectively, odd) for all subgroups $H \leq G.$ 
\end{defn}
If  $G =\GG (= C_{pq})$ with both $p$ and $q$ distinct odd primes, then every element $\alpha\in RO(\GG)$ is either even or odd, and this is determined by whether $|\alpha|$ is even or odd. 

Equivariant cohomology theories are represented by $G$-spectra. The naive $G$-spectra are those in which only the desuspension with respect to trivial $G$-spheres are allowed. Usually, what we mean by $G$-spectra are those in which desuspension with respect to all representation spheres are allowed. In the viewpoint of \cite{LMS86}, naive $G$-spectra are indexed over a trivial $G$-universe, and $G$-spectra are indexed over a complete $G$-universe. As we are allowed to take desuspension with respect to representation-spheres, the associated cohomology theories become $RO(G)$-graded.    
\begin{defn}
 A $RO(G)$-graded cohomology theory consists of functors $E^\alpha$ for $\alpha\in RO(G)$, from reduced equivariant CW complexes to Abelian groups which satisfy the usual axioms - homotopy invariance, excision, long exact sequence and the wedge axiom. 
\end{defn}
It is interesting to note that the suspension isomorphism for $RO(G)$-graded cohomology theories takes the form 
$E^{\alpha}(X) \cong E^{\alpha + V}(S^V \wedge X)$ for every based $G$-space $X$ and representation $V$. 

Every $G$-set $S$ gives a suspension spectrum $\Sigma^\infty_G S_+$ in the category of $G$-spectra. It turns out that the category with finite $G$-sets as objects and homotopy classes of spectrum maps as morphisms is naturally isomorphic to the Burnside category. Thus, the homotopy groups of $G$-spectra are naturally Mackey functors, and  equivariant Eilenberg MacLane spectra must arise from Mackey functors. This is a Theorem of Lewis, May, and Mcclure which we refer from Chapter XIII of \cite{May96}.  Therefore, we may argue that the integer-graded cohomology associated to coefficient systems extend to $RO(G)$-graded cohomology theories if and only if the coefficient system has an underlying Mackey functor structure. 

We recall that there are change of groups functors on equivariant spectra. The restriction functor from $G$-spectra to $H$-spectra has a left adjoint given by smashing with $G/H_+$. This also induces an isomorphism for cohomology with Mackey functor coefficients 
$$\tilde{H}^\alpha_G(G/H_+\wedge X ; \uM)\cong \tilde{H}^\alpha_H(X; \res_H(\uM))$$
Suppose we use the Burnside ring Mackey functor so that $\res_H(\uA)=\uA$, we have 
$$\tilde{H}^\alpha_G(G/H_+\wedge X ; \uA)\cong \tilde{H}^\alpha_H(X; \uA)$$
In particular, for $H=e$, we have $\tilde{H}^\alpha(G_+\wedge X;\uA) \cong \tilde{H}^{|\alpha|}(X; \Z)$. 

The $RO(G)$-graded theories may also be assumed to be Mackey functor-valued as in the definition below.   
\begin{defn}
Let $X$ be a pointed $G$-space, $\uM$ be any Mackey functor, $\alpha \in RO(G)$. Then the Mackey functor valued cohomology $\uH^{\alpha}_{G}(X;\uM)$ is defined as 
$$\uH^{\alpha}_{G}(X;\uM)(G/K) = \tilde{H}^{\alpha}_{G}({G/K}_+ \wedge X;\uM).$$
The restriction and transfer maps are induced by the appropriate maps of $G$-spectra. 
\end{defn}

We note that our notation for the Mackey functor-valued cohomology uses pointed spaces. We follow this convention throughout so that the cohomology of $G$-points are of the form $\uH^\alpha_G(G/H_+;\uM)$. If $\uM=\uA$, we usually drop the coefficients in the expression. 

With $\uA$ coefficients, or more generally with coefficients in a Green functor (\cite{Lew80}), the Mackey functor-valued cohomology has an induced ring structure. Therefore, for a pointed $G$-space $X$, $\uH^\ast_G(X)$ is an $RO(G)$-graded module over $\uH^\ast_G(S^0)$. We are interested in proving freeness theorems, which study when the cohomology of a generalized cell complex with even cells has ``free" cohomology. This idea is defined below 
\begin{defn}
Let $X$ be a pointed $G$-space. The cohomology $\uH^{\ast}_G(X)$ is said to be free as $\uH^{\ast}_G(S^0)$-module if $\uH^{\ast}_G(X)$ is a direct sum of $\uH^\ast_G(S^0)$-modules of the type $\Sigma^\alpha \uH^{\ast}_G(S_+)$ for a finite $G$-set $S$ and $\alpha \in RO(G)$. 
\end{defn}

\section{Some Mackey functors of $\GG$}\label{mack}

In this section, we introduce certain $\GG$-Mackey functors that will arise as values of $\uH^\alpha_{\GG}(S^0)$. Following this, we make some computations of maps between such functors and Ext groups. 

We start by recalling some $C_p$-Mackey functors from \cite{Lew88}. In the diagrams below the left element denotes the value at $C_p/C_p$ and the right denotes the value at $C_p/e$. We often introduce the subscript $p$ as below to remind ourselves that these are $C_p$-Mackey functors, but later on we shall often drop it to avoid notational complexities. 
$$\xymatrix{    & \Z \oplus \Z \ar@/_.5pc/[dd]_{[\begin{smallmatrix} d & p \end{smallmatrix}]}   && & & \Z  \ar@/_.5pc/[dd]_{Id}    \\  
  \uA_p[d] :      &                                                                                                                              && & R_p :   \\ 
  & \Z \ar@/_.5pc/[uu]_{\left[ \substack{0 \\ 1}\right]}  &&& & \Z \ar@/_.5pc/[uu]_{p}}$$

$$\xymatrix{   & \Z  \ar@/_.5pc/[dd]_{p} &&&  & C  \ar@/_.5pc/[dd] \\ 
L_p :                                                     &&&& \bC_p :       \\
 & \Z \ar@/_.5pc/[uu]_{Id}                    &&&   & 0 \ar@/_.5pc/[uu]      }$$ 
where $C$ is an Abelian group. We note that $\uA_p[1]$ is the usual Burnside ring Mackey functor $\BB_{C_p}(-,C_p/C_p)$ which we denote by $\uA_p$. In addition we consider  $E_p= \BB_{C_p}(-,C_p/e)$ described by the diagram
$$\xymatrix{ & \Z \ar@/_.5pc/[dd]_{\Delta} \\   
E_p : \\ 
& \Z^{ p} \ar@/_.5pc/[uu]_{\nabla}}$$

For the group $\GG$, we draw below the three kinds of representable Mackey functors $\uA_S = \BB_\GG(-,S)$ for $S=\GG/\GG, \GG/C_p,$ and $\GG/e$. 

$$\xymatrix@=1.5em{ & \Z \oplus \Z \ar@/_1pc/[dl]_{\Delta}  \ar@/^1pc/[dr]^{{\tiny \left[ \begin{smallmatrix} 1 &p \end{smallmatrix}\right]}}  &  &&  & \Z \ar@/_1pc/[dl]_{\tiny \Delta}  \ar@/^1pc/[dr]^{\tiny \Delta} & \\
 (\Z \oplus \Z)^q  \ar@/_/[ur]|{\tiny \nabla} \ar@/_1pc/[dr]_{{\tiny \left[\begin{smallmatrix} 1 &p \end{smallmatrix}\right]^ q}} & & \Z \ar@/^/[ul]|{{\tiny \left[\begin{smallmatrix} 0 \\1 \end{smallmatrix}\right]}} \ar@/^1pc/[dl]^{\tiny \Delta} && \Z^q \ar@/_/[ur]|{\tiny \nabla} \ar@/_1pc/[dr]_{\tiny \Delta^p} & & \Z^p\ar@/^/[ul]|{\tiny \nabla} \ar@/^1pc/[dl]^{\tiny \Delta^q} \\ 
&  \Z^{q} \ar@/_/[ul]|(.4){{\tiny \left[\begin{smallmatrix} 0 \\1 \end{smallmatrix}\right]^{q}}} \ar@/^/[ur]|{\tiny \nabla}  &  && & \Z^{pq} \ar@/_/[ul]
|{\tiny \nabla^p} \ar@/^/[ur]|{\tiny \nabla^q}\\
& \uA_{\GG/C_p} &&&&  \uA_{\GG/e}}$$

\vspace{.5cm}

$$\xymatrix{ & &\Z^4 \ar@/_1.1pc/[dll]_{{\tiny \left[\begin{smallmatrix} 1 &0 &q &0 \\ 0 &1 &0 &q \end{smallmatrix}\right]}}   \ar@/^1.1pc/[drr]^{{\tiny \left[\begin{smallmatrix} 1 & p &0 &0 \\ 0 & 0 &1 &p \end{smallmatrix}\right]}}  & &\\ 
 \Z^2 \ar@/_.7pc/[urr]|{{\tiny \left[\begin{smallmatrix} 0 &0 \\0 &0 \\ 1 & 0\\ 0 &1  \end{smallmatrix}\right]}} \ar@/_1.1pc/[drr]_{{\tiny \left[\begin{smallmatrix} 1 & p \end{smallmatrix}\right]}} & & & &\Z^2 \ar@/^.7pc/[ull]|{{\tiny \left[\begin{smallmatrix} 0 &0 \\1 &0 \\ 0 & 0\\ 0 &1  \end{smallmatrix}\right]}} \ar@/^1.1pc/[dll]^{{\tiny \left[\begin{smallmatrix} 1 &q \end{smallmatrix}\right]}} \\ 
 &  &  \Z \ar@/_.7pc/[ull]|{{\tiny \left[\begin{smallmatrix} 0 \\ 1 \end{smallmatrix}\right]}} \ar@/^.7pc/[urr]|{{\tiny \left[\begin{smallmatrix} 0 \\ 1 \end{smallmatrix}\right]}}  & & \\ 
& &  \uA_{\GG/\GG} } $$

Now observe that the Burnside category  $\BB_{\GG}$ is isomorphic to  $\BB_{C_{p}}\otimes \BB_{C_q}$ formed as the product set of objects and tensor product set of morphisms. Thus we may define a $\GG$-Mackey functor by tensoring Mackey functors on $C_p$ and $C_q$. The following Mackey functors have special importance in our case.

\begin{defn}
For a $C_p$-Mackey functor $\uM$, define $\GG$-Mackey functors
$$\Ep\uM : = \uM \otimes E_q,  \,  \QQ_p\uM: = \uM \otimes \bZ_q,\, \cA_p\uM := \uM\otimes \uA_q$$
$$\CC_p\uM: = \uM \otimes L_q, \,  \KK_p\uM : = \uM \otimes R_q.$$
\end{defn}
Also we denote  
$$ R_{pq} := R_p \otimes R_q, \, L_{pq} := L_p \otimes L_q, \, \langle\langle C \rangle \rangle := \langle C \rangle _p \otimes \bZ_q  \cong \bZ_p\otimes \langle C \rangle_q. $$

There is a functor $\Phi_p : \BB_{C_p} \to \BB_{\GG}$ given by inducing up a $C_p$-set to $\GG$ (that is, $S\mapsto S\times \GG/C_p$). This induces $\Phi_p^\ast : \MM_{\GG} \to \MM_{C_p}$. We note the formula
\begin{equation} \label{res}
\Phi_p^\ast(\CC_p \uM) \cong \Phi_p^\ast(\KK_p\uM) \cong \uM,
\end{equation} 
and the result
\begin{prop}\label{epadj}
The functor $\Phi_p^\ast$ is right adjoint to the functor $\Ep$. 
\end{prop}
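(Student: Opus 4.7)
The plan is to exploit the identification $\BB_{C_{pq}} \cong \BB_{C_p}\otimes\BB_{C_q}$ noted above, under which a $C_{pq}$-Mackey functor $\uN$ is equivalently a bi-additive functor $(A,B)\mapsto \uN(A\boxprod B)$ on $\BB_{C_p}^{op}\times \BB_{C_q}^{op}$. In these terms $E_p\uM$ is the bi-functor $(A,B)\mapsto \uM(A)\otimes \FF_q(B)$, while $\Phi_p$ sends $A$ to $A\boxprod C_q/e$, so $(\Phi_p^\ast\uN)(A)=\uN(A\boxprod C_q/e)$. The adjunction will then follow by applying the Yoneda lemma in the $B$-variable.

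The starting observation is that for any abelian group $H$ and any $C_q$-Mackey functor $\underline{P}$,
$$\Hom_{\MM_{C_q}}(H\otimes \FF_q,\, \underline{P}) \;\cong\; \Hom_{\mathcal{A}b}(H,\,\underline{P}(C_q/e)).$$
This is immediate from the Yoneda identification $\Hom_{\MM_{C_q}}(\FF_q,\underline{P})\cong \underline{P}(C_q/e)$ by first checking it for $H$ free abelian (where $H\otimes\FF_q$ is a direct sum of copies of $\FF_q$) and then passing to a presentation of an arbitrary $H$.

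Applying this fibrewise in $A$, a morphism $\eta:E_p\uM\to\uN$ gives, for each $A\in\BB_{C_p}$, a map of $C_q$-Mackey functors $\uM(A)\otimes\FF_q\to \uN(A\boxprod -)$, which corresponds under the lemma to a homomorphism $\tilde\eta_A:\uM(A)\to \uN(A\boxprod C_q/e)=(\Phi_p^\ast\uN)(A)$. The family $\{\tilde\eta_A\}_A$ is automatically natural in $A$ and so assembles into a morphism $\tilde\eta:\uM\to\Phi_p^\ast\uN$ in $\MM_{C_p}$. Conversely, given $\tilde\eta$, one recovers $\eta_{A,B}$ on a pure tensor by $\eta_{A,B}(m\otimes f)=\uN(\mathrm{id}_A\boxprod f)(\tilde\eta_A(m))$, where $f\in \FF_q(B)=\BB_{C_q}(B,C_q/e)$; the two assignments are visibly inverse to one another.

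The main thing to check---and the only real obstacle---is naturality of this bijection with respect to the full Mackey structure, i.e., compatibility with all transfers, restrictions and conjugation maps on both sides. Since every morphism in $\BB_{C_{pq}}$ between box products is generated by morphisms of the form $\alpha\boxprod\beta$ with $\alpha\in\BB_{C_p}$ and $\beta\in\BB_{C_q}$, bifunctoriality of the tensor product reduces the verification to each variable separately, and in each variable it is exactly the naturality statement for the Yoneda identification used above.
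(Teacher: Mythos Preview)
Your proof is correct but proceeds differently from the paper's. The paper argues by writing down explicit unit and counit transformations: observing that $\Phi_p^\ast E_p\uM\cong\uM^q$, it takes $\eta$ to be the inclusion $i_1$ of the first summand, and defines the counit $\kappa:E_p\Phi_p^\ast\uN\to\uN$ at $C_{pq}/K$ as the transfer map $\uN(C_{pq}/C_p\times C_{pq}/K)\to\uN(C_{pq}/K)$, invoking the double-coset formula to see that $\kappa$ really is a map of Mackey functors, and then checks the triangle identities. Your argument instead uses the tensor decomposition $\BB_{C_{pq}}\cong\BB_{C_p}\otimes\BB_{C_q}$ to reduce everything to the enriched Yoneda lemma for the representable $\FF_q=\BB_{C_q}(-,C_q/e)$ in the $C_q$-variable, bypassing the explicit transfer/double-coset verification entirely. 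Your route is cleaner and more conceptual; the paper's route has the virtue of making the unit explicit as $i_1$, which is later used (in the proof of Lemma~\ref{prop 1}) to see that the unit is a $\uH^\ast_{C_p}(S^0)$-module map. It is worth noting that under your Yoneda correspondence the unit is exactly evaluation at $\mathrm{id}_{C_q/e}\in\FF_q(C_q/e)$, which recovers the paper's $i_1$, so the two descriptions match.
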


\begin{proof}
We first define the unit and the counit morphisms and note that the induced maps between $\MM_{\GG}(\Ep\uM,\uN)$ and $\MM_{C_p}(\uM, \Phi_p^\ast \uN)$ are inverse isomorphisms. Note that $\Phi_p^\ast \Ep \uM = \uM^q$ and define $\eta: I \to \Phi_p^\ast \Ep$ is given by $i_1$, the inclusion of the first factor. The composite $\Ep \Phi_p^\ast$ is given by 
$$\Ep \Phi_p^\ast(\uN) (\GG/K)= \uN (\GG/C_p \times \GG/K)$$ 
Define the transformation $\kappa : \Ep \Phi_p^\ast \to I$ as $\kappa(\uN)(\GG/K)=$ the transfer map  $ \uN (\GG/C_p \times \GG/K) \to \uN (\GG/K)$. It follows from the double-coset formula that $\kappa(\uN)$ is indeed a map of Mackey functors. It is easily checked that the induced maps   between $\MM_{\GG}(\Ep\uM,\uN)$ and $\MM_{C_p}(\uM, \Phi_p^\ast \uN)$ are inverse to each other. 
\end{proof}

A similar adjunction holds for the functor $\cA_p$. We define $\rho_p:\BB_{C_p} \to \BB_{\GG}$ by sending a $C_p$-set to the corresponding $\GG$-set with trivial $C_q$-action. We have the corresponding result 
\begin{prop}\label{apadj}
The functor $\rho_p^\ast$ is the right adjoint of $\cA_p$. 
\end{prop}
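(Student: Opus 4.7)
The plan is to mirror the proof of Proposition~\ref{epadj} by producing unit and counit natural transformations $\eta : I \to \rho_p^\ast \cA_p$ and $\kappa : \cA_p \rho_p^\ast \to I$, and checking that the resulting maps between $\MM_{C_{pq}}(\cA_p\uM,\uN)$ and $\MM_{C_p}(\uM,\rho_p^\ast\uN)$ are mutually inverse.

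Since $\rho_p(C_p/K_p) = C_{pq}/K_pC_q$, the definition of $\cA_p$ gives
\[
\rho_p^\ast \cA_p \uM(C_p/K_p) = \uM(C_p/K_p)\otimes \uA_q(C_q/C_q),
\]
and I would define $\eta(\uM)(C_p/K_p)(m) = m\otimes 1$, where $1 = [C_q/C_q]$ is the unit of the Burnside ring. This is the analogue of the first-factor inclusion $\eta = i_1$ appearing in Proposition~\ref{epadj}. For the counit, write $K=K_pK_q\leq C_{pq}$, so that
\[
\cA_p\rho_p^\ast\uN(C_{pq}/K)=\uN(C_{pq}/K_pC_q)\otimes \uA_q(C_q/K_q).
\]
Under the Yoneda identification $\uA_q(C_q/K_q)=\BB_{C_q}(C_q/K_q,C_q/C_q)$, every $\phi$ in the second factor produces a morphism $\mathrm{id}\otimes\phi : C_{pq}/K\to C_{pq}/K_pC_q$ in $\BB_{C_p}\otimes\BB_{C_q}\cong \BB_{C_{pq}}$, and hence a homomorphism $(\mathrm{id}\otimes\phi)^{\ast} : \uN(C_{pq}/K_pC_q)\to\uN(C_{pq}/K)$. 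I then set $\kappa(\uN)(C_{pq}/K)(n\otimes\phi)=(\mathrm{id}\otimes\phi)^{\ast}(n)$.

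The main substantive step is showing that $\kappa$ really is a map of Mackey functors; this plays the same role as the invocation of the double-coset formula in Proposition~\ref{epadj}. It follows from the fact that every morphism of $\BB_{C_{pq}}$ is a tensor of morphisms from $\BB_{C_p}$ and $\BB_{C_q}$, so compatibility of $\kappa$ with restrictions, transfers and conjugations in $\BB_{C_{pq}}$ reduces to the analogous compatibilities on each factor separately. Once this is granted, the two triangle identities are immediate from $\phi \circ 1 = \phi$ and $1 \circ 1 = 1$ in $\BB_{C_q}$, and the adjunction follows.
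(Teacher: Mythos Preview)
Your proof is correct and follows the same outline as the paper's: define unit and counit transformations and then verify the adjunction. Your unit $m\mapsto m\otimes 1$ is exactly the paper's first-factor inclusion $i_1$ (since $\uA_q(C_q/C_q)\cong\Z^2$ with $1=[C_q/C_q]$ as first basis vector). The only real difference is in the presentation of the counit: the paper writes down explicit level-by-level formulas (the map $[I\ qI]$ at the $C_{pq}/C_{pq}$ and $C_{pq}/C_q$ corners, and restriction maps at the other two), whereas you give a single uniform description via the Yoneda identification $\uA_q(C_q/K_q)=\BB_{C_q}(C_q/K_q,C_q/C_q)$, namely $\kappa(n\otimes\phi)=(\mathrm{id}\otimes\phi)^{\ast}(n)$. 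Your formulation has the advantage that both the Mackey-functor naturality of $\kappa$ and the triangle identities reduce immediately to contravariant functoriality of $\uN$ on $\BB_{C_{pq}}\cong\BB_{C_p}\otimes\BB_{C_q}$ together with the identity $1\circ\phi=\phi$ in $\BB_{C_q}$, so the ``routine check'' the paper alludes to is genuinely transparent in your setup.
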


\begin{proof}
Once again it suffices to define appropriate unit and counit maps and verify that they induce isomorphisms on Hom-sets. We note that $\rho_p^\ast \circ \cA_p$ sends $\uM$ to $\uM^2$ and define the unit $I \to \rho_p^\ast \circ \cA_p$ to be $i_1$, the inclusion on the first factor. For the counit, note that $\cA_p\circ \rho_p^\ast$ evaluated on a Mackey functor $\uM$ gives ($\iota_2$ standing for inclusion onto the second factor)
$$\xymatrix{ & \uM(\GG/\GG)^2  \ar@/_1pc/[dl]_{[I\,qI]}  \ar@/^1pc/[dr]  \\
  \uM(\GG/\GG) \ar@/_1pc/[dr] \ar@/_/[ur]_{\iota_2} & & \uM(\GG/C_q)^2 \ar@/^/[ul] \ar@/^1pc/[dl]^{[I\,qI]} \\
 &  \uM(\GG/C_q) \ar@/_/[ul] \ar@/^/[ur]^{\iota_2} }$$ 
Now define the map $\cA_p\circ \rho_p^\ast \to I$ as $[I\, qI]$ on the top and the right corners and the appropriate restriction maps at the other two corners. A routine check verifies that these maps induce inverse isomorphisms on Hom-sets.  
\end{proof}

We use the Propositions \ref{epadj} and \ref{apadj} to extend some results from $C_p$-Mackey functors to $\GG$-Mackey functors. The Mackey functors of interest are $\CC_p\uM$ and $\KK_p\uM$ where $\uM$ is one of $R_p$, $L_p$ or $\bC$. The Proposition below enlists their key properties as $C_p$-Mackey functors. Our strategy for the proof involves mapping the free Mackey functors $\uA_p$ and $E_p$  onto these. 
\begin{prop}\label{Cpmack} 
a) For all groups $C$, $\MM_{C_p}(L_p, \bC)=0$ and $\MM_{C_p}(R_p,\bC)=\{a\in C \mid pa=0\}$.\\
b) For all groups $C$,  
$$\Ext^1_{\BB_{C_p}}(\uM,\bC)=\begin{cases} 0 &\mbox{if}~\uM=\bZ~\mbox{or}~L_p \\ 
                                                                               C/pC &\mbox{if}~\uM=R_p  \end{cases} $$
c) $\Ext^1_{\BB_{C_p}}(\bZ,L_p)\cong \Z/p$.
\end{prop}

\begin{proof} 
The first statement of a) is clear from the fact that the transfer map in $L_p$ is an isomorphism, and that of $\bC$ is $0$. For $R_p$, note that $p\in R_p(C_p/C_p)$ is in the image of the transfer map, and the result follows from this observation. For b) we consider the three cases separately. Start with $\bZ$ and notice that the map $\uA_p\to \bZ$ induced by the generator of $\Z$ is surjective with kernel $L_p$. Therefore, we have the short exact sequence 
$$ 0 \to L_p \to \uA_p \to \bZ \to 0$$
leading to the long exact sequence
$$\cdots \to \MM_{C_p}(L_p,\bC) \to \Ext^1_{\BB_{C_p}}(\bZ,\bC) \to \Ext^1_{\BB_{C_p}}(\uA_p, \bC) \to \cdots $$
From a) we have $\MM_{C_p}(L_p,\bC)=0$ and $\Ext^1_{\BB_{C_p}}(\uA_p,\bC)=0$ as $\uA_p$ is projective as a Mackey functor. Thus, $ \Ext^1_{\BB_{C_p}}(\bZ,\bC)=0$. For $L_p$, we note the map $E_p \to L_p$ induced by $1\in \Z=L_p(C_p/e)$ is surjective and we denote the kernel by $\kappa_p$. Thus, $\kappa_p(C_p/C_p)=0$ and $\kappa_p(C_p/e) = \Z^{p-1}$. Therefore, we have the exact sequence
$$\cdots \to \MM_{C_p}(\kappa_p,\bC) \to \Ext^1_{\BB_{C_p}}(L_p,\bC) \to \Ext^1_{\BB_{C_p}}(E_p, \bC) \to \cdots $$
The Mackey functor $E_p$ is projective as it is representable, hence $\Ext^1_{\BB_{C_p}}(E_p, \bC)=0$. We also have $\MM_{C_p}(\kappa_p,\bC)=0$ as for a map $\kappa_p \to \bC$ at each level either the domain is $0$ or the range is $0$. Thus, $\Ext^1_{\BB_{C_p}}(L_p,\bC)=0$. This leaves the case  $\uM= R_p$ for which we use the short exact sequence 
$$0 \to \bZ \to \uA_p \to R_p \to 0$$ 
induced by $1 \in \Z= R_p(C_p/C_p)$. The associated long exact sequence takes the form 
$$\cdots \to \MM_{C_p}(\uA_p,\bC) \to \MM_{C_p}(\bZ,\bC) \to \Ext^1_{\BB_{C_p}}(R_p,\bC) \to 0$$
 Note that both $\MM_{C_p}(\uA_p,\bC)$ and $\MM_{C_p}(\bZ,\bC)$ are $C$ and the map in the above sequence is multiplication by $p$.  Therefore, $ \Ext^1_{\BB_{C_p}}(R_p,\bC)\cong C/pC$.

Finally, for c), we use the short exact sequence $0\to L_p \to \uA_p \to \bZ\to 0$ and obtain the exact sequence 
$$  \MM_{C_p}(\uA_p,L_p)\to \MM_{C_p}(L_p,L_p) \to \Ext^1_{\BB_{C_p}}(\bZ,L_p) \to \Ext^1_{\BB_{C_p}}(\uA_p,L_p) $$
The last term is $0$ as $\uA_p$ is projective. The first term is isomorphic to $L_p(C_p/C_p)\cong \Z$. We note that $\MM_{C_p}(L_p,L_p)\cong \Hom(L_p(C_p/e), L(C_p/e))\cong \Z$ as the transfer maps are isomorphisms. We note that a map $\uA_p \to L_p$ which is induced by $d\in \Z=L_p(C_p/C_p)$ induces the map $pd : \uA_p(C_p/e) \to L_p(C_p/e)$. Therefore $\Ext^1_{\BB_{C_p}}(\bZ,L_p)$ which equals the cokernel of $ \MM_{C_p}(\uA_p,L_p)\to \MM_{C_p}(L_p,L_p) $ is $\Z/p$.
\end{proof}

The next result deals with maps between some $\GG$-Mackey functors.
\begin{prop}\label{Cpqmap}
a) Any map of Mackey functors from $\CC_p\uM$ to $\uN$ is $0$ if $\uN(\GG/C_p)$  and $\uN(\GG/e)$ is $0$. \\
b) A map $L_{pq} \to \uN$ is $0$ if $\uN(\GG/e) =0$. \\
c) Any map from $R_{pq}$ to $\bbC$ is $0$ for any $C$. 
\end{prop}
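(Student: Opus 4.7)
All three parts rest on a single principle: a morphism of Mackey functors commutes with transfers, so if every element of the source at level $C_{pq}/H$ is the image under transfer of elements at levels where the target vanishes, the morphism must vanish at $C_{pq}/H$ as well. The three cases differ only in how efficiently transfers in the source propagate values upward from the bottom.

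For (a), the crucial fact is that the transfer $L_q(C_q/e) \to L_q(C_q/C_q)$ in $L_q$ is the identity. Using that $\BB_{C_{pq}} \cong \BB_{C_p} \otimes \BB_{C_q}$, transfers in $\CC_p\uM = \uM \otimes L_q$ decompose as tensors of the factorwise transfers, so both
$$(\CC_p\uM)(C_{pq}/C_p) \to (\CC_p\uM)(C_{pq}/C_{pq}) \quad\text{and}\quad (\CC_p\uM)(C_{pq}/e) \to (\CC_p\uM)(C_{pq}/C_q)$$
are identity maps. Given $\uN(C_{pq}/C_p) = \uN(C_{pq}/e) = 0$, naturality of the map $\phi : \CC_p\uM \to \uN$ with respect to these two transfers forces $\phi$ to vanish at the remaining two levels $C_{pq}/C_{pq}$ and $C_{pq}/C_q$.

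Part (b) proceeds the same way in every direction at once: both $L_p$ and $L_q$ have identity transfers, so every transfer in $L_{pq} = L_p \otimes L_q$ is an identity. Therefore every element at any level of $L_{pq}$ is the iterated transfer of the corresponding element of $L_{pq}(C_{pq}/e)$, on which $\phi$ vanishes by hypothesis.

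For (c), the target $\bbA$ is zero at $C_{pq}/C_p$, $C_{pq}/C_q$, and $C_{pq}/e$, so $\phi$ is automatically zero at those three levels and it only remains to treat the top component $\phi : \Z \to \bbA(C_{pq}/C_{pq})$. The transfers into $R_{pq}(C_{pq}/C_{pq})$ from $R_{pq}(C_{pq}/C_p)$ and $R_{pq}(C_{pq}/C_q)$ are multiplications by $q$ and $p$ respectively (the factorwise transfers in $R_q$ and $R_p$), while the analogous transfers in $\bbA$ start from zero groups. Naturality thus gives $p\cdot\phi(1) = q\cdot\phi(1) = 0$, and since $\gcd(p,q) = 1$, Bezout's identity forces $\phi(1) = 0$. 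I do not foresee any serious obstacle; the only care needed is in translating the factorwise transfer formula correctly, and this is dictated by the tensor decomposition of the Burnside category.
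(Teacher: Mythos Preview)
Your proof is correct. Parts (b) and (c) match the paper's argument essentially verbatim: the paper also appeals to the fact that all transfers in $L_{pq}$ are isomorphisms for (b), and uses $\tr^{C_{pq}}_{C_p}\res^{C_{pq}}_{C_p}(1)=q$, $\tr^{C_{pq}}_{C_q}\res^{C_{pq}}_{C_q}(1)=p$ together with $\gcd(p,q)=1$ for (c).

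For part (a) you take a different, more elementary route. The paper tensors the short exact sequence $0\to\kappa_q\to\FF_q\to L_q\to 0$ with $\uM$ to obtain $0\to\uM\otimes\kappa_q\to E_p\uM\to\CC_p\uM\to 0$, and then uses the adjunction $\MM_{C_{pq}}(E_p\uM,\uN)\cong\MM_{C_p}(\uM,\Phi_p^\ast\uN)$ (Proposition~\ref{epadj}); since the hypothesis on $\uN$ says exactly that $\Phi_p^\ast\uN=0$, the Hom group vanishes. You instead observe directly that because $\tr^{C_q}_e$ in $L_q$ is the identity, the two ``$C_q$-direction'' transfers in $\CC_p\uM=\uM\otimes L_q$ are isomorphisms, so the values at $C_{pq}/C_{pq}$ and $C_{pq}/C_q$ are transfers of elements from levels where $\uN$ vanishes. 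Your argument is shorter and avoids the adjunction; the paper's approach has the advantage that the same exact sequence and adjunction are reused immediately afterward to compute the Ext groups in Proposition~\ref{Cpqext}.
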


\begin{proof}
In the short exact sequence $0 \to \kappa_q \to E_q \to L_q$, the Mackey functors are levelwise free, hence, this  induces a short exact sequence of $\GG$-Mackey functors 
$$0 \to \uM\otimes \kappa_q \to \Ep \uM \to \CC_p \uM \to 0.$$
Thus, $\MM_{\GG}(\CC_p\uM,\uN)$ injects into $\MM_{\GG}(\Ep\uM,\uN) \cong \MM_{C_p}(\uM, \Phi_p^\ast \uN)$. Now the given condition in a) implies $\Phi_p^\ast\uN=0$, and so, a) follows. The statement b) follows from the fact that the transfer maps in $L_{pq}$ are isomorphisms, and so, every element in each level is obtained by transfer on some class in $L_{pq}(\GG/e)$. Finally for the statement c), note that the image $a \in C$  of $1\in \Z = R_{pq} (\GG /\GG)$ must satisfy both $pa = 0$ and $qa= 0$ since $\tau_p (\res_p(1)) = q$ and $\tau_q (\res_q(1))= p$. This implies that $a=0$, as $p$ and $q$ are relatively prime.  
\end{proof}

We follow this with some computations of Ext groups. In the later sections, we will use these to deduce that certain short exact sequences are split exact. 
\begin{prop}\label{Cpqext}
a) $\Ext^1_{\BB_{\GG}}(\CC_p\uM,\uN)=0$ if $\uN(\GG/C_p)$ and $\uN(\GG/e)$ are $0$. \\
b) For all groups $C$, 
$$\Ext^1_{\BB_{\GG}}(\KK_p\uM, \bbC)=\begin{cases} 0             &\mbox{if}~\uM=R_p~\mbox{or}~L_p \\
                                                                                                     C/qC       &\mbox{if}~\uM=\bZ \end{cases} $$
c) $\Ext^1_{\BB_{\GG}}(\bbZ,L_{pq})=0$. 
\end{prop}

\begin{proof}
We observe that both the functors $\Ep$ and $\cA_p$ preserve exact sequences and take the representables $\uA_p$ and $E_p$ to representables. Thus, we deduce 
\begin{equation}\label{ident}
\Ext^n_{\BB_{\GG}}(\Ep\uM, \uN) \cong \Ext^n_{\BB_{C_p}}(\uM,\Phi_p^\ast \uN), \, \Ext^n_{\BB_{\GG}}(\cA_p\uM, \uN) \cong \Ext^n_{\BB_{C_p}}(\uM,\rho_p^\ast \uN). 
\end{equation}

For a), we use the exact sequence 
$$0 \to \uM\otimes \kappa_q \to \Ep \uM \to \CC_p \uM \to 0$$
which induces the long exact sequence 
$$\cdots \to \MM_{\GG}(\kappa_q\otimes \uM,\uN) \to \Ext^1_{\BB_{\GG}}(\CC_p\uM,\uN) \to \Ext^1_{\BB_{\GG}}(\Ep\uM, \uN) \to \cdots $$
We observe that $ \MM_{\GG}(\kappa_q\otimes \uM,\uN)$ is $0$ as levelwise either the domain or the range is $0$ from the given conditions. We also have from \eqref{ident} that $\Ext^1_{\BB_{\GG}}(\Ep\uM, \uN) \cong \Ext^1_{\BB_{C_p}}(\uM,\Phi_p^\ast \uN)$ which is $0$ by the given hypothesis. Thus, we obtain a).  

For b), we note that the exact sequence $0\to \bZ \to \uA_q \to R_q$ induces the exact sequence 
$$0 \to \QQ_p\uM \to \cA_p \uM \to \KK_p \uM \to 0$$
and hence the long exact sequence
$$  \MM_{\GG}(\cA_p \uM,\bbC)\to \MM_{\GG}(\QQ_p \uM,\bbC) \to \Ext^1_{\BB_{\GG}}(\KK_p\uM,\bbC) \to \Ext^1_{\BB_{\GG}}(\cA_p\uM, \bbC)  $$
We note that $\MM_{\GG}(\QQ_p\uM, \QQ_p\uN)\cong \MM_{C_p}(\uM,\uN)$, $\bbZ \cong \QQ_p\bZ$, and $\rho_p^\ast \bbC =\bC$. Therefore, we deduce 
 $$\MM_{\GG}(\QQ_p\uM,\bbC)\cong \MM_{C_p}(\uM,\bC), ~ \, \MM_{\GG}(\cA_p \uM, \bbC) \cong \MM_{C_p}(\uM, \bC).$$ 
The map $\QQ_p \uM \to \cA_p \uM$ induces multiplication by $q$ on $\MM_{C_p}(\uM,\bC)$ after applying the adjunctions above. We proceed in the three cases $\uM=\bZ, L_p$ and $R_p$ separately. For $\bZ$ and $L_p$, we have from \eqref{ident} that $\Ext^1_{\BB_{\GG}}(\cA_p\uM, \bbC)  \cong \Ext^1_{\BB_{C_p}}(\uM, \bC)$ which is $0$ by Proposition \ref{Cpmack} b).  Now the result for $L_p$ and $\bZ$ follows from $\MM_{C_p}(L_p,\bC)=0$ and $\MM_{C_p}(\bZ,\bC)=C$. 

For $R_p$, note that $\MM_{C_p}(R_p,\bC)$ is $p$-torsion, so that multiplication by $q$ is an isomorphism. Therefore, the map $\Ext^1_{\BB_{\GG}}(\KK_p R_p,\bbC) \to \Ext^1_{\BB_{\GG}}(\cA_pR_p, \bbC) $ is injective and the latter group can be computed using \eqref{ident}. We have  $\Ext^1_{\BB_{\GG}}(\cA_pR_p, \bbC)  \cong \Ext^1_{\BB_{C_p}}(R_p, \bC)\cong C/pC$ by Proposition \ref{Cpmack} b). We also have $\KK_pR_p\cong R_{pq} \cong \KK_qR_q$, so the same argument implies that $\Ext^1_{\BB_{\GG}}(\KK_pR_p, \bbC)$ also injects into $C/qC$. This implies $\Ext^1_{\BB_{\GG}}(\KK_pR_p,\bbC)=0$.  

Finally, we prove c). We have the short exact sequence 
$$0\to \CC_p \bZ \to \cA_p\bZ \to \bbZ \to 0$$
 which induces a long exact sequence 
$$\cdots \to \MM_{\GG}(\CC_p \bZ,L_{pq}) \to \Ext^1_{\BB_{\GG}}(\bbZ,L_{pq}) \to \Ext^1_{\BB_{\GG}}(\cA_p\bZ, L_{pq}) \to \cdots $$
We note that $\MM_{\GG}(\CC_p \bZ,L_{pq})=0$ as the restriction maps in $L_{pq}$ are injective and $\CC_p\bZ(\GG/e)=0$. We also have 
$$\Ext^1_{\BB_{\GG}}(\cA_p\bZ, L_{pq}) \cong \Ext^1_{\BB_{C_p}}(\bZ, \rho_p^\ast L_{pq})\cong \Ext^1_{\BB_{C_p}}(\bZ, L_p)$$
which is $\Z/p$ by Proposition \ref{Cpmack}. Hence $ \Ext^1_{\BB_{\GG}}(\bbZ,L_{pq})$ injects into $\Z/p$. Analogously using the short exact sequence 
$$0\to \CC_q \bZ \to \cA_q\bZ \to \bbZ \to 0,$$
 we observe that  $ \Ext^1_{\BB_{\GG}}(\bbZ,L_{pq})$ injects into $\Z/q$. Therefore,  $ \Ext^1_{\BB_{\GG}}(\bbZ,L_{pq})=0$. 
\end{proof}

We end the section with a crucial observation about the Mackey functors $\CC_p\bZp$ and $\KK_p\bZp$.  We note that as Mackey functors there is an isomorphism $\CC_p\bZp \cong \KK_p\bZp$. Moreover, if we have a map of Mackey functors 
$$\phi: \CC_p\bZp \to \uM$$ 
with the property that the map $\CC_p\bZp(\GG/C_p)\to \uM(\GG/C_p)$ is a split injection, then the map $\phi$ is a split inclusion. Suppose $\tau:\uM(\GG/C_p)\to \Z/p$ is the splitting. We may define a map $\psi:\uM \to \CC_p\bZp$ by $\psi(\GG/C_p)=\tau$,
$$\psi(\GG/\GG)= q^{-1}\tau_p(\CC_p\bZp) \circ \psi(\GG/C_p)\circ \res_p(\uM),$$ and $\psi=0$ at other levels. Similarly, any map of Mackey functors $\psi:\uM \to \KK_p\bZp$
which is a split surjection at level $\GG/C_p$ is a split surjection. We summarize the result in the Proposition below 
\begin{prop}\label{cpzp}
a) A map of Mackey functors $\CC_p\bZp \to \uM$ which is a split injection at level $\GG/C_p$ is a split injection of Mackey functors. \\
b) A map of Mackey functors $ \uM\to \KK_p\bZp$ which is a split surjection at level $\GG/C_p$ is a split surjection of Mackey functors. \\
\end{prop}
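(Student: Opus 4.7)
My plan is to construct the claimed splittings by executing the recipe sketched immediately before the statement, reducing the verification to a single application of the double coset formula. The structural inputs I will use are: both $\CC_p\bZp$ and $\KK_p\bZp$ vanish at $C_{pq}/C_q$ and $C_{pq}/e$, take the value $\Z/p$ at $C_{pq}/C_{pq}$ and $C_{pq}/C_p$ with trivial Weyl $C_q$-action on the latter; the pair $(tr^{C_{pq}}_{C_p}, res^{C_{pq}}_{C_p})$ is $(\mathrm{id},q)$ for $\CC_p\bZp$ and $(q,\mathrm{id})$ for $\KK_p\bZp$; and $q$ is a unit in $\Z/p$, so the non-identity map in each pair is also an isomorphism. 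Finally, the double coset formula on the $C_{pq}/C_p$-level of any $C_{pq}$-Mackey functor $\uM$ gives $res^{C_{pq}}_{C_p}\circ tr^{C_{pq}}_{C_p}=\sum_{g\in C_q} c_g$.

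For part (a), let $\phi:\CC_p\bZp \to \uM$ be given together with an abelian-group splitting $\tau$ of $\phi(C_{pq}/C_p)$. I first replace $\tau$ by its Weyl-average $\tau':=q^{-1}\sum_{g\in C_q}\tau\circ c_g$, with $q^{-1}$ taken in the target $\Z/p$. This $\tau'$ is automatically $C_q$-equivariant, and because $\phi(C_{pq}/C_p)$ is a Mackey map out of a $C_q$-trivial source its image sits inside $\uM(C_{pq}/C_p)^{C_q}$, so $\tau'\circ\phi(C_{pq}/C_p)$ still equals the identity. I then define $\psi:\uM\to\CC_p\bZp$ by setting $\psi(C_{pq}/C_p):=\tau'$, $\psi(C_{pq}/C_{pq}):=q^{-1}\tau'\circ res^{C_{pq}}_{C_p}(\uM)$, and $\psi=0$ on the two lower levels. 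All Mackey compatibilities touching $C_{pq}/C_q$ or $C_{pq}/e$ are automatic because $\CC_p\bZp$ vanishes there; the restriction compatibility between $C_{pq}/C_{pq}$ and $C_{pq}/C_p$ is immediate from $res^{C_{pq}}_{C_p}(\CC_p\bZp)=q$; and the transfer compatibility reduces, via the double coset formula plus equivariance of $\tau'$, to the identity $q^{-1}\tau'\circ(\sum_g c_g)=\tau'$. Finally $\psi\circ\phi=\mathrm{id}$ holds at $C_{pq}/C_p$ by construction and at $C_{pq}/C_{pq}$ after commuting $\phi$ past the restriction and using $res^{C_{pq}}_{C_p}(\CC_p\bZp)=q$.

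Part (b) is formally dual. Given $\psi:\uM\to\KK_p\bZp$ with an abelian-group section $\sigma$ of $\psi(C_{pq}/C_p)$, I put $\sigma'(x):=\sum_{g\in C_q} g\cdot\sigma(q^{-1}x)$, applying $q^{-1}$ on the source $\Z/p$ so that no division inside $\uM(C_{pq}/C_p)$ is required; a short check gives that $\sigma'$ is a $C_q$-equivariant section of $\psi(C_{pq}/C_p)$. I then set $\phi(C_{pq}/C_p):=\sigma'$ and $\phi(C_{pq}/C_{pq})(y):=tr^{C_{pq}}_{C_p}(\uM)(\sigma'(q^{-1}y))$, with zeros on the other two levels, and verify the two nontrivial Mackey identities and $\psi\circ\phi=\mathrm{id}$ by the same double coset bookkeeping, this time using that in $\KK_p\bZp$ the transfer is $q$ and the restriction is the identity. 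The one real obstacle throughout is arranging the Weyl-equivariance of $\tau$ and $\sigma$, which is what forces the splittings to go through $\Z/p$ and crucially uses $\gcd(p,q)=1$ so that $q$ is invertible on the correct side---the target for (a) and the source for (b). Once past this, every remaining verification is a one-line double coset calculation.
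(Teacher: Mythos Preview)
Your argument is correct and follows the same construction as the paper's sketch immediately preceding the proposition: define the splitting $\psi$ by the given retraction at level $C_{pq}/C_p$ and by $q^{-1}$ times that retraction precomposed with restriction at level $C_{pq}/C_{pq}$, with zeros elsewhere. The one substantive addition you make is the Weyl-averaging step, replacing the raw abelian-group retraction $\tau$ by $\tau'=q^{-1}\sum_{g\in C_q}\tau\circ c_g$ (and dually for $\sigma$). This is a genuine improvement: the paper simply sets $\psi(C_{pq}/C_p)=\tau$ without comment, but for $\psi$ to be a map of Mackey functors one needs $\psi(C_{pq}/C_p)$ to intertwine the $c_g$-maps, and the transfer compatibility $tr^{C_{pq}}_{C_p}(\CC_p\bZp)\circ\psi(C_{pq}/C_p)=\psi(C_{pq}/C_{pq})\circ tr^{C_{pq}}_{C_p}(\uM)$ only reduces (via the double coset formula) to an identity once $\tau$ is $C_q$-equivariant. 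Your averaging handles this cleanly, and your observation that it must be performed on the $\Z/p$ side so that $q$ is invertible is exactly the point.
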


\section{Cohomology of representation spheres}\label{sph-coh}
In this section, we compute the cohomology of representation spheres. These play a crucial role in computing the cohomology of a point via the cofibre sequences 
$$S(V)_+ \to S^0 \to S^V$$
for any representation $V$ of $\GG$. In the associated long exact sequence, the cohomologies of $S^0$ and $S^V$ encode values of cohomology of a point. Therefore, if we know the value of $\uH^\ast_{\GG}(S(V)_+)$, then we deduce a relation between $\uH^{\ast-V}_{\GG}(S^0)$ and $\uH^\ast_{\GG}(S^0)$.

 The irreducible representations of $\GG$ are given by $\xi^k$ where $\xi$ is the $2$-dimensional representation induced by the $pq^{th}$-roots of unity. We compute the cohomology of $S(\xi^k)$ for various $k.$ This splits into three cases: $k$ relative to $pq,$ $k$ divisible by $p$ and $k$ divisible by $q.$
\begin{prop}\label{AG} \mbox{  } \\
 a) $\begin{displaystyle}
  \underline{H}^{\alpha}_{\GG}({\GG/e}_+; \uA) = 
  \begin{cases}
    \uA_{\GG/e}, & \mbox{for } |\alpha| = 0 \\
    0,  &\mbox{Otherwise}
    
 \end{cases}
\end{displaystyle}$\\
b) For $(j,pq) =1,$
$$
  \underline{H}^{\alpha}_{\GG}({S(\xi^j)}_+; \uA) = 
  \begin{cases}
    R_{pq}, & \mbox{for } |\alpha| = 0 \\
    
    L_{pq}, & \mbox{for } |\alpha| = 1 \\
    0,  &\mbox{Otherwise}
    
  \end{cases}
$$
\end{prop}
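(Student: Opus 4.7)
The proof has two parts. For (a), the plan is to apply the change-of-groups isomorphism $\tilde{H}^\alpha_{C_{pq}}((C_{pq}/K)_+ \wedge X; \uA) \cong \tilde{H}^\alpha_K(X; \uA)$ from the preliminaries with $X = (C_{pq}/e)_+$ viewed as a $K$-space. Decomposing $C_{pq}/e$ restricted to $K$ as $\bigsqcup_{[C_{pq}:K]} K/e$ turns the smash product into a wedge, and a second application of change of groups (this time for the trivial subgroup of $K$) reduces the computation to $\tilde{H}^{|\alpha|}(S^0;\Z)^{[C_{pq}:K]}$. This vanishes unless $|\alpha|=0$, in which case it equals $\Z^{[C_{pq}:K]} = \BB_{C_{pq}}(C_{pq}/K, C_{pq}/e) = \uA_{C_{pq}/e}(C_{pq}/K)$. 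The restriction and transfer maps inherited from this chain of identifications match those of the representable Mackey functor $\uA_{C_{pq}/e}$ by the naturality of change of groups.

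For (b), the key point is that $\gcd(j,pq)=1$ forces $C_{pq}$ to act freely on $S(\xi^j) \simeq S^1$, so its restriction to any subgroup $K$ is also free with quotient a circle. An equivariant CW structure with one free $0$-cell and one free $1$-cell produces the cofibre sequence
\[
(C_{pq}/e)_+ \to S(\xi^j)_+ \to \Sigma (C_{pq}/e)_+ .
\]
Feeding this into the long exact sequence in Bredon cohomology and applying part (a) confines $\uH^\alpha_{C_{pq}}(S(\xi^j)_+;\uA)$ to $|\alpha| \in \{0,1\}$. To pin down the values, I would apply change of groups level-by-level,
\[
\uH^\alpha_{C_{pq}}(S(\xi^j)_+;\uA)(C_{pq}/K) \cong \tilde{H}^\alpha_K(S(\xi^j)_+; \uA),
\]
and then use the standard fact that on a free $K$-CW-complex $Z$ the Bredon cohomology $\tilde{H}^\alpha_K(Z_+;\uA)$ agrees with $\tilde{H}^{|\alpha|}((Z/K)_+; \Z)$; at the cochain level this is visible because all cells are free and contribute copies of $\uA(K/e) = \Z$. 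This yields $\tilde{H}^{|\alpha|}(S^1_+;\Z)$, which is $\Z$ in degrees $0$ and $1$ and zero otherwise.

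It remains to identify the Mackey structure. The restriction $\res^H_K$ is realized as the pullback along the covering $S(\xi^j)/K \to S(\xi^j)/H$ of degree $[H:K]$; this acts as the identity on $H^0(S^1)$ and as multiplication by $[H:K]$ on $H^1(S^1)$. Dually, $\tr^H_K$ corresponds to the covering transfer, multiplying by $[H:K]$ on $H^0$ and being the identity on $H^1$. These are exactly the structure maps of $R_{pq} = R_p \otimes R_q$ in degree $0$ and of $L_{pq} = L_p \otimes L_q$ in degree $1$. The main obstacle is precisely this Mackey-theoretic bookkeeping: verifying the freeness formula for Bredon cohomology and then tracking the restrictions and transfers faithfully through the change-of-groups identifications. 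Once these are in hand the remaining computation is routine.
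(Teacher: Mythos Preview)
Your proof of (a) is essentially the paper's: both reduce via change of groups to the non-equivariant cohomology of a finite set, and the identification with $\uA_{C_{pq}/e}$ is immediate.

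For (b) you take a genuinely different route. The paper uses the cofibre sequence
\[
(C_{pq}/e)_+ \xrightarrow{\,1-g\,} (C_{pq}/e)_+ \longrightarrow S(\xi^j)_+
\]
for a generator $g$ of $C_{pq}$, so that $\uH^\alpha_{C_{pq}}(S(\xi^j)_+)$ sits in a short exact sequence between the cokernel and the kernel of $1-\uH^{\alpha}(g)$ acting on $\uA_{C_{pq}/e}$. At level $C_{pq}/H$ this map is $1-\sigma_{[C_{pq}:H]}$ for the cyclic permutation $\sigma$, and the restriction and transfer maps of $R_{pq}$ and $L_{pq}$ are read off directly from those of $\uA_{C_{pq}/e}$ restricted to these kernels and cokernels. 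Your argument instead exploits the freeness of the $C_{pq}$-action to identify each level with $\tilde H^{|\alpha|}(S^1_+;\Z)$ and then recovers the Mackey structure from covering-space pullback and transfer along $S(\xi^j)/K \to S(\xi^j)/H$. Both are correct. Your approach is geometrically transparent and avoids any extension problem, but it leans on the freeness of the action and on the identification of the Mackey transfer with the covering transfer, which, as you note, needs to be checked. The paper's approach is more algebraic and, crucially, is the template reused verbatim in the non-free case (Proposition~\ref{G/Cp}, where the stabiliser is $C_p$), so it buys uniformity across the later computations.
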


\begin{proof} To prove a) we use the following reductions
\begin{align*}
\uH^{\alpha}_{\GG}({\GG/e}_+; \uA)(\GG/H)  &\cong \tilde{H}^\alpha_{\GG}(\GG/e_+ \wedge \GG/H_+;\uA)  \\ 
                                                                                                            &\cong \tilde{H}^{|\alpha|}({\GG/H}_+; \Z)
\end{align*}
Hence a) follows. For b) consider the cofibre sequence of $\GG$-spectra 
\begin{equation}\label{cofe}
\xymatrix@C-=0.50cm{ {\GG/e}_{+} \ar[r]^{1-g} &{\GG/e}_{+} \ar[r] &S(\xi^j)_{+}}
\end{equation}
 for some generator $g$ of $\GG$. This induces the long exact sequence
$$\xymatrix@C-=0.25cm{\cdots \ar[r] &\underline{H}^{\alpha -1}_{\GG}({\GG/e}_+) \ar[r] &\underline{H}^{\alpha}_{\GG}({S(\xi^j)}_+) \ar[r] &\underline{H}^{\alpha}_{\GG}({\GG/e}_+) \ar[r] &\cdots}$$
which implies the short exact sequence
$$0 \to \coker (1-\uH_{\GG}^{\alpha -1}(g)) \to \uH^{\alpha}_{\GG}(S(\xi^j)_+) \to \ker(1-\uH_{\GG}^\alpha(g)) \to 0. $$
 From $a),$ the cohomology of $\underline{H}^{\alpha}_{\GG}(S(\xi^j)_+)$ is non zero only when $|\alpha|$ is {0} or $1.$ We further compute the kernel and cokernel factors by calculating 
$$\uH_{\GG}^{\alpha}(g)\colon \underline{H}^{\alpha}_{\GG}({\GG/e}_+)  \to \underline{H}^{\alpha}_{\GG}({\GG/e}_+).$$ 
The identification of $\underline{H}^{\alpha}_{\GG}({\GG/e}_+)$ with $\underline{H}_{\GG}^{0}({\GG/e}_+)$ is natural, so it suffices to compute $\underline{H}_{\GG}^{\alpha}(g)$ on $\underline{H}_{\GG}^0$. Thus we have,
$$ (1-\underline{H}_{\GG}^{\alpha}(g))(\GG/H)  = 
  \begin{cases}
    0 , & \mbox{for } H =\GG \\ 
     1 - \sigma_q, & \mbox{for } H = C_p \\
     1 - \sigma_p, & \mbox{for } H = C_q \\
     1 - \sigma_{pq}, & \mbox{for } H = \{e\} 
   
  \end{cases}$$
Where $\sigma_k$ is the cycle $(1,2, \cdots, k)$ in $\Sigma_k$, the symmetric group of $k$-elements. The result follows.
\end{proof}

We have the following cofibre sequence 
\begin{equation}\label{cof-e}
S(\xi)_+ \to S^0 \to S^\xi
\end{equation} 
which leads to  the long exact sequence 
\begin{equation}\label{ex-e}
\cdots \to \uH^{\alpha -1}_{\GG}(S(\xi)_+) \to \uH^{\alpha-\xi}_{\GG}(S^0) \to \uH^{\alpha}_{\GG}(S^0) \to \uH^\alpha_{\GG}(S(\xi)_+) \to \cdots
\end{equation}
For computations using this exact sequence, we need to understand the maps 
$$\uH^{\alpha +\xi -1}_{\GG}(S(\xi)_+)\stackrel{\delta_\xi}{\to} \uH^{\alpha}_{\GG}(S^0)$$ 
and 
$$\uH^\alpha_{\GG}(S^0)\stackrel{\pi_\xi}{\to} \uH^{\alpha}_{\GG}(S(\xi)_+).$$ 
The following Proposition is useful in this regard. 
\begin{prop} \label{sus-e} 
a) If $|\alpha|=0$, the map $\delta_\xi(\GG/e)$ is an isomorphism. In other cases, $\delta_\xi(\GG/e) =0$.\\
b) If $|\alpha|=0$, the map $\pi_\xi(\GG/e)$ is an isomorphism. In other cases, $\pi_\xi(\GG/e) =0$.\\
\end{prop}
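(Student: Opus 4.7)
The strategy I would use is to evaluate everything at the level $C_{pq}/e$ via the change-of-groups formula, thereby reducing the claims to easy calculations in nonequivariant cohomology. Recall (as used in the proof of Proposition~\ref{AG}) that for any $C_{pq}$-space $Y$,
$$\uH^\alpha_{C_{pq}}(Y)(C_{pq}/e) \cong \tilde{H}^{|\alpha|}(Y; \Z).$$
Under this identification, the cofibre sequence $S(\xi)_+ \to S^0 \to S^\xi$ becomes the underlying nonequivariant cofibre sequence $S^1_+ \to S^0 \to S^2$ (since $\xi$ is a two-dimensional real representation), and the $RO(C_{pq})$-graded long exact sequence \eqref{ex-e} evaluated at $C_{pq}/e$ becomes the ordinary integer-graded long exact sequence of this cofibre sequence. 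The maps $\delta_\xi(C_{pq}/e)$ and $\pi_\xi(C_{pq}/e)$ are then just the connecting and restriction maps in this nonequivariant sequence.

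Next I would dispose of the case $|\alpha| \neq 0$: both statements are immediate, since the target $\uH^\alpha_{C_{pq}}(S^0)(C_{pq}/e) = \tilde{H}^{|\alpha|}(S^0; \Z) = 0$ forces $\delta_\xi(C_{pq}/e) = 0$, and the source $\uH^\alpha_{C_{pq}}(S^0)(C_{pq}/e) = 0$ forces $\pi_\xi(C_{pq}/e) = 0$.

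For $|\alpha| = 0$ I would read off the two relevant pieces of the nonequivariant long exact sequence. Around degree $0$, exactness (using $\tilde{H}^{-1}(S^2) = \tilde{H}^1(S^2) = 0$) gives
$$0 \to \tilde{H}^0(S^0) \xrightarrow{\pi_\xi(C_{pq}/e)} \tilde{H}^0(S^1_+) \to 0,$$
and around degrees $1$–$2$, exactness (using $\tilde{H}^1(S^0) = \tilde{H}^2(S^0) = 0$) gives
$$0 \to \tilde{H}^1(S^1_+) \xrightarrow{\delta_\xi(C_{pq}/e)} \tilde{H}^2(S^2) \to 0,$$
where the suspension isomorphism identifies $\tilde{H}^2(S^2) \cong \tilde{H}^0(S^0)$. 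Since each of $\tilde{H}^0(S^0)$, $\tilde{H}^0(S^1_+)$, and $\tilde{H}^1(S^1_+)$ equals $\Z$, exactness forces both $\pi_\xi(C_{pq}/e)$ and $\delta_\xi(C_{pq}/e)$ to be isomorphisms.

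There is essentially no obstacle here: the entire argument is formal bookkeeping in the nonequivariant long exact sequence once the change-of-groups identification has been invoked. The only minor care needed is to track the suspension-isomorphism shift when converting $\tilde{H}^\alpha(S^\xi)$ to $\tilde{H}^{\alpha-\xi}(S^0)$, but nothing beyond this is required.
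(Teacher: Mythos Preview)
Your proposal is correct and takes essentially the same approach as the paper: both reduce to nonequivariant cohomology via the natural isomorphism $\uH^{\alpha}_{C_{pq}}(X;\uA)(C_{pq}/e)\cong \tilde{H}^{|\alpha|}(X;\Z)$, identify the underlying cofibre sequence as $S^1_+\to S^0\to S^2$, and read off the result from the nonequivariant long exact sequence. Your write-up simply spells out in more detail the exactness arguments that the paper compresses into the single remark that $\pi$ is an isomorphism on $\tilde{H}^0$ and the attaching map is an isomorphism on $\tilde{H}^1$.
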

\begin{proof}
The proof relies on the natural isomorphism $\uH^{\alpha}_{\GG}(X;\uA)(\GG/e)\cong \tilde{H}^{|\alpha|}(X;\Z)$. Non-equivariantly, the cofibre sequence \eqref{cof-e} reduces to the cofibre 
$$S^1_+\stackrel{\pi}{\to} S^0 \to S^2.$$
 The result now follows from the fact that $\pi$ induces an isomorphism on $\tilde{H}^0$ and the attaching map is an isomorphism on $\tilde{H}^1$.   
\end{proof}

This leads to the following result by applying Proposition \ref{AG}. 

\begin{prop}\label{shex-e} 
a) If $|\alpha|\geq 1$ or $|\alpha|\leq -3$,
$$\uH^{\alpha}_{\GG}(S^0)\cong \uH^{\alpha+\xi}_{\GG}(S^0).$$
b) If $|\alpha| = 0$, there is a short exact sequence of Mackey functors 
$$0\to L_{pq} \to \uH^\alpha_{\GG}(S^0) \to \uH^{\alpha + \xi}_{\GG}(S^0)\to 0$$
As a consequence, if $|\alpha|=-1$ and $\uH^\alpha_{\GG}(S^0)=0$, then $\uH^{\alpha+\xi}_{\GG}(S^0)=0$. \\
c) If $|\alpha|=-2$, as groups $\tilde{H}^{\alpha+\xi}_{\GG}(S^0)\cong \tilde{H}^\alpha_{\GG}(S^0)\oplus \Z$. \\
The results hold with $\xi$ replaced by $\xi^k$ where $k$ is prime to $pq$. 
\end{prop}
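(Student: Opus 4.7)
The strategy is to analyze the long exact sequence \eqref{ex-e} from the cofibre $S(\xi)_+\to S^0\to S^\xi$, using Proposition \ref{AG}(b) which identifies $\uH^\ast_{C_{pq}}(S(\xi)_+)$ as $R_{pq}$ in underlying degree zero, $L_{pq}$ in underlying degree one, and zero otherwise. Reindexing the LES around $\alpha+\xi$, each part is controlled by where $|\alpha|+1$ and $|\alpha|+2$ sit relative to the window $\{0,1\}$. Part (a) is then immediate: when $|\alpha|\geq 1$ or $|\alpha|\leq -3$, both $|\alpha+\xi-1|$ and $|\alpha+\xi|$ fall outside $\{0,1\}$, the neighboring $\uH^\ast(S(\xi)_+)$-terms vanish, and the LES gives the claimed isomorphism.

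For (b), with $|\alpha|=0$, the LES supplies
$$\uH^{\alpha+\xi-1}_{C_{pq}}(S^0)\xrightarrow{\phi} L_{pq}\to\uH^\alpha_{C_{pq}}(S^0)\to\uH^{\alpha+\xi}_{C_{pq}}(S^0)\to 0,$$
so the task is to show $\phi=0$. Since $|\alpha+\xi-1|=1\neq 0$, the natural identification $\uH^\gamma(S^0)(C_{pq}/e)\cong \tilde{H}^{|\gamma|}(S^0;\Z)$ forces $\phi$ to be zero at the level $C_{pq}/e$. To promote this to zero as a morphism of Mackey functors, I would use that on $L_{pq}\cong L_p\otimes L_q$ the restriction maps $\res^K_e$ are multiplication by $[K:e]$ on $\Z$ and hence injective; naturality of restriction then forces $\phi(C_{pq}/K)=0$ for every $K$. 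The consequence in (b) uses the same principle: when $|\alpha|=-1$, the LES produces $0\to\uH^{\alpha+\xi}_{C_{pq}}(S^0)\to L_{pq}\to\cdots$ under the hypothesis $\uH^\alpha_{C_{pq}}(S^0)=0$, the domain has vanishing value at $C_{pq}/e$ (since $|\alpha+\xi|=1$), and the injectivity of restrictions on $L_{pq}$ forces any sub-Mackey functor trivial at $C_{pq}/e$ to be zero.

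For (c), with $|\alpha|=-2$, the LES reduces to $0\to\uH^\alpha_{C_{pq}}(S^0)\to\uH^{\alpha+\xi}_{C_{pq}}(S^0)\xrightarrow{\pi_\xi}R_{pq}\to\uH^{\alpha+1}_{C_{pq}}(S^0)$, and evaluating at $C_{pq}/C_{pq}$ produces a short exact sequence $0\to\tilde{H}^\alpha\to\tilde{H}^{\alpha+\xi}\to J\to 0$ with $J\subseteq\Z$. Since $J$ is free abelian, the sequence splits as abelian groups. To conclude $J\cong\Z$ (rather than $0$), I appeal to Proposition \ref{sus-e}(b), which makes $\pi_\xi$ an isomorphism at $C_{pq}/e$; picking a preimage $z$ of $1\in R_{pq}(C_{pq}/e)$, naturality of transfer gives $\pi_\xi(\tr^{C_{pq}}_e z)=\tr^{C_{pq}}_e(1)=pq$ in $R_{pq}(C_{pq}/C_{pq})\cong\Z$, so $pq\in J$ and $J$ is nonzero. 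The main obstacle is precisely this verification in (c): ruling out $J=0$ needs the transfer computation, whereas parts (a), (b) and the consequence are transparent once the correct degree window and the injectivity of restrictions on $L_{pq}$ are isolated.
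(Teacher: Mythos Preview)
Your proposal is correct and follows essentially the same route as the paper: analyze the long exact sequence \eqref{ex-e} reindexed at $\alpha+\xi$, feed in Proposition~\ref{AG}(b), and use the injectivity of restrictions in $L_{pq}$ together with the transfer computation in $R_{pq}$.

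The only notable variation is in (b). You argue that the map $\phi\colon \uH^{\alpha+\xi-1}_{C_{pq}}(S^0)\to L_{pq}$ vanishes because its source is zero at $C_{pq}/e$ and restrictions in $L_{pq}$ are injective. The paper instead shows directly that $\delta_\xi\colon L_{pq}\to \uH^\alpha_{C_{pq}}(S^0)$ is injective, by invoking Proposition~\ref{sus-e}(a) (so $\delta_\xi$ is an isomorphism at $C_{pq}/e$) and then again using that restrictions in $L_{pq}$ are injective. These are two sides of the same coin; your version has the mild advantage of not needing Proposition~\ref{sus-e} for part (b), only the vanishing of $\tilde H^1(S^0;\Z)$. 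For (c) your transfer argument is exactly what the paper does.
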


\begin{proof}
For the first statement note that $\uH^{\alpha +\xi -1}_{\GG}(S(\xi)_+)$ and $\uH^{\alpha + \xi}_{\GG}(S(\xi)_+)$ are $0$ if $|\alpha|\geq 1$ or $|\alpha|\leq -3$. Therefore a) follows from the exact sequence \eqref{ex-e}. When $|\alpha|=0$, $\uH^{\alpha +\xi -1}_{\GG}(S(\xi)_+)=L_{pq}$ and $\uH^{\alpha + \xi}_{\GG}(S(\xi)_+)=0$ by Proposition \ref{AG}. The map $\uH^{\alpha +\xi -1}_{\GG}(S(\xi)_+) \to \uH^\alpha_{\GG}(S^0)$ in the exact sequence \eqref{ex-e} is an isomorphism when evaluated at $\GG/e$ by Proposition \ref{sus-e}. As the restriction maps in $L_{pq}$ are all injective it follows that the map from $L_{pq}$ to  $\uH^\alpha_{\GG}(S^0)$ is indeed injective. Hence b) follows. To prove c) we consider \eqref{ex-e} indexed at $\alpha + \xi$ for $|\alpha|=-2$ and obtain
$$0\to \uH^\alpha_{\GG}(S^0)  \to \uH^{\alpha+\xi}_{\GG}(S^0) \to R_{pq} \to \cdots $$
The map  $ \uH^{\alpha+\xi}_{\GG}(S^0) (\GG/e) \to R_{pq}(\GG/e)$ is an isomorphism by Proposition \ref{sus-e}. We note that the image of 
$$tr_e^{\GG} : R_{pq}(\GG/e) (\cong \Z) \to R_{pq}(\GG/\GG) (\cong \Z)$$
is $pq\Z$. It follows that the map $ \tilde{H}^{\alpha+\xi}_{\GG}(S^0) \to R_{pq}(\GG/\GG)$ has image isomorphic to $\Z$. Hence c) follows. 
\end{proof}

In our computations of $\uH^\alpha_{\GG}(S^0)$, we write this Mackey functor up to isomorphism as a function of $|\alpha|$, $|\alpha^{C_p}|$, $|\alpha^{C_q}|$ and $|\alpha^{\GG}|$. In many cases, there is actually such a function. In Theorem \ref{ind}, we prove that the group $\tilde{H}^\alpha_{\GG}(S^0)$ is indeed a function of the four fixed points.   

\begin{remark}\label{cor-e}
We note that changing $\alpha$ by adding or subtracting copies of $\xi$ changes the values of $|\alpha|$ while keeping the values of the other fixed points unchanged. Therefore, Proposition \ref{shex-e} implies that the computation for $\uH^\alpha_{\GG}(S^0)$ for fixed values of $|\alpha^{C_p}|$, $|\alpha^{C_q}|$ and $|\alpha^{\GG}|$ splits into three parts : $|\alpha|\gg 0$, $|\alpha| \ll 0$, and $|\alpha|=0$. As $\xi$ changes $|\alpha|$ by an even number, one must also consider the cases $\alpha$ even and $\alpha$ odd separately among the three cases. In the following relations we shall use notations like `` $|\alpha| \gg 0~\mbox{even}$'' to mean the specific case that $|\alpha|$ is a large positive even number.  
\end{remark}

In order to compute the cohomology of the other representation spheres, one needs to know the values of $\uH^{\alpha}_{\GG}({\GG/C_p}_+)$ whose values may be computed from $\uH^{\alpha}_{C_p}(S^0)$ as we shall see below. In this regard, we have the following computation from \cite[ Theorem 4.9]{Lew88}.
\begin{prop} \label{Lewis orbits}
$$
  \uH^{\alpha}_{C_{p}}(S^0) = 
  \begin{cases}
      \uA[d_{\alpha}], & \mbox{for } |\alpha| = 0~\mbox{and}   \; |\alpha^{C_p}| = 0\\
     R, & \mbox{for } |\alpha| = 0 ~\mbox{and}  \; |\alpha^{C_p}| < 0\\
     L, & \mbox{for } |\alpha| = 0 ~\mbox{and}  \; |\alpha^{C_p}| > 0\\
   \bZ, & \mbox{for } |\alpha| \neq 0  ~\mbox{and} \; |\alpha^{C_p}| = 0\\
      \bZp, & \mbox{for } |\alpha| > 0 ~\mbox{and}  \; |\alpha^{C_p}| < 0 \;\mbox{and} \; |\alpha^{C_{p}}| \; \mbox{is even}\\
     \bZp, & \mbox{for } |\alpha| < 0  ~\mbox{and} \; |\alpha^{C_p}| > 1 \;\mbox{and} \; |\alpha^{C_{p}}| \; \mbox{is odd}\\
   
    0,  &\mbox{Otherwise}.
    
  \end{cases}
$$
\end{prop}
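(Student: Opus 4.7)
The plan is to follow the same inductive strategy that the authors apply for $C_{pq}$: compute the cohomology of representation spheres and then use the cofibre sequence $S(\lambda)_+ \to S^0 \to S^\lambda$ to propagate values of $\uH^\alpha_{C_p}(S^0)$ along $\alpha \mapsto \alpha + \lambda$, where $\lambda$ denotes the $2$-dimensional rotation representation of $C_p$ coming from the $p$-th roots of unity. Since tensoring with $\lambda$ adds $2$ to $|\alpha|$ without changing $|\alpha^{C_p}|$, once the answer is pinned down at one index per horizontal row $\{|\alpha^{C_p}|=c\}$, iterating the long exact sequence determines every other index on that row.

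First, the change-of-groups identity yields $\uH^\alpha_{C_p}({C_p/e}_+) = \uA_{C_p/e}$ for $|\alpha|=0$ and $0$ otherwise, exactly as in Proposition \ref{AG}(a). The cofibre sequence $C_p/e_+ \stackrel{1-g}{\to} C_p/e_+ \to S(\lambda)_+$ for a generator $g$ of $C_p$, together with the $C_p$-analogue of the kernel/cokernel computation in Proposition \ref{AG}(b), then produces $\uH^\alpha_{C_p}(S(\lambda)_+)=R_p$ for $|\alpha|=0$, $L_p$ for $|\alpha|=1$, and $0$ otherwise. Feeding these into the cofibre sequence $S(\lambda)_+ \to S^0 \to S^\lambda$ gives a six-term exact sequence relating $\uH^{\alpha-\lambda}_{C_p}(S^0)$ and $\uH^\alpha_{C_p}(S^0)$, in direct analogy with Proposition \ref{shex-e}.

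The remainder is an induction on $|\alpha|$, case-split by the sign and parity of $|\alpha^{C_p}|$. In the row $|\alpha^{C_p}|=0$ with $|\alpha|\neq 0$, both neighbouring terms of the exact sequence vanish, and the group-level identification $\uH^\alpha_{C_p}(S^0)(C_p/e)\cong \tilde H^{|\alpha|}(S^0;\Z)$ pins the Mackey functor to $\bZ$. The $L_p$ and $R_p$ values arise at $|\alpha|=0$ with $|\alpha^{C_p}|$ strictly positive or negative, where the exact sequence feeds in one of $L_p$ or $R_p$ from one side while the opposite side vanishes. The $\bZp$ entries live in the ``corner'' regions where $|\alpha|$ and $|\alpha^{C_p}|$ have opposite signs; there, iterating the exact sequence expresses them as cokernels of restriction/transfer maps, and the parity of $|\alpha^{C_p}|$ dictates on which of the two corners the nontrivial $\Z/p$ actually appears.

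The main obstacle is the base case $|\alpha|=0=|\alpha^{C_p}|$, where the exact sequence presents $\uH^\alpha_{C_p}(S^0)$ as an extension of $R_p$ by $L_p$. By Proposition \ref{Cpmack}(c), $\Ext^1_{\BB_{C_p}}(\bZ,L_p)\cong \Z/p$, and a short computation using the sequence $0\to \bZ\to \uA\to R_p\to 0$ identifies $\Ext^1_{\BB_{C_p}}(R_p,L_p)$ with $\Z/p$ as well, so the possible extensions form the $\Z/p$-family $\uA[d]$. Determining the specific integer $d_\alpha$ such that $\uH^\alpha_{C_p}(S^0)\cong \uA[d_\alpha]$ requires tracking Euler classes of $\lambda$ through the long exact sequence and keeping careful account of signs; this is the substantive content of Lewis's original argument in \cite{Lew88}, and is the one step that cannot be reduced to the mechanical propagation used to handle the other cases.
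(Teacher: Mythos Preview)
The paper does not prove this proposition at all: it is quoted verbatim from \cite{Lew88}, Theorem 4.9, with the sentence ``In this regard, we have the following computation from \cite{Lew88}, Theorem 4.9'' and no further argument. So there is nothing in the paper to compare your proposal against.

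That said, your outline is a faithful transcription of the $C_{pq}$-method back to $C_p$, and is essentially how Lewis's argument runs. Your own final paragraph already concedes the one nontrivial point (the identification of $d_\alpha$) and defers it to \cite{Lew88}, so what you have written is really a sketch of the reduction together with a pointer to the same source the paper cites. If you want this to stand as an independent proof you would need to supply that last step; otherwise it is appropriate simply to cite \cite{Lew88} as the paper does.
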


Observe that the formula for $\uH^\alpha_{C_p}(S^0)$ depends only on $|\alpha|$ and $|\alpha^{C_p}|$ except in the case when both the dimensions are $0$. In that case, the Mackey functor also depends on the integer $d_\alpha$ which is well-defined in $\F_p^\times/\{\pm 1\}$. An element $\alpha \in RO(C_p)$ with $|\alpha|=0$ and $|\alpha^{C_p}|=0$ may be expressed as $\sum_i \xi^{k_i} - \xi^{l_i}$, where $\xi$ is the irreducible representation on which the generator acts via multiplication by $e^\frac{2\pi i}{p}$. In this case $d_\alpha \equiv \prod_i k_i l_i^{-1} \pmod p$.  

 We have the following computation for the cohomology of spheres $S(\xi^k)$ when $k$ is divisible by $p$.
\begin{prop} \label{G/Cp} a) $\Phi_p^\ast \uH^\alpha_{\GG}(S^0) \cong \uH^\alpha_{C_p}(S^0)$.  \\
b) $\uH^{\alpha}_{\GG}({\GG/C_p}_+; \uA) = \Ep\uH^{\alpha}_{C_p}(S^0; \uA).$ \\
c) For $k$ divisible by $p$ but relatively prime to $q$ we have, 
$$\uH^\alpha_{\GG}({S(\xi^k)}_+; \uA) \cong \CC_p(\uH^{\alpha -1}_{C_p}(S^0;\uA)) \oplus \KK_p(\uH^\alpha_{C_p}(S^0;\uA)).$$
\end{prop}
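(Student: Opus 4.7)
The plan is to prove the three parts in order. Parts (a) and (b) are change-of-groups arguments; part (c) combines a cofibre sequence with a four-term exact sequence of coefficient Mackey functors and requires an additional splitting argument.

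For part (a), I unwind the definition: $\Phi_p^\ast\uH^\alpha_{C_{pq}}(S^0)(C_p/K) = \tilde{H}^\alpha_{C_{pq}}(\Phi_p(C_p/K)_+; \uA)$. Since $\Phi_p(C_p/K) = C_{pq}/K$ for $K\in\{e,C_p\}$, the change-of-groups isomorphism $\tilde{H}^\alpha_{C_{pq}}(C_{pq}/K_+;\uA)\cong \tilde{H}^\alpha_K(S^0;\uA)$ (using $\res_K\uA = \uA$) identifies this with $\uH^\alpha_{C_p}(S^0)(C_p/K)$, and naturality of the adjunction identifies the restriction, transfer and conjugation maps. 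Part (b) is analogous: at level $C_{pq}/K$,
\[\uH^\alpha_{C_{pq}}(C_{pq}/{C_p}_+)(C_{pq}/K) = \tilde{H}^\alpha_{C_{pq}}(C_{pq}/{C_p}_+\wedge C_{pq}/K_+;\uA)\cong \tilde{H}^\alpha_{C_p}(((C_{pq}/K)|_{C_p})_+;\uA),\]
and the decomposition of $(C_{pq}/K)|_{C_p}$ into $C_p$-orbits matches the abelian-group value of $E_p\uH^\alpha_{C_p}(S^0) = \uH^\alpha_{C_p}(S^0)\otimes \FF_q$ at $C_{pq}/K$; the Mackey functor structure matches by naturality.

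For part (c), write $k = p\ell$ with $\gcd(\ell,q) = 1$. Since $\xi^k$ has kernel $C_p$, the sphere $S(\xi^k)$ is inflated from the $C_q$-sphere $S(\xi^\ell_{C_q})$, and the $C_q$-cofibre sequence $C_q/e_+\stackrel{1-g^\ell}{\to} C_q/e_+\to S(\xi^\ell_{C_q})_+$ inflates to
\[C_{pq}/{C_p}_+\stackrel{1-g^\ell}{\to} C_{pq}/{C_p}_+\to S(\xi^k)_+.\]
The induced long exact sequence, combined with part (b), reduces the problem to kernels and cokernels of $1-g^\ell$ on $E_p\uH^\alpha_{C_p}(S^0)$. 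Applying the argument of Proposition \ref{AG} at the level of $C_q$-Burnside-coefficient cohomology produces the four-term exact sequence of $C_q$-Mackey functors
\[0\to R_q\to \FF_q\stackrel{1-g^\ell}{\to}\FF_q\to L_q\to 0,\]
whose terms are levelwise free abelian; external tensor with $\uM = \uH^\alpha_{C_p}(S^0)$ therefore preserves exactness, yielding $\ker(1-g^\ell) = \KK_p\uM$ and $\coker(1-g^\ell) = \CC_p\uM$. The long exact sequence then collapses to
\[0\to \CC_p\uH^{\alpha-1}_{C_p}(S^0)\to \uH^\alpha_{C_{pq}}(S(\xi^k)_+)\to \KK_p\uH^\alpha_{C_p}(S^0)\to 0.\]

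The main obstacle is to show this short exact sequence splits, which is what upgrades the extension to a direct sum. I would argue by case analysis on $\uH^\alpha_{C_p}(S^0)$ using the classification of Proposition \ref{Lewis orbits}. For the values $\bZ$, $R_p$, $L_p$, vanishing of the relevant $\Ext^1$ group in the spirit of Proposition \ref{Cpqext}(b) should force the splitting. For $\bZp$, the criterion of Proposition \ref{cpzp}(b) applies once a splitting of the projection at the $C_{pq}/C_p$-level is exhibited (which is possible because the level is $\Z/p$). The remaining case $\uA_p[d]$ occurs only when the appropriate fixed-point dimensions vanish, and a direct splitting can then be constructed from the Mackey functor structure of the middle term combined with the explicit generator of $\uA_p[d]$ at the top level.
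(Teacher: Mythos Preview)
Your setup for all three parts matches the paper's approach closely: the change-of-groups arguments for (a) and (b), the cofibre sequence ${C_{pq}/C_p}_+ \stackrel{1-g}{\to} {C_{pq}/C_p}_+ \to S(\xi^k)_+$ for (c), and the identification of kernel and cokernel as $\KK_p$ and $\CC_p$ of the $C_p$-cohomology are all as in the paper.

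The divergence is in the splitting argument for (c). The paper's route is shorter: rather than a full case analysis on $\uH^\alpha_{C_p}(S^0)$, one first checks using Proposition~\ref{Lewis orbits} that $\uH^{\alpha-1}_{C_p}(S^0)$ and $\uH^\alpha_{C_p}(S^0)$ are \emph{simultaneously} nonzero in exactly one situation, namely $|\alpha|=0$ and $|\alpha^{C_p}|>2$, where the sequence reads
\[0 \to \CC_p\bZp \to \uH^{\alpha}_{C_{pq}}(S(\xi^k)_+) \to \KK_p L_p \to 0.\]
Since $\KK_p L_p(C_{pq}/C_p)\cong \Z$ is free, the left map is automatically a split injection at level $C_{pq}/C_p$, and Proposition~\ref{cpzp}(a) finishes the argument. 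Your proposed case analysis would ultimately work but is more labor than needed, and there is a small mismatch: the cases $\uH^\alpha_{C_p}(S^0)\in\{\bZ,R_p,\bZp,\uA[d]\}$ all force $\uH^{\alpha-1}_{C_p}(S^0)=0$, so there is nothing to split there; the single nontrivial case is your ``$L_p$'' case, and there the left-hand term is $\CC_p\bZp$, which is not of the form $\bbA$, so Proposition~\ref{Cpqext}(b) does not apply as stated. The split-injection criterion of Proposition~\ref{cpzp}(a) is the correct tool here.
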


\begin{proof}
The statement a) follows from the natural isomorphism
$$\tilde{H}^{\alpha}_{\GG}({\GG/C_p}_+ \wedge X) \cong \tilde{H}^{\res_{C_p}(\alpha)}_{C_p}(X).$$
We note that this isomorphism identifies the groups $\uH^\alpha_{\GG}(\GG/C_p)(S)$ with $\Ep(\uH^\alpha_{C_p}(S^0))(S)$ as groups. The restriction and transfer maps  for $\GG/C_q \to \GG/\GG$ may be read off from the corresponding $C_p$-equivariant maps. The other restrictions and transfer may be computed as in Proposition \ref{AG} to deduce b).

For the cohomology of the sphere, we use the cofibre sequence of $\GG$-spectra 
\begin{equation}\label{cofp}
{\GG/C_p}_+ \stackrel{1-g}{\to} \GG/{C_p}_+ \to S(\xi^k)_+ 
\end{equation}
for some generator $g$ of $C_p \subseteq \GG/C_p$. This induces the long exact sequence 
$$ \cdots \to \underline{H}^{\alpha -1}_{\GG}({\GG/C_p}_+) \to \underline{H}^{\alpha}_{\GG}({S(\xi^k)}_+) \to \underline{H}^{\alpha}_{\GG}({\GG/C_p}_+) \to \cdots$$
and hence the short exact sequence,
$$0\to \coker(1-\uH_{\GG}^{\alpha-1}(g)) \to \uH^{\alpha}_{\GG}(S(\xi^k)_+) \to  \ker(1-\uH_{\GG}^\alpha(g)) \to 0 $$

We deduce that $\ker(1-\uH_{\GG}^\alpha(g))$ and $\coker(1-\uH_{\GG}^{\alpha-1}(g))$ are $\mathcal{K}_p \underline{H}^{\alpha}_{C_{p}}(S^0)$ and $\mathcal{C}_p \underline{H}^{\alpha-1}_{C_{p}}(S^0)$ respectively analogously as in the proof of Proposition \ref{AG}.

It remains to check that the above sequence splits as a direct sum. Observe that both terms are non-zero only when $|\alpha|=0$ and $|\alpha^{C_p}|>2$. In this case, we have
$$0 \to \CC_p \bZp \to  \uH^{\alpha}_{\GG}({S(\xi^p)}_+) \to  \KK_p L \to 0.$$
As  $\KK_pL(\GG/C_p)\cong \Z$, the map $\CC_p\bZp \to  \uH^{\alpha}_{\GG}({S(\xi^p)}_+) $ is a split injection at $\GG/C_p$. Thus, we are done by Proposition \ref{cpzp}. 
\end{proof}

Using Proposition \ref{G/Cp} and Proposition \ref{Lewis orbits}, we may write down the explicit expression for $\uH^\alpha_{\GG}(S(\xi^k)_+)$ restricting to the important cases $|\alpha|\gg 0$ and $|\alpha| \ll 0$. 
\begin{cor}\label{p-sphere}
Let $\alpha \in RO(\GG)$ be such that $|\alpha|\neq 0$ or $1$, and $k$ be an integer prime to $q$ and divisible by $p$. Then,
$$
  \underline{H}^{\alpha}_{\GG}(S(\xi^k)_+) = 
  \begin{cases}
      \KK_p \bZ, & \text{if}~ |\alpha| >0,   \; |\alpha^{C_p}| = 0\\
    \CC_p \bZ, & \text{if} ~|\alpha| >0,   \; |\alpha^{C_p}| =1 \\
    \CC_p \bZp, & \text{if} ~|\alpha| >0,   \; |\alpha^{C_p}| <0, \;\text{and} \; |\alpha^{C_{p}}| \; \text{is odd}\\
    \KK_p \bZp ,  & \text{if} ~|\alpha| >0,   \; |\alpha^{C_p}| <0, \;\text{and} \;|\alpha^{C_{p}}| \; \text{is even}\\
    \CC_p \bZ, & \text{if} ~|\alpha| <0,   \; |\alpha^{C_p}| = 1\\
    \KK_p \bZ, & \text{if} ~|\alpha| <0,   \; |\alpha^{C_p}| = 0\\
    \CC_p \bZp,  & \text{if} ~|\alpha| < 0,  \; |\alpha^{C_p}| >2,\;\text{and} \; |\alpha^{C_{p}}| \; \text{is even}\\
    \KK_p \bZp \rangle,  & \text{if} ~|\alpha| <0,   \; |\alpha^{C_p}| > 2,\;\text{and} \; |\alpha^{C_{p}}| \; \text{is odd}\\
    0,  &\text{Otherwise}
    
  \end{cases}
$$
In particular if $(|\alpha|\geq 2, |\alpha^{C_p}|\geq 2)$ or $(|\alpha|\leq -1, |\alpha^{C_p}|\leq -1)$, $\uH^\alpha_{\GG}(S(\xi^k)_+)=0$. 
\end{cor}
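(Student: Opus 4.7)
The plan is to derive the corollary directly from the two previously established results: Proposition \ref{G/Cp}(c), which gives the decomposition
$$\uH^\alpha_{C_{pq}}(S(\xi^k)_+) \;\cong\; \CC_p\bigl(\uH^{\alpha-1}_{C_p}(S^0)\bigr) \,\oplus\, \KK_p\bigl(\uH^{\alpha}_{C_p}(S^0)\bigr),$$
and Proposition \ref{Lewis orbits}, which evaluates each summand via Lewis--Stong. All of the content of the corollary is combinatorial bookkeeping on top of these two inputs, so the proposal amounts to a case analysis; the hypothesis $|\alpha|\neq 0,1$ is exactly what ensures both $|\alpha|$ and $|\alpha-1|$ are nonzero, so the $\uA[d_\alpha]$, $R$, $L$ rows of Proposition \ref{Lewis orbits} (which require $|\alpha|=0$) never enter.

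First I would record the effect of shifting by the trivial one-dimensional representation on the fixed-point invariants: $|\alpha - 1| = |\alpha| - 1$ and $|(\alpha-1)^{C_p}| = |\alpha^{C_p}| - 1$, so the parity of $|\alpha^{C_p}|$ flips. I would then run through the cases of $(|\alpha|, |\alpha^{C_p}|)$ listed in the corollary and, for each, consult Proposition \ref{Lewis orbits} to read off $\uH^{\alpha}_{C_p}(S^0)$ and $\uH^{\alpha-1}_{C_p}(S^0)$. For example, when $|\alpha|>0$ and $|\alpha^{C_p}|=0$, the second group sits in the $\bZ$-row and the first becomes the row $|\alpha|>0,\,|\alpha^{C_p}|=-1<0,\,|\alpha^{C_p}|$ odd, which returns $0$; hence the sum collapses to $\KK_p\bZ$. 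The other rows of the corollary arise analogously: in each non-trivial case exactly one of the two summands $\CC_p(\cdot)$, $\KK_p(\cdot)$ survives, so the direct sum reduces to a single piece and no extension problem appears.

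Finally, the cases $(|\alpha|\geq 2, |\alpha^{C_p}|\geq 2)$ and $(|\alpha|\leq -1, |\alpha^{C_p}|\leq -1)$ give the ``Otherwise'' entry $0$: in the first, both $\uH^{\alpha}_{C_p}(S^0)$ and $\uH^{\alpha-1}_{C_p}(S^0)$ vanish because in Proposition \ref{Lewis orbits} no row with $|\alpha|>0$ is compatible with $|\alpha^{C_p}|>0$ (similarly $|\alpha^{C_p}|\geq 1$), and an analogous check handles the negative range. I would tabulate these cases explicitly so that the reader can verify the complete dichotomy.

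The only genuine care needed is the parity accounting, since shifting by one flips the parity of $|\alpha^{C_p}|$ and this is exactly what switches between the $\CC_p$-summand and the $\KK_p$-summand in the output; that is the single ``obstacle,'' but it is routine. No new Mackey-functor calculations or extension arguments are required beyond those already carried out in Proposition \ref{G/Cp}.
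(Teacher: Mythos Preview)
Your proposal is correct and is exactly the approach the paper takes: the corollary is stated immediately after Proposition~\ref{G/Cp} with the remark that one may ``write down the explicit expression'' using Propositions~\ref{G/Cp} and~\ref{Lewis orbits}, and no further proof is given. Your case-by-case reading of those two results, together with the observation that $|\alpha|\neq 0,1$ rules out the $\uA[d_\alpha]$, $R$, $L$ rows, is precisely what is intended.
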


Proceeding analogously as in the case of $\xi$, we consider the cofibre sequence 
\begin{equation}\label{cof-p}
S(\xi^p)_+ \to S^0 \to S^{\xi^p}
\end{equation} 
which leads to  the long exact sequence 
\begin{equation}\label{ex-p}
\cdots \to \uH^{\alpha -1}_{\GG}(S(\xi^p)_+) \to \uH^{\alpha-\xi^p}_{\GG}(S^0) \to \uH^{\alpha}_{\GG}(S^0) \to \uH^\alpha_{\GG}(S(\xi^p)_+) \to \cdots
\end{equation}
and thus, want to compute the maps 
$$\uH^{\alpha +\xi^p -1}_{\GG}(S(\xi^p)_+)\stackrel{\delta_{\xi^p}}{\to} \uH^{\alpha}_{\GG}(S^0)$$ 
and 
$$\uH^\alpha_{\GG}(S^0)\stackrel{\pi_{\xi^p}}{\to} \uH^{\alpha}_{\GG}(S(\xi^p)_+).$$ 
In the following Proposition, we compute $\Phi_p^\ast(\delta_{\xi^p})$ and $\Phi_p^\ast(\pi_{\xi^p})$. Note that $\Phi_p^\ast(\uH^\alpha_{\GG}(S^0)) \cong \uH^\alpha_{C_p}(S^0)$. 
\begin{prop} \label{sus-p} 
a) The map $\Phi_p^\ast\delta_{\xi^p} : \Phi_p^\ast \uH^{\alpha +\xi^p -1}_{\GG}(S(\xi^p)_+) \to   \uH^{\alpha}_{C_p}(S^0) $ expressed using the identification
\begin{align*}
 \Phi_p^\ast \uH^{\alpha +\xi^p -1}_{\GG}(S(\xi^p)_+) & \cong \Phi_p^\ast (\CC_p\uH^{\alpha+\xi^p-2}_{C_p}(S^0) \oplus \KK_p\uH^{\alpha+\xi^p-1}_{C_p}(S^0)) \\
       &  \cong \uH^\alpha_{C_p}(S^0) \oplus \uH^{\alpha+1}_{C_p}(S^0)   
\end{align*} 
is an isomorphism on the first factor and $0$ on the second factor.\\
b) The map $\Phi_p^\ast\pi_{\xi^p}$
$$ \uH^\alpha_{C_p}(S^0) \to \Phi_p^\ast \uH^\alpha_{\GG}(S(\xi^p)_+)\cong \Phi_p^\ast (\CC_p\uH^{\alpha- 1}_{C_p}(S^0) \oplus \KK_p\uH^{\alpha}_{C_p}(S^0)) \cong \uH^{\alpha-1}_{C_p}(S^0) \oplus \uH^{\alpha}_{C_p}(S^0)$$ 
is an isomorphism onto the second factor and $0$ onto the first factor.\\
\end{prop}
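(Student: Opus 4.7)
The plan is to reduce both statements to a routine computation on the cofiber sequence restricted to $C_p$. The key observation is that $\xi^p$ has kernel $C_p$, so $\res_{C_p}(\xi^p) = 2\epsilon$ is the trivial two-dimensional representation, whence $S(\xi^p)|_{C_p} \simeq S^1$ and $S^{\xi^p}|_{C_p} \simeq S^2$ as $C_p$-spectra with trivial action. By Proposition \ref{G/Cp}(a), $\Phi_p^\ast$ on cohomology coincides with restriction to $C_p$, and it preserves cofiber sequences. Hence applying $\Phi_p^\ast$ to the long exact sequence \eqref{ex-p} produces the long exact sequence of the $C_p$-equivariant cofiber $S^1_+ \to S^0 \to S^2$ with trivial action, in which $\Phi_p^\ast\delta_{\xi^p}$ and $\Phi_p^\ast\pi_{\xi^p}$ become the associated boundary map and the map induced by the collapse.

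The trivial-action cofiber admits the canonical wedge splitting $S^1_+ \simeq S^0 \vee S^1$ of $C_p$-spectra, giving the decomposition $\uH^{\alpha'}_{C_p}(S^1_+) \cong \uH^{\alpha'}_{C_p}(S^0) \oplus \uH^{\alpha'-1}_{C_p}(S^0)$ for $\alpha' = \res_{C_p}\alpha$. Under this splitting the collapse $S^1_+ \to S^0$ induces inclusion of the $S^0$ wedge factor (and is zero on the $S^1$ factor), while the boundary map to $\uH^{\alpha'+1}_{C_p}(S^2) \cong \uH^{\alpha'-1}_{C_p}(S^0)$ is projection onto the $S^1$ factor (and is zero on the $S^0$ factor). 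These statements are immediate consequences of the wedge splitting.

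It remains to match this geometric splitting with the algebraic $\CC_p \oplus \KK_p$ decomposition of Proposition \ref{G/Cp}(c) after $\Phi_p^\ast$. By \eqref{res}, both $\Phi_p^\ast\CC_p$ and $\Phi_p^\ast\KK_p$ reduce to the identity on coefficients, so each summand restricts to a copy of $\uH^\bullet_{C_p}(S^0)$. Since the $\CC_p$ summand in Proposition \ref{G/Cp}(c) carries a $-1$ degree shift relative to the $\KK_p$ summand, it must match the $S^1$ wedge factor (which likewise carries the $-1$ shift), while the $\KK_p$ summand matches the $S^0$ factor. Combined with the preceding paragraph this yields (a) (boundary an isomorphism on the $\CC_p = S^1$ summand and zero on the $\KK_p = S^0$ summand) and (b) (collapse zero on the $\CC_p$ summand and an isomorphism onto the $\KK_p$ summand).

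The main obstacle lies in verifying this last matching, because the $\CC_p \oplus \KK_p$ splitting in Proposition \ref{G/Cp}(c) is not constructed directly from the wedge decomposition but rather from the auxiliary cofiber $C_{pq}/C_p{}_+ \stackrel{1-g}{\to} C_{pq}/C_p{}_+ \to S(\xi^p)_+$ together with the abstract splitting Proposition \ref{cpzp}. A clean way to handle this is to apply $\Phi_p^\ast$ to the auxiliary cofiber itself: since $C_p$ acts trivially on $C_{pq}/C_p$, the restriction $(C_{pq}/C_p){}_+|_{C_p}$ is $\bigvee_q S^0$ with trivial action, and the restricted map $1-g$ becomes ``identity minus cyclic shift'' on the $q$ summands. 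One then checks that the resulting kernel and cokernel on cohomology explicitly realize the $\KK_p$ ($= S^0$) and $\CC_p$ ($= S^1$) pieces of the wedge splitting, at which point the whole argument reduces to routine bookkeeping via $\res_{C_p}(\xi^p) = 2\epsilon$.
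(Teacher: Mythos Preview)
Your proposal is correct and follows essentially the same approach as the paper's own proof: both restrict the cofibre sequence \eqref{cof-p} to $C_p$ where it becomes the trivial-action sequence $S^1_+\to S^0\to S^2$, read off the maps from the wedge splitting $S^1_+\simeq S^0\vee S^1$, and then identify this geometric splitting with the $\CC_p\oplus\KK_p$ decomposition by restricting the auxiliary cofibre \eqref{cofp} to obtain $\vee_q S^0 \xrightarrow{1-g} \vee_q S^0 \to S^1_+$ and checking that the kernel and cokernel of $(1-g)^\ast$ match the $S^0$ and $S^1$ wedge summands respectively. The paper makes this last identification slightly more explicit (the composite $\vee_q S^0\to S^1_+\xrightarrow{\pi} S^0$ induces the diagonal, hence $\ker(1-g)^\ast$; the composite $S^0\to S^1_+\to \vee_q S^1$ induces the fold, hence an isomorphism onto $\mathrm{coker}(1-g)^\ast$), but your ``routine bookkeeping'' amounts to exactly this.
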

\begin{proof}
The Mackey functor $\Phi_p^\ast \uM$ is given by $S \mapsto \uM(S\times \GG/C_p)$. In view of the natural isomorphism $\tilde{H}^\alpha_{\GG}(X\wedge {\GG/C_p}_+; \uA) \cong \tilde{H}^{\res_{C_p}\alpha}_{C_p} (X; \uA)$, we have $\Phi_p^\ast(\uH^\alpha_{\GG}(X; \uA)) \cong \uH^\alpha_{C_p}(X; \uA)$ with the induced action of $C_p$ on $X$ by restriction. 

For $X=S(\xi^p)_+$ we have the restriction of the action to $C_p$ is the space $S^1_+$ with trivial action. The cofibre sequence \eqref{cof-p} restricted to $C_p$ becomes the cofibre sequence 
$$S^1_+\stackrel{\pi}{\to} S^0 \to S^2. $$
 As $\pi$ is a retraction, this induces an isomorphism $\uH^\alpha_{C_p}(S(\xi^p)_+)\cong \uH^\alpha_{C_p}(S^0) \oplus \uH_{C_p}^{\alpha-1}(S^0)$. It follows that the maps $\Phi_p^\ast\pi_{\xi^p}$ and $\Phi_p^\ast \delta_{\xi^p}$  are given by isomorphisms on the first and second factor respectively. 

It remains to identify the splitting above with that of Proposition \ref{G/Cp}. The restriction of the cofibre sequence \eqref{cofp} gives the sequence 
$$\bigvee_q S^0 \stackrel{1-g}{\to} \bigvee_q S^0 \to S^1_+$$
of $C_p$-spectra. The composite 
$$\bigvee_q S^0 \to S^1_+ \stackrel{\pi}{\to} S^0$$
 induces the diagonal map on cohomology which in turn is the kernel of $(1-g)^\ast$ as computed in Proposition \ref{G/Cp}. Analogously the composite $S^0 \to S^1_+ \to \vee_q S^1$ induces the fold map on cohomology which is an isomorphism on the cokernel of $(1-g)^\ast$. Since the inclusion $S^0\to S^1_+$ and $\pi : S^1_+ \to S^0$ are used to get the splitting above, these isomorphisms identify the splitting with that of Proposition \ref{G/Cp}.
\end{proof}

This leads to the following result by applying Corollary \ref{p-sphere}. 
\begin{prop}\label{shex-p} 
a) If $(|\alpha|\geq 1,|\alpha^{C_p}|\geq 1~ \mbox{or}~ \leq -3)$ or $(|\alpha|\leq -3,|\alpha^{C_p}|\leq -3~ or \geq 2)$,
$$\uH^{\alpha}_{\GG}(S^0)\cong \uH^{\alpha+\xi^p}_{\GG}(S^0).$$
b) If $(|\alpha|\geq 2,|\alpha^{C_p}|= 0)$ or $(|\alpha|\leq -3,|\alpha^{C_p}|= 0)$ , there is a short exact sequence of Mackey functors 
$$0\to \CC_p\langle \Z \rangle \to \uH^\alpha_{\GG}(S^0) \to \uH^{\alpha + \xi^p}_{\GG}(S^0)\to 0$$
As a consequence, if $|\alpha|\geq 2$ or $\leq -3$, $|\alpha^{C_p}|=-1$ and $\uH^\alpha_{\GG}(S^0)=0$, then $\uH^{\alpha+\xi^p}_{\GG}(S^0)=0$. \\
c) If $(|\alpha|\leq -1,|\alpha^{C_p}|= 1)$ , 
$$\uH^\alpha_{\GG}(S^0) \oplus \KK_p\bZp \cong \uH^{\alpha + \xi^p}_{\GG}(S^0)$$
d) If $|\alpha|>0$ and $|\alpha^{C_p}|=-2$ there is an Abelian group $A$ such that $\tilde{H}^{\alpha+\xi^p}_{\GG}(S^0)= A\oplus \Z$ and $\tilde{H}^{\alpha}_{\GG}(S^0) = A\oplus \Z/p$. \\
e) If $(|\alpha|\leq -4, |\alpha^{C_p}|=-2)$,  $\tilde{H}^{\alpha+\xi^p}_{\GG}(S^0) \cong \tilde{H}^{\alpha}_{\GG}(S^0) \oplus \Z$. \\
The results also hold when $\xi^p$ is replaced by $\xi^k$ where $k$ is divisible by $p$ but not by $q$.
\end{prop}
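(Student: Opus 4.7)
The plan is to apply the long exact sequence \eqref{ex-p} shifted by $\alpha\mapsto \alpha+\xi^p$. Since $|\xi^p|=2$, $|(\xi^p)^{C_p}|=2$ and $|(\xi^p)^{C_q}|=|(\xi^p)^{C_{pq}}|=0$, this produces the five-term exact sequence
$$\uH^{\alpha+\xi^p-1}_{C_{pq}}(S(\xi^p)_+) \to \uH^{\alpha}_{C_{pq}}(S^0) \to \uH^{\alpha+\xi^p}_{C_{pq}}(S^0) \to \uH^{\alpha+\xi^p}_{C_{pq}}(S(\xi^p)_+) \to \uH^{\alpha+1}_{C_{pq}}(S^0),$$
whose two outer sphere terms are read from Corollary \ref{p-sphere} at the underlying dimensions $(|\alpha|+1,|\alpha^{C_p}|+1)$ and $(|\alpha|+2,|\alpha^{C_p}|+2)$. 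All five claims reduce to a case analysis determined by which cell of the Corollary each of these two index pairs lands in.

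For part (a), every listed sub-hypothesis forces both index pairs into the explicit vanishing region at the end of Corollary \ref{p-sphere}, so both outer terms die and the middle arrow is an isomorphism. For (b), the sub-case $|\alpha|\geq 2, |\alpha^{C_p}|=0$ gives $\CC_p\bZ$ on the left and $0$ on the right; injectivity of $\CC_p\bZ\hookrightarrow \uH^\alpha$ follows once the preceding term $\uH^{\alpha-1+\xi^p}_{C_{pq}}(S^0)$ has been shown to vanish by a parity argument (its underlying dimensions are odd, and this is part of the inductive structure of the main computation), and $|\alpha|\leq -3$ is symmetric. The ``consequence'' clause follows by running the same sequence for $|\alpha^{C_p}|=-1$ and imposing $\uH^\alpha=0$, whereupon the remaining extension collapses via the cases already established.

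For (c), when $|\alpha|\leq -1$ and $|\alpha^{C_p}|=1$ the outer terms are $0$ and $\KK_p\bZp$, producing the short exact sequence
$$0 \to \uH^\alpha_{C_{pq}}(S^0) \to \uH^{\alpha+\xi^p}_{C_{pq}}(S^0) \to \KK_p\bZp \to 0.$$
I would split it by restricting via $\Phi_p^\ast$: Proposition \ref{G/Cp}(a) identifies the surjection at level $C_{pq}/C_p$ with the corresponding $C_p$-equivariant sequence, which is visibly split from Proposition \ref{Lewis orbits}; Proposition \ref{cpzp}(b) then promotes this level-$C_{pq}/C_p$ splitting to a splitting of Mackey functors. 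For (d) and (e), evaluating the same five-term sequence at $C_{pq}/C_{pq}$ turns $\CC_p\bZp$ into $\Z/p$ and $\KK_p\bZ$ into $\Z$; vanishing of $\tilde{H}^{\alpha\pm 1+\xi^p}_{C_{pq}}(S^0)$ at odd underlying dimensions reduces the sequence to a four-term exact sequence of Abelian groups, which splits on the right because $\Z$ is free, yielding the statement in (e) and (after identifying the remaining $\Z/p$-summand through the restriction to $C_p$) in (d).

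The main obstacle I anticipate is bookkeeping: the sign and parity of $|\alpha|$ and $|\alpha^{C_p}|$ select which of the eight nonzero cells of Corollary \ref{p-sphere} appear, with $\bZp$ versus $\bZ$ and $\CC_p$- versus $\KK_p$-type changing accordingly, which in turn determines whether (a) or (b) of Proposition \ref{cpzp} is the right splitting tool. Several sub-cases also require prior knowledge of adjacent values of $\uH^\ast_{C_{pq}}(S^0)$ in order to justify injectivity or surjectivity of the connecting maps, so the argument must be woven into the larger inductive computation. Uniformity of Corollary \ref{p-sphere} in $k$ with $p\mid k$ and $q\nmid k$ then gives the final sentence of the proposition immediately.
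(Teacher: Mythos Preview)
Your overall strategy---run \eqref{ex-p} at index $\alpha+\xi^p$ and read off the sphere terms from Corollary~\ref{p-sphere}---is the paper's strategy. But there are two genuine gaps.

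\textbf{Part (a) is wrong as stated.} You claim that ``every listed sub-hypothesis forces both index pairs into the explicit vanishing region'' of Corollary~\ref{p-sphere}. This is only true for the same-sign cases $(|\alpha|\geq 1,|\alpha^{C_p}|\geq 1)$ and $(|\alpha|\leq -3,|\alpha^{C_p}|\leq -3)$. In the mixed-sign cases, e.g.\ $|\alpha|\geq 1$ and $|\alpha^{C_p}|\leq -3$, the index pairs $(|\alpha|+1,|\alpha^{C_p}|+1)$ and $(|\alpha|+2,|\alpha^{C_p}|+2)$ land in the region $(>0,<0)$ of Corollary~\ref{p-sphere}, where the value is $\CC_p\bZp$ or $\KK_p\bZp$, \emph{not} zero. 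The paper handles these cases by a further argument: Proposition~\ref{sus-p} shows the outer maps are isomorphisms at level $C_{pq}/C_p$, so by Proposition~\ref{cpzp} the left map $\CC_p\bZp\to\uH^\alpha$ is split injective and the right map $\uH^{\alpha+\xi^p}\to\KK_p\bZp$ is split surjective; since $\CC_p\bZp\cong\KK_p\bZp$, the two middle terms differ by isomorphic summands and are therefore isomorphic. You need this extra step.

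\textbf{The ``parity argument'' in (b), (d), (e) is circular.} You justify injectivity in (b) and the vanishing of end terms in (d), (e) by asserting that adjacent values $\uH^{\alpha\pm 1}_{C_{pq}}(S^0)$ vanish ``by a parity argument \dots\ part of the inductive structure of the main computation.'' But Proposition~\ref{shex-p} is logically prior to those computations; it is one of the tools used to prove them. The paper avoids this circularity by instead using Proposition~\ref{sus-p} directly: the map $\delta_{\xi^p}$ is an isomorphism at level $C_{pq}/C_p$, and since the restriction map $\CC_p\bZ(C_{pq}/C_{pq})\to\CC_p\bZ(C_{pq}/C_p)$ is injective, $\delta_{\xi^p}$ is injective as a map of Mackey functors---no appeal to other values of $\uH^\ast(S^0)$ needed. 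Similarly in (d) and (e), the paper uses that $\pi_{\xi^p}$ is an isomorphism at $C_{pq}/C_p$, so via the transfer its image at $C_{pq}/C_{pq}$ is a nonzero subgroup of $\Z$, hence isomorphic to $\Z$. You already invoke exactly this mechanism (via $\Phi_p^\ast$ and Proposition~\ref{cpzp}) for part (c); you should use it throughout rather than appealing to unproved vanishing.

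A minor point: in (c) the case $|\alpha|=-1$ puts the sphere indices at $|\cdot|=0$ or $1$, outside the range of Corollary~\ref{p-sphere}; the paper first reduces to $|\alpha|\ll 0$ via Remark~\ref{cor-e}.
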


\begin{proof}
We use the long exact sequence \eqref{ex-p} indexed at $\alpha + \xi^p$, and Corollary \ref{p-sphere}. First we note that the fact that if $(|\alpha|\geq 2, |\alpha^{C_p}|\geq 2)$ or $(|\alpha|\leq -1, |\alpha^{C_p}|\leq -1)$, $\uH^\alpha_{\GG}(S(\xi^p)_+)=0$ implies the cases $(|\alpha|\geq 1,|\alpha^{C_p}|\geq 1)$ and $(|\alpha|\leq -3,|\alpha^{C_p}|\leq -3)$ of a). 

For the remainder of the cases of a), we obtain the exact sequence from \eqref{shex-p} and Corollary \ref{p-sphere}
$$\cdots \to  \CC_p\bZp \to \uH^{\alpha}_{\GG}(S^0) \to \uH^{\alpha+\xi^p}_{\GG}(S^0) \to \KK_p\bZp \to  \cdots$$
Proposition \ref{sus-p} implies that the leftmost and rightmost arrows are isomorphisms at $\GG/C_p$. So we deduce from Proposition \ref{cpzp} that there is a Mackey functor $\uN$ such that 
$$\uH^{\alpha+\xi^p}_{\GG}(S^0)\cong \uN \oplus \KK_p\bZp,$$
 and  
$$\uH^{\alpha}_{\GG}(S^0) \cong \uN \oplus \CC_p\bZp$$ 
which are isomorphic as $\KK_p\bZp$ and $\CC_p\bZp$ are. Hence a) follows. 

Next we prove b). Under the given assumptions on $\alpha$ we have $\uH^{\alpha+\xi^p}_{\GG}(S(\xi^p)_+)=0$ and $\uH^{\alpha+\xi^p-1}_{\GG}(S(\xi^p)_+)= \CC_p\bZ$. Thus the exact sequence \eqref{ex-p} in this range looks like 
$$\cdots \to \CC_p\bZ\to \uH^{\alpha}_{\GG}(S^0) \to \uH^{\alpha+\xi^p}_{\GG}(S^0) \to 0$$
where the left map is an isomorphism at $\GG/C_p$ by Proposition \ref{sus-p}. The restriction map $ \CC_p\bZ(\GG/\GG)\to  \CC_p\bZ(\GG/C_p)$ is injective, so that the map of Mackey functors  $\CC_p\bZ\to \uH^{\alpha}_{\GG}(S^0)$ has trivial Kernel. Thus b) follows. 

For c), it suffices to prove for $|\alpha|\ll 0$ by Remark \ref{cor-e}. If  $|\alpha|\ll 0$ odd and $|\alpha^{C_p}|=1$, which implies $|\alpha+\xi^p| \ll 0$ and $|(\alpha+\xi^p)^{C_p}|=3$ so that Corollary \ref{p-sphere} implies  $\uH^{\alpha+\xi^p}_{\GG}(S(\xi^p)_+) \cong \KK_p\bZp$. Therefore \eqref{ex-p} looks like 
$$0\to \uH^{\alpha}_{\GG}(S^0) \to \uH^{\alpha+\xi^p}_{\GG}(S^0) \to \KK_p\bZp \to  \cdots $$
where the right map is a split surjection at level $\GG/C_p$ by Proposition \ref{sus-p}. We now conclude c) from Proposition \ref{cpzp}.  In the case of e), the exact sequence \eqref{ex-p} reduces to 
$$0\to \uH^{\alpha}_{\GG}(S^0) \to \uH^{\alpha+\xi^p}_{\GG}(S^0) \to \KK_p\bZ \to \cdots $$
The right map is an isomorphism at $\GG/C_p$ which means that evaluating at $\GG/\GG$ the image is a non-trivial subgroup of $\Z$. Thus we have a short exact sequence 
$$0\to \tilde{H}^{\alpha}_{\GG}(S^0) \to \tilde{H}^{\alpha+\xi^p}_{\GG}(S^0) \to \Z \to 0 $$
and e) follows as $\Z$ is free. Lastly for d), \eqref{ex-p} looks like 
$$\cdots \to  \CC_p\bZp \to \uH^{\alpha}_{\GG}(S^0) \to \uH^{\alpha+\xi^p}_{\GG}(S^0) \to \KK_p\bZ \to  \cdots $$
The leftmost map satisfies is a split injection at $\GG/C_p$ by Proposition \ref{sus-p} and so is a split injection of Mackey functors by Proposition \ref{cpzp}. The rightmost map as in e) is a split surjection onto $\Z$ at $\GG/\GG$. Hence d) follows and the proof is complete. 
\end{proof}

\begin{remark}\label{cor-p}
Proposition \ref{shex-p} a) implies that  in computations of $\uH^\alpha_{\GG}(S^0)$ when $|\alpha| \neq 0$, we have three kinds of cases $|\alpha^{C_p}| \gg 0$, $|\alpha^{C_p}| \ll 0$ and $|\alpha^{C_p}|$ close to $0$. More explicitly, suppose $|\alpha^{C_q}|$ and $|\alpha^{\GG}|$ are fixed and $|\alpha|\neq 0$. As in remark \ref{cor-e}, it suffices to consider the following cases \\
1) For $|\alpha|>0$ even, three subcases $|\alpha^{C_p} |>0$, $|\alpha^{C_p} |< 0$ and $|\alpha^{C_p} |=0$. \\
2) For $|\alpha|>0$ odd, two subcases $|\alpha^{C_p} |>0$ and $|\alpha^{C_p} |< 0$. \\
3) For $|\alpha|<0$ even, three subcases $|\alpha^{C_p} |>0$, $|\alpha^{C_p} |< 0$ and $|\alpha^{C_p} |=0$. \\
4) For $|\alpha|<0$ odd, three subcases $|\alpha^{C_p} |<0$, $|\alpha^{C_p} |=1$ and $|\alpha^{C_p} |>1$. \\
Further, Proposition \ref{shex-p} c), provides a relation between the last two cases of 4). We note that a similar list of cases might be written down for $|\alpha|$ and $|\alpha^{C_q}|$ by applying Proposition \ref{shex-p} with $q$ instead of $p$. Thus, we essentially have a grid of cases to compute which we accomplish in the rest of the paper. 
\end{remark}

\mbox{ } \\

We end the section with some explicit computations. These are meant to be a guideline as to how the results above lead us to computations. We compute $\uH^\alpha_{\GG}(S^0)$ for $\alpha=\xi^p-n, \xi^p+\xi^q - n, n-\xi^p, n - \xi^p-\xi^q$.

We start with the exact sequence \eqref{ex-p} where $\alpha$ is of the form $\xi^p - n$. The first place from the left where $\uH^{\xi^p-n}_{\GG} (S(\xi^p)_+)$ is non-zero is for $n=2$. We have the sequence
$$\uH^{-2}_{\GG}(S^0) \to \uH^{\xi^p-2}_{\GG}(S^0) \to \uH^{\xi^p-2}_{\GG}(S(\xi^p)_+)\to \uH^{-1}_{\GG}(S^0)$$
which implies the isomorphism $\uH^{\xi^p -2}_{\GG}(S^0)\cong \uH^{\xi^p -2}_{\GG} (S(\xi^p)_+)\cong \KK_p\uA_p$ from Proposition \ref{G/Cp}. Moving one step further we obtain  
$$0\to  \uH^{\xi^p -1}_{\GG}(S^0) \to  \uH^{\xi^p -1}_{\GG} ({S(\xi^p)}_+) \stackrel{\psi}{\to} \uH^{0}_{\GG}(S^0) \to \uH^{\xi^p}_{\GG}(S^0) \to 0.$$
We know that $\uH^{0}_{\GG}(S^0)\cong \uA$ and  $\uH^{\xi^p -1}_{\GG} ({S(\xi^p)}_+) \cong \CC_p \uA_p$. From Proposition \ref{Cpqmap}, we have that the map $\psi$ is classified by $\psi(\GG/C_p)$ and $\psi(\GG/e)$. However, Propositions \ref{G/Cp} and \ref{Lewis orbits} imply that $\Phi_p^\ast\uH^{\xi^p -1}_{\GG}(S^0) =0$, and thus, the Mackey functor $\uH^{\xi^p -1}_{\GG}(S^0)$ is zero at $\GG/C_p$ and $\GG/e$. Therefore the maps $\psi(\GG/e)$ and $\psi(\GG/C_p)$ are isomorphisms and this allows us to completely compute the map $\psi$. It follows that  $\psi$ is injective and hence, $\uH^{\xi^p-1}_{\GG}(S^0)=0$ and 
$\uH^{\xi^p}_{\GG}(S^0)\cong \QQ_p\uA_p$, the cokernel of $\psi$. The terms further right in the sequence are all $0$. Therefore we obtain
\begin{equation} \label{xip+}
\uH^{\xi^p-n}_{\GG}(S^0) \cong \begin{cases} 0 &\mbox{if}~n\neq 0,2 \\
                                                                                    \KK_p\uA_p &\mbox{if}~n=2\\
                                                                                    \QQ_p\uA_p &\mbox{if}~n=0. \end{cases}   
\end{equation} 
Analogously we deduce the case 
\begin{equation} \label{xip-}
\uH^{n-\xi^p}_{\GG}(S^0) \cong \begin{cases} 0 &\mbox{if}~n\neq 0,2 \\
                                                                                    \CC_p\uA_p &\mbox{if}~n=2\\
                                                                                    \QQ_p\uA_p &\mbox{if}~n=0. \end{cases}   
\end{equation} 
Now we add a copy of $\xi^q$ to the groups in \eqref{xip+} using the exact sequence \eqref{ex-p} for $\xi^q$. The first non-zero part computes $\uH^{\xi^p+\xi^q-4}_{\GG}(S^0)\cong \KK_qR_q\cong R_{pq}$. The next part is 
$$ 0 \to \uH^{\xi^p+\xi^q-3}_{\GG}(S^0) \to \uH^{\xi^p+\xi^q-3}_{\GG}(S(\xi^q)_+) \to \cdots $$
The Mackey functor $ \uH^{\xi^p+\xi^q-3}_{\GG}(S(\xi^q)_+) \cong \CC_qR_q$ by Proposition \ref{p-sphere}. We note that the map from $\CC_qR_q$ to the next term in the sequence is injective. Therefore, $ \uH^{\xi^p+\xi^q-3}_{\GG}(S^0) = 0$. The next terms in the sequence are 
$$ 0 \to \CC_qR_q \to \uH^{\xi^p-2}_{\GG}(S^0) \to \uH^{\xi^p+\xi^q-2}_{\GG}(S^0)\to \uH^{\xi^p+\xi^q-2}_{\GG}(S(\xi^q)_+)\to 0$$
After substituting the values from above this looks like
$$0 \to \CC_qR_q \to \KK_p\uA_p \to \uH^{\xi^p+\xi^q-2}_{\GG}(S^0)\to \KK_q\bZ \to 0$$
The map $\CC_qR_q\to \KK_p\uA_p$ is determined by the map at $\GG/C_q$ where it is an isomorphism. We may compute the cokernel as $\KK_p\bZ$. So we obtain the short exact sequence 
$$0 \to  \KK_p\bZ \to \uH^{\xi^p+\xi^q-2}_{\GG}(S^0)\to \KK_q\bZ \to 0$$
and by symmetry also the exact sequence the other way around. It follows that the exact sequence is split and $\uH^{\xi^p+\xi^q-2}_{\GG}(S^0) \cong \KK_p\bZ\oplus \KK_q\bZ$. The next terms in the exact sequence are 
$$ 0 \to  \uH^{\xi^p+\xi^q-1}_{\GG}(S^0)\to \uH^{\xi^p+\xi^q-1}_{\GG}(S(\xi^q)_+)\to \cdots$$
The Mackey functor $\uH^{\xi^p+\xi^q-1}_{\GG}(S(\xi^q)_+) \cong \CC_q\bZ$ an
d by Proposition \ref{shex-p} b) the map from this onto the next term is injective. Thus, $\uH^{\xi^p+\xi^q-1}_{\GG}(S^0)=0$. The next terms are 
$$ 0 \to \CC_q\bZ \to \uH^{\xi^p}_{\GG}(S^0) \to \uH^{\xi^p+\xi^q}_{\GG}(S^0)\to \uH^{\xi^p+\xi^q}_{\GG}(S(\xi^q)_+)$$
with the last term being $0$ by Proposition \ref{p-sphere}. Therefore, $\uH^{\xi^p+\xi^q}_{\GG}(S^0)$ is the cokernel of the map $\CC_q\bZ \to \QQ_p\uA_p$ which is an isomorphism at level $\GG/C_q$. It follows that the cokernel is $\bbZ$. Thus we obtain 
\begin{equation} \label{xipq+}
\uH^{\xi^p+\xi^q-n}_{\GG}(S^0) \cong \begin{cases} 0 &\mbox{if}~n\neq 0,2,4 \\
                                                                                     R_{pq} &\mbox{if}~n=4\\
                                                                                    \KK_p\bZ \oplus \KK_q\bZ &\mbox{if}~n=2\\
                                                                                    \bbZ &\mbox{if}~n=0. \end{cases}   
\end{equation} 
Analogously we start with the computation \eqref{xip-} and put the values in the exact sequence \eqref{ex-p} to obtain 
\begin{equation} \label{xipq-}
\uH^{n-\xi^p-\xi^q}_{\GG}(S^0) \cong \begin{cases} 0 &\mbox{if}~n\neq 0,2,4 \\
                                                                                     L_{pq} &\mbox{if}~n=4\\
                                                                                    \CC_p\bZ \oplus \CC_q\bZ &\mbox{if}~n=2\\
                                                                                    \bbZ &\mbox{if}~n=0. \end{cases}   
\end{equation} 

\section{Initial computations and the independence theorem}\label{comp1}
In this section, we start the computations of $\uH^\alpha_{\GG}(S^0)$ in a systematic way. One of the main theorems of the section states that the groups in the Mackey functor are a function of the fixed point dimensions. This is proved at the end of the section following certain initial computations. 

Our approach to the computations follows certain clues from Remarks \ref{cor-e} and \ref{cor-p} in the last section. We get the idea that the computations are perhaps going to be easier if we let the fixed points to be non-zero. The computations for $C_p$-cohomology (Proposition \ref{Lewis orbits}) suggest that these are likely to just depend on the fixed point dimensions. In the Proposition below, we allow $|\alpha^H|$ to be non-zero and of the same sign. In addition when $|\alpha^{\GG}|$ is also of the same sign the computation is easy, the other cases are more involved.  

\begin{prop} \label{sign}
Let $\alpha \in RO(\GG)$.\\
a) Suppose $|\alpha^H|>0$ for all $H\neq \GG$. Then
$$\uH^{\alpha}_{\GG}(S^0) = 
\begin{cases} 
0 & \mbox{if}~|\alpha^{\GG}|>0,~\mbox{or}~|\alpha^{\GG}|<0~\mbox{odd}\\
\bbZ & \mbox{if}~|\alpha^{\GG}|=0 \\
\bbZpq &  \mbox{if}~|\alpha^{\GG}|<0 ~ \mbox{even} .
\end{cases}
$$
b) Suppose $|\alpha^H|<0$ for all $H\neq \GG$. Then
$$\uH^{\alpha}_{\GG}(S^0) = 
\begin{cases} 
0 & \mbox{if}~|\alpha^{\GG}|\leq 1\, \mbox{odd},~\mbox{or}~|\alpha^{\GG}|\neq 0\,\mbox{even}\\
\bbZ & \mbox{if}~|\alpha^{\GG}|=0 \\
\bbZpq &  \mbox{if}~|\alpha^{\GG}|\geq 3 ~ \mbox{odd} .
\end{cases}
$$
\end{prop}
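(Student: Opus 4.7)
The strategy is to combine the isomorphism and short-exact-sequence reductions from Propositions~\ref{shex-e} and \ref{shex-p} (and the $q$-analogue of the latter) with the base-case computations \eqref{xip+}--\eqref{xipq-} from the end of Section~\ref{sph-coh}.

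First I would observe that the hypothesis of each part places $\alpha$ in the regime where Corollary~\ref{p-sphere} gives vanishing of the relevant sphere cohomologies: for part (a), all $|\alpha^H|$ for $H\neq C_{pq}$ and their neighbours (after adding or subtracting copies of $\xi^p, \xi^q, \xi$) stay $\geq 1$, so the first case of Proposition~\ref{shex-p}(a) (and its $q$-analogue) together with Proposition~\ref{shex-e}(a) yields isomorphisms
$$\uH^\alpha_{C_{pq}}(S^0) \cong \uH^{\alpha \pm \xi^p}_{C_{pq}}(S^0) \cong \uH^{\alpha \pm \xi^q}_{C_{pq}}(S^0) \cong \uH^{\alpha \pm \xi}_{C_{pq}}(S^0)$$
whenever the translate also satisfies the hypothesis. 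Iterating these, the computation reduces to a single canonical representative for each value of $a = |\alpha^{C_{pq}}|$, which I would take to be $\alpha_a = a\epsilon + m(\xi^p + \xi^q)$, with $m \geq 1$ large enough for positivity in part (a) and $m \leq -1$ large enough for negativity in part (b), where $\epsilon$ is the trivial one-dimensional representation.

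The case $a = 0$ is then the already-known computation $\uH^{\pm(\xi^p+\xi^q)}_{C_{pq}}(S^0) \cong \bbZ$ from \eqref{xipq+} and \eqref{xipq-}. For $a > 0$ in part (a) (and analogously $a < 0$ in part (b)), the stripping can be continued all the way down to $\uH^{a\epsilon}_{C_{pq}}(S^0) = \tilde H^a_{C_{pq}}(S^0;\uA)$, which vanishes since $a \neq 0$. This handles all the ``$0$''-entries of both parts when $a$ has the same sign as the proper fixed dimensions.

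The remaining subtle cases are $a < 0$ in part (a) and $a > 0$ in part (b), where the stripping meets the boundary $|\alpha^{C_p}| \in \{0, 1\}$ or $|\alpha^{C_q}| \in \{0, 1\}$ and Corollary~\ref{p-sphere} contributes non-trivial terms ($\CC_p\bZ$, $\KK_p\bZ$ and their $q$-analogues) to the long exact sequences. I would bootstrap from the base-case Mackey functors of \eqref{xipq+} and \eqref{xipq-} by iterating the short exact sequences from Proposition~\ref{shex-p}(b) (and its $q$-analogue), and resolve the resulting Mackey functor extensions using the Ext-vanishing results of Proposition~\ref{Cpqext} and the splitting criteria of Proposition~\ref{cpzp}. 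The expected outcome is that the successive $\CC_p\bZ$ and $\KK_p\bZ$ contributions either collapse pairwise (giving $0$) or persist as the single summand $\bbZpq$. The hard part will be this bookkeeping: without Proposition~\ref{Cpqext} forcing the extensions to split at each stage, one could in principle accumulate a tower of contributions, so the cleanness of the $\bbZpq$ answer depends essentially on the Ext computations of Section~\ref{mack}.
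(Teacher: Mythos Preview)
Your reduction step is exactly what the paper does: Propositions~\ref{shex-e}(a) and \ref{shex-p}(a) (plus the $q$-analogue) give isomorphisms throughout the hypothesis region, so $\uH^\alpha_{C_{pq}}(S^0)$ depends only on $a=|\alpha^{C_{pq}}|$; the same-sign cases then reduce to $\uH^n_{C_{pq}}(S^0)=0$ for $n\neq 0$, and $a=0$ (and $a=1$ in (b)) come from \eqref{xipq+}, \eqref{xipq-}.

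For the hard cases ($a<0$ in (a), $a>1$ in (b)) your sketch is on the right track but misses the actual mechanism the paper uses, and as written has a gap. You propose to strip down from the hypothesis region toward the base cases \eqref{xipq+}, \eqref{xipq-}; but those base cases live on a single line $\xi^p+\xi^q-n$, and for $|n|$ large they sit in the \emph{opposite} sign region (all proper fixed-point dimensions negative, for (a)). So the path you are implicitly proposing crosses not one boundary but several, and the long exact sequences \eqref{ex-e} and \eqref{ex-p} are not three-term: at each crossing you must know the cohomology at the neighbouring \emph{odd} grading to identify the kernel or cokernel. Your plan does not account for these odd intermediate values, and Proposition~\ref{shex-p}(b) alone (a three-term statement) is not enough to carry the induction.

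The paper handles this by reversing the direction: for (a) it starts with all $|\alpha^H|<0$ (where the answer is $0$), then flips the signs of $|\alpha|,|\alpha^{C_p}|,|\alpha^{C_q}|$ one at a time, recording along the way a list of intermediate computations at both odd and even $\alpha$ with mixed signs (e.g.\ $\KK_p\bZp$, $\bbZq$, $\KK_q\bZ\oplus\bbZq$, $\bbZpq$, and finally $L_{pq}\oplus\bbZpq$ at $|\alpha|=0$). Part (b) is done dually, starting from all fixed points positive. The Ext results of Proposition~\ref{Cpqext} and the splitting criterion of Proposition~\ref{cpzp} are used exactly as you anticipate, but they are applied to these intermediate Mackey functors rather than in a single shot. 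If you rewrite your hard-case paragraph to follow this ``flip one sign at a time, track odd neighbours'' route, the argument goes through.
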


\begin{proof}
We first observe that the Mackey functors $\uH^\alpha_{\GG}(S^0)$ in a) or b) depend only on the value of $|\alpha^{\GG}|$. If $|\alpha_1^{\GG}| = |\alpha_2^{\GG}|$, then we must have that for some integers $j_i$ and $l_k$ between $1$ and $pq-1$, 
$$\alpha_1 + \sum_i \xi^{j_i} = \alpha_2 + \sum_k \xi^{l_k}$$

 Start with the case a). We note under the given conditions on $\alpha$, $\uH^\alpha_{\GG}(S^0) \cong \uH^{\alpha+\xi^j}_{\GG}(S^0)$ for $j$ relatively prime to $pq$ by Proposition \ref{shex-e}, and for $j$ divisible by exactly one of $p$ or $q$  by Proposition \ref{shex-p}. Thus, we deduce 
$$\uH^{\alpha_1}_{\GG}(S^0) \cong \uH^{\alpha_1 + \sum_i \xi^{j_i}}_{\GG}(S^0) = \uH^{\alpha_2+ \sum_k \xi^{l_k}}_{\GG}(S^0) \cong \uH^{\alpha_2}_{\GG}(S^0)$$   
For the case b), we have $\uH^\alpha_{\GG}(S^0) \cong \uH^{\alpha-\xi^j}_{\GG}(S^0)$ for all $j$ between $1$ and $pq-1$ by Propositions \ref{shex-e} and \ref{shex-p}, so, a similar argument applies. As a consequence, we may compute by making specific choices of $\alpha$. We directly deduce the case $|\alpha^{\GG} |> 0$ of a) and  $|\alpha^{\GG}|<0$ of b) as $\uH^n_{\GG}(S^0)=0$ when $n\neq 0$. Further, the computations \eqref{xipq+} and \eqref{xipq-} allow us to deduce the cases $|\alpha^{\GG}|=0$ of a) and b) and $|\alpha^{\GG}|=1$ of b). 

At this point it might seem like a few more explicit computations may be able to prove the Proposition easily. We avoid this approach because the explicit computations are quite involved, and also because the other path traverses through computations of $\uH^\alpha_{\GG}(S^0)$ as a function of the fixed points. 

Our approach  for the rest of the cases for a) involves starting with $|\alpha^H|<0$ for all $H$, where we know the value to be zero, then flip the signs of $|\alpha^H|$ by adding copies of $\xi$, $\xi^p$ and $\xi^q$ and make calculations using the exact sequences \eqref{ex-e} and \eqref{ex-p} along the way. Start with an odd element $\alpha$ with negative dimensional fixed points and apply Proposition \ref{shex-p} b) and Proposition \ref{shex-e} b) to deduce that if  $|\alpha^H|\leq 1$ odd for all $H\neq \GG$ and $|\alpha^{\GG}|<0$, $\uH^\alpha_{\GG}(S^0)=0$. This proves the first statement of a). Additionally, we may apply c) of Proposition \ref{shex-p} to deduce \\
1)  If $\alpha$ is odd and $|\alpha|<0, |\alpha^{C_p}|>1, |\alpha^{C_q}|\leq 1, |\alpha^{\GG}|<0$, then, $\uH^\alpha_{\GG}(S^0)\cong \KK_p\bZp$. \\ 
2)  If $\alpha$ is odd and $|\alpha|<0$, $|\alpha^{C_p}|>1$, $|\alpha^{C_q}|>1$, $|\alpha^{\GG}|<0$, then, $\uH^\alpha_{\GG}(S^0)\cong \KK_p\bZp \oplus \KK_q\bZq$. \\

Next, we consider $|\alpha^{\GG}|<0$ even. We take $|\alpha|\leq -4$, $|\alpha^{C_p}|=-2$, $|\alpha^{C_q}|=-2$ so that $\uH^{\alpha+\xi^p}_{\GG}(S(\xi^p)_+) \cong \KK_p\bZ$ and the exact sequence \eqref{ex-p} indexed at $\alpha+\xi^p$ reduces to  
$$0 \to \uH^\alpha_{\GG}(S^0) \to  \uH^{\alpha+\xi^p}_{\GG}(S^0) \to \KK_p\bZ \to \cdots$$
Observe that $ \uH^\alpha_{\GG}(S^0)=0$ and  $\uH^{\alpha+1}_{\GG}(S^0)=0$ as all the fixed points have dimension $<0$. It follows that for even $\alpha$ with $|\alpha|<0, |\alpha^{C_p}|=0, |\alpha^{C_q}|<0, |\alpha^{\GG}|<0$, $ \uH^\alpha_{\GG}(S^0)=\KK_p\bZ$. Now we consider $|\alpha| \leq -4$ in this expression and increase $\alpha$ by another copy of $\xi^p$. It follows from b) of Proposition \ref{shex-p} that  $ \uH^{\alpha+\xi^p}_{\GG}(S^0)$ is the cokernel of a map 
$$\CC_p\bZ \to \KK_p\bZ$$
which is an isomorphism at $\GG/C_p$ by Proposition \ref{sus-p}. It follows that the cokernel is $\bbZq$. Therefore for even $\alpha$ with $|\alpha|<0, |\alpha^{C_p}|>0, |\alpha^{C_q}|<0, |\alpha^{\GG}|<0$, $ \uH^\alpha_{\GG}(S^0)=\bbZq$. Next we increase $\alpha$ by $\xi^q$ starting with $|\alpha^{C_q}|=-2$, so that an analogous argument  implies that for $|\alpha|<0, |\alpha^{C_p}|>0, |\alpha^{C_q}|=0, |\alpha^{\GG}|<0$, we have the exact sequence (\eqref{ex-p} for the prime $q$)
$$0\to \bbZq \to \uH^\alpha_{\GG}(S^0) \to \KK_q\bZ\to 0 $$
which may be considered as an element of $\Ext^1_{\BB_{\GG}}(\KK_q\bZ,\bbZq)$. By Proposition \ref{Cpqext}, this group is $0$. Hence, the sequence splits and we have that  $ \uH^\alpha_{\GG}(S^0)\cong \KK_q\bZ \oplus \bbZq$. Increasing $\alpha$ by a further $\xi^q$, analogously as above we realize that for  even $\alpha$ with $|\alpha| <0, |\alpha^{C_p}|>0, |\alpha^{C_q}|>0, |\alpha^{\GG}|<0$, $ \uH^\alpha_{\GG}(S^0)$ is the cokernel of a map 
$$ \CC_q\bZ \to \KK_q\bZ \oplus \bbZq$$ 
The map $\CC_q\bZ \to \bbZq$ is $0$ by Proposition \ref{Cpqmap}, and thus, the cokernel may be computed as in the $p$-case to be $\bbZp \oplus \bbZq \cong \bbZpq$. Hence for even $\alpha$ with $|\alpha|<0, |\alpha^{C_p}|>0, |\alpha^{C_q}|>0, |\alpha^{\GG}|<0$, $ \uH^\alpha_{\GG}(S^0)\cong \bbZpq$. Now start with $|\alpha|=-2$ in this case, and add a copy of $\xi$ to obtain the exact sequence \eqref{ex-e} for $|\alpha| =0, |\alpha^{C_p}|>0 , |\alpha^{C_q}| > 0, |\alpha^{\GG}|<0$, 
$$0\to \bbZpq \to \uH^\alpha_{\GG} \to R_{pq} \to \KK_p\bZp\oplus \KK_q\bZq \to 0 $$
The last two terms in the exact sequence are computed from 2) above and the first statement of a) proved above. It is routine to  compute the kernel of $R_{pq} \to \KK_p\bZp\oplus \KK_q\bZq$ which turns out to be $L_{pq}$. Therefore the above sequence turns out to be 
$$0\to \bbZpq \to \uH^\alpha_{\GG} \to L_{pq}\to 0 $$ 
which may be considered as an element of $\Ext^1_{\BB_{\GG}}(L_{pq},\bbZpq)$. By Proposition \ref{Cpqext}, the sequence splits and we have that $\uH^\alpha_{\GG}(S^0) \cong L_{pq}\oplus \bbZpq$. Adding another copy of $\xi$ to such an $\alpha$ reduces to the case b) of Proposition \ref{shex-e} and hence, we obtain the last statement of a). 

Now we turn to b). Here we start with $\alpha$ with all fixed points positive and reverse the signs of $|\alpha|$, $|\alpha^{C_p}|$ and $|\alpha^{C_q}|$ by subtracting copies of $\xi$, $\xi^p$ and $\xi^q$ using the exact sequences \eqref{ex-e} and \eqref{ex-p}. We also note that Propositions \ref{Lewis orbits} and \ref{G/Cp} imply that $\Phi_p^\ast\uH^\alpha_{\GG}(S^0)=0$ and $\Phi_q^\ast\uH^\alpha_{\GG}(S^0)=0$ in the remaining cases of b), so that, it suffices to compute the group $\tilde{H}^\alpha_{\GG}(S^0)$. Consider the case where $|\alpha^{\GG}|>0$ even. First note that Proposition \ref{shex-e} b) implies \\
3) If $\alpha$ is even with  $|\alpha| =0, |\alpha^{C_p}|>0, |\alpha^{C_q}|>0, |\alpha^{\GG}|> 0$, then, $ \uH^\alpha_{\GG}(S^0)\cong L_{pq}$.\\
Now start with $|\alpha| = -2, |\alpha^{C_p}|=2, |\alpha^{C_q}|=2, |\alpha^{\GG}|>0$ so that we have, $\uH^{\alpha+\xi}_{\GG}(S^0)\cong L_{pq}$ and then Proposition \ref{shex-e} c) implies that $\tilde{H}^\alpha_{\GG}(S^0)=0$. Therefore for  even $\alpha$ with $|\alpha|<0, |\alpha^{C_p}|>0, |\alpha^{C_q}|>0, |\alpha^{\GG}|> 0$, we have $ \uH^\alpha_{\GG}(S^0)\cong 0$. Now Proposition \ref{shex-p} b) implies  for  $|\alpha| <0, |\alpha^{C_p}|=0, |\alpha^{C_q}|>0, |\alpha^{\GG}|> 0$, $ \uH^\alpha_{\GG}(S^0)\cong \CC_p\bZ$. Now we apply Proposition \ref{shex-p} e) to deduce that for  even $\alpha$ with $|\alpha|<0, |\alpha^{C_p}|<0, |\alpha^{C_q}|>0, |\alpha^{\GG}|> 0$, $ \uH^\alpha_{\GG}(S^0)\cong 0$. Analogously we also obtain the following results\\
4) If $\alpha$ is even with  $|\alpha| <0, |\alpha^{C_p}|<0, |\alpha^{C_q}|=0, |\alpha^{\GG}|> 0$, then, $ \uH^\alpha_{\GG}(S^0)\cong \CC_q\bZ$.\\
5) If $\alpha$ is even with  $|\alpha| <0, |\alpha^{C_p}|<0, |\alpha^{C_q}|<0, |\alpha^{\GG}|> 0$, then, $ \uH^\alpha_{\GG}(S^0)\cong 0$.\\
This implies the first statement of b). In fact, an analogous argument also gives the short exact sequence 
$$0\to \CC_p\bZ \to \uH^\alpha_{\GG}(S^0)\to \CC_q\bZ\to 0$$
 whenever $|\alpha|<0,|\alpha^{C_p}|=0, |\alpha^{C_q}|=0, |\alpha^{\GG}|> 0$, and by the symmetry of $p$ and $q$, also the sequence the other way around. Therefore, the sequence splits and we obtain \\
6) If $\alpha$ is even with  $|\alpha| <0, |\alpha^{C_p}|=0, |\alpha^{C_q}|=0, |\alpha^{\GG}|> 0$, then, $ \uH^\alpha_{\GG}(S^0)\cong \CC_p\bZ\oplus \CC_q\bZ$.

Finally, it remains to prove the case in b) where $\alpha$ is odd with $|\alpha^{\GG}|\geq 3$. Consider an odd $\alpha$ with $|\alpha|<0, |\alpha^{C_p}|>1, |\alpha^{C_q}|>1, |\alpha^{\GG}|>1$. Then from \eqref{ex-e} we have an exact sequence 
$$ \cdots \to \uH^{\alpha + \xi -1}_{\GG}(S^0) \to R_{pq} \to \uH^\alpha_{\GG}(S^0) \to 0.$$
We know  that $\uH^{\alpha+\xi -1}_{\GG}(S^0)\cong L_{pq}$ from 3) above, and from Proposition \ref{sus-e}, that the map $L_{pq} \to R_{pq}$ is an isomorphism at $\GG/e$. It follows that the map $L_{pq} \to R_{pq}$ is injective and its cokernel $\uH^\alpha_{\GG}(S^0) \cong \KK_p\bZp \oplus \KK_q \bZq$. Now Proposition \ref{shex-p} c) implies that \\
7) If $\alpha$ is odd with  $|\alpha| <0, |\alpha^{C_p}|=1, |\alpha^{C_q}|>1, |\alpha^{\GG}|>1$, $ \uH^\alpha_{\GG}(S^0)\cong \KK_q\bZq$. \\
8) If $\alpha$ is odd with  $|\alpha| <0, |\alpha^{C_p}|=1, |\alpha^{C_q}|=1, |\alpha^{\GG}|>1$, $ \uH^\alpha_{\GG}(S^0)=0$. \\
Now we start with $\alpha$ as in 8) and subtract copies of $\xi^p$ and $\xi^q$. Subtracting $\xi^p$ we obtain that for  $|\alpha|<0, |\alpha^{C_p}|=-1, |\alpha^{C_q}|=1, |\alpha^{\GG}|>1$,  the sequence \eqref{ex-p} indexed at $\alpha+\xi^p$ looks like 
$$  \cdots \to \CC_p\bZ \oplus \CC_q\bZ \to \KK_p\bZ \to \uH^\alpha_{\GG}(S^0) \to 0 $$
after substituting the values from  6) above and Proposition \ref{p-sphere}. The map from $\CC_q\bZ$ to $\KK_p\bZ$ is trivial by Proposition \ref{Cpqmap} and the map from $\CC_p\bZ$ to $\KK_p\bZ$ has cokernel $\bbZq$. Therefore, we have \\
9) If $\alpha$ is odd with $|\alpha|<0, |\alpha^{C_p}|<0, |\alpha^{C_q}|=1, |\alpha^{\GG}|>1 $, then, $ \uH^\alpha_{\GG}(S^0)\cong \bbZq$.\\
Note that this implies by Proposition \ref{shex-p} c) \\
10) If $\alpha$ is odd with $|\alpha|<0, |\alpha^{C_p}|<0, |\alpha^{C_q}|>1, |\alpha^{\GG}|>1 $, then, $ \uH^\alpha_{\GG}(S^0)\cong \KK_q\bZq \oplus \bbZq$.\\
Remove a copy of $\xi^q$ from an $\alpha$ as in 9), and apply the exact sequence \eqref{ex-p} indexed at $\alpha+\xi^q$.  Thus, we deduce the following exact sequence for $|\alpha|<0, |\alpha^{C_p}|<0, |\alpha^{C_q}|=-1, |\alpha^{\GG}|\geq 3$,
$$  \cdots \to \CC_q\bZ \to \KK_q\bZ \to \uH^\alpha_{\GG}(S^0) \to \bbZq \to \CC_q\bZ  \to \cdots$$
The last map must be zero as all the groups in $\CC_q\bZ$ are free and the ones in $\bbZq$ are finite. We compute the cokernel of $\CC_q\bZ \to \KK_q\bZ$ as before to be $\bbZp$ to deduce $\uH^\alpha_{\GG}(S^0) \cong \bbZpq$.  This completes the proof. 
\end{proof}

As stated in the proof of the Proposition above, we intentionally followed a route that traversed through a number of useful computations. We now note them down in the three Propositions below. These results are arranged according to $|\alpha|<0$ even, $|\alpha|<0$ odd, and $|\alpha|=0$. 

\begin{prop} \label{calc1odd} Let $\alpha$ be an odd element with  $|\alpha|<0$. \\
 a) Suppose $|\alpha^{C_p}|=1$. Then,
$$\uH^{\alpha}_{\GG}(S^0) = 
\begin{cases} 
0 & \mbox{if}~|\alpha^{C_q}|=1 ~\mbox{and}~|\alpha^{\GG}|\neq 1\\
0 & \mbox{if}~|\alpha^{C_q}|<0~\mbox{and}~|\alpha^{\GG}|<0 \\
\bbZp & \mbox{if}~|\alpha^{C_q}|<0 ~\mbox{and}~|\alpha^{\GG}|>1.
\end{cases}$$
b)Suppose that $ |\alpha^{C_p}|>1$. Then,  
$$\uH^{\alpha}_{\GG}(S^0) = 
\begin{cases} 
\KK_p\bZp & \mbox{if}~|\alpha^{C_q}|=1 ~\mbox{and}~|\alpha^{\GG}|\neq 1\\
\KK_p\bZp \oplus \KK_q\bZq & \mbox{if}~|\alpha^{C_q}|>1~\mbox{and}~|\alpha^{\GG}|\neq 1 \\
\KK_p\bZp & \mbox{if}~|\alpha^{C_q}|<0 ~\mbox{and}~|\alpha^{\GG}|<0\\
\KK_p\bZp \oplus \bbZp & \mbox{if}~|\alpha^{C_q}|<0 ~\mbox{and}~|\alpha^{\GG}|>1.
\end{cases}
$$
\end{prop}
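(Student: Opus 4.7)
The plan is to collect the subcases, most of which have in fact been established along the way in the proof of Proposition~\ref{sign}, and to extend each to the full range $|\alpha|<0$ odd by the propagation results of Section~\ref{sph-coh}. First I would observe that the first statement of Proposition~\ref{sign}~a), namely that $\uH^{\alpha}_{C_{pq}}(S^0)=0$ whenever $\alpha$ is odd with $|\alpha^H|\leq 1$ for $H\neq C_{pq}$ and $|\alpha^{C_{pq}}|<0$, settles all $|\alpha^{C_{pq}}|<0$ subcases of part~a), and point~8) of that proof handles the subcase $|\alpha^{C_q}|=1,\;|\alpha^{C_{pq}}|>1$ of~a). For part~b), points~1) and~2) cover the three $|\alpha^{C_{pq}}|<0$ configurations; point~7) (with $p$ and $q$ interchanged) gives the $|\alpha^{C_q}|=1,\;|\alpha^{C_{pq}}|>1$ subcase; and the unnumbered computation immediately preceding point~7) supplies the $|\alpha^{C_q}|>1,\;|\alpha^{C_{pq}}|>1$ subcase, with answer $\KK_p\bZp\oplus \KK_q\bZq$.

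The two remaining configurations, namely the subcases $|\alpha^{C_q}|<0,\;|\alpha^{C_{pq}}|>1$ of part~a) and of part~b), are obtained by applying the $p$--$q$ symmetry of $C_{pq}$ to points~9) and~10) of the proof of Proposition~\ref{sign}; this yields $\bbZp$ and $\KK_p\bZp\oplus \bbZp$ respectively.

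Since each argument cited above typically pins down the identification at a single base value of $|\alpha|$ (most often $|\alpha|=-1$), an extension step is required to obtain the result for every odd $|\alpha|<0$ and, where applicable, for the full range of $|\alpha^{C_p}|$ consistent with the stated configuration. For this I would apply Proposition~\ref{shex-e}~a) to propagate the computation along $|\alpha|$ (valid for $|\alpha|\leq -3$) and Proposition~\ref{shex-p}~a) to propagate along $|\alpha^{C_p}|$ in the regime $|\alpha^{C_p}|\geq 2$ or $\leq -3$; the boundary value $|\alpha^{C_p}|=1$ is bridged by Proposition~\ref{shex-p}~c), which also transparently produces the direct-sum form of the answer whenever $|\alpha^{C_p}|$ transitions across~$1$.

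The principal obstacle is bookkeeping: one has to verify at each step that the hypotheses of Propositions~\ref{shex-e}~a), \ref{shex-p}~a) and \ref{shex-p}~c) are indeed met, and at the edge value $|\alpha|=-1$ one must reach the configuration indirectly, first propagating in $|\alpha^{C_p}|$ at some deeper $|\alpha|\leq -3$ and then raising $|\alpha|$ via Proposition~\ref{shex-e}~a). No new Ext calculations are required beyond those already packaged into Propositions~\ref{Cpqext} and~\ref{cpzp}.
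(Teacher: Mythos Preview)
Your proposal is correct and matches the paper's approach exactly: Proposition~\ref{calc1odd} is presented in the paper as a summary of computations already carried out in the proof of Proposition~\ref{sign}, and you have correctly identified which of the numbered (and unnumbered) items 1)--10) of that proof supply each subcase, including the instances where the $p$--$q$ symmetry must be invoked. Your additional paragraph on propagation via Propositions~\ref{shex-e}~a), \ref{shex-p}~a) and \ref{shex-p}~c) is more explicit than the paper (which simply states 1)--10) for general $|\alpha|<0$), but this is a clarification rather than a different argument.
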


\begin{prop}\label{calc1even}
Let $\alpha$ be an even element with  $|\alpha|<0$. \\
a) Suppose $|\alpha^{C_p}|=0$. Then, 
$$\uH^{\alpha}_{\GG}(S^0) = 
\begin{cases} 
\KK_p\bZ & \mbox{if}~|\alpha^{C_q}|<0 ~\mbox{and}~|\alpha^{\GG}|<0\\
\CC_p\bZ & \mbox{if}~|\alpha^{C_q}|\neq 0~\mbox{and}~|\alpha^{\GG}|>0 \\
\CC_p\bZ\oplus \CC_q\bZ &\mbox{if}~ |\alpha^{C_q}|=0 ~\mbox{and}~|\alpha^{\GG}|>0 \\ 
\KK_p\bZ\oplus \bbZp & \mbox{if}~|\alpha^{C_q}|>0 ~\mbox{and}~|\alpha^{\GG}|<0. 
\end{cases}$$
b) Suppose $|\alpha^{C_p}|>0$. Then, 
$$\uH^{\alpha}_{\GG}(S^0) = 
\begin{cases} 
0 & \mbox{if}~|\alpha^{C_q}|\neq 0 ~\mbox{and}~|\alpha^{\GG}|>0\\
\bbZq & \mbox{if}~|\alpha^{C_q}|<0~\mbox{and}~|\alpha^{\GG}|<0 \\
\bbZpq & \mbox{if}~|\alpha^{C_q}|>0 ~\mbox{and}~|\alpha^{\GG}|<0 . 
\end{cases}$$
\end{prop}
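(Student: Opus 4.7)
Plan: The identifications in Proposition \ref{calc1even} are essentially a compilation of Mackey-functor calculations already performed during the proof of Proposition \ref{sign}, together with an application of the $p \leftrightarrow q$ symmetry of the setup. I would first extract from that proof the six cases which are displayed there verbatim: part (a) first ($\KK_p\bZ$), part (a) third (item labelled 6 in the proof of Proposition \ref{sign}), the $|\alpha^{C_q}|>0$ subcase of (a) second ($\CC_p\bZ$), part (b) second ($\bbZq$), part (b) third ($\bbZpq$), and the $|\alpha^{C_q}|>0$ subcase of (b) first ($0$). These are all produced in the chain of derivations that iteratively perturbs $\alpha$ from a fully negative or fully positive regime by adding or subtracting $\xi$, $\xi^p$, and $\xi^q$ using the long exact sequences \eqref{ex-e} and \eqref{ex-p} and Propositions \ref{shex-e} and \ref{shex-p}.

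To complete the proof I would handle the three remaining subcases by invoking the symmetry $p \leftrightarrow q$ applied to calculations also performed in the proof of Proposition \ref{sign}. The $|\alpha^{C_q}|<0$ subcase of (a) second is the $p \leftrightarrow q$ image of item 4 of the proof of Proposition \ref{sign}, which gives $\uH^\alpha \cong \CC_q\bZ$ for $|\alpha^{C_p}|<0, |\alpha^{C_q}|=0, |\alpha^{C_{pq}}|>0$; the swap yields $\CC_p\bZ$ in the required regime. Part (a) fourth is the $p \leftrightarrow q$ image of the intermediate identification $\uH^\alpha \cong \KK_q\bZ\oplus\bbZq$ for $|\alpha^{C_p}|>0, |\alpha^{C_q}|=0, |\alpha^{C_{pq}}|<0$ displayed between the derivations of (b) second and (b) third in the proof of Proposition \ref{sign}, which produces $\KK_p\bZ\oplus\bbZp$. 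The $|\alpha^{C_q}|<0$ subcase of (b) first is the $p \leftrightarrow q$ image of the vanishing for $|\alpha^{C_p}|<0, |\alpha^{C_q}|>0, |\alpha^{C_{pq}}|>0$ obtained in the proof of Proposition \ref{sign} by applying Proposition \ref{shex-p} e) to item 5.

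The main thing to verify is that the $p \leftrightarrow q$ swap is legitimate. This reduces to noting that the factorization $\BB_{C_{pq}} \cong \BB_{C_p}\otimes \BB_{C_q}$, the definitions of the Mackey functors $\CC_p$, $\KK_p$, $\bbZp$ and their $q$-analogues, and the tools from Section \ref{sph-coh} (Propositions \ref{shex-e}, \ref{shex-p}, \ref{sus-p}, Corollary \ref{p-sphere}) are all symmetric under interchange of $p$ and $q$. Hence every derivation in the proof of Proposition \ref{sign} admits an identical $p \leftrightarrow q$-swapped counterpart, and no essentially new calculation is required beyond this bookkeeping.
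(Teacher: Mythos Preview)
Your proposal is correct and matches the paper's approach exactly: the paper presents Proposition~\ref{calc1even} as a record of computations already carried out inside the proof of Proposition~\ref{sign}, together with the $p\leftrightarrow q$ symmetry, precisely as you describe. One tiny bookkeeping correction: the vanishing for $|\alpha|<0,\ |\alpha^{C_p}|<0,\ |\alpha^{C_q}|>0,\ |\alpha^{C_{pq}}|>0$ is derived in the proof of Proposition~\ref{sign} just \emph{before} items 4--6 (by applying Proposition~\ref{shex-p}~e) to the $\CC_p\bZ$ case), not from item~5 itself; this does not affect your argument.
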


\begin{prop}\label{calc1b}
a) Suppose $|\alpha| =0, |\alpha^{C_p}|>0, |\alpha^{C_q}|>0$. Then, 
$$\uH^{\alpha}_{\GG}(S^0) = 
\begin{cases} 
L_{pq} & \mbox{if}~|\alpha^{\GG}|>0\\
L_{pq} \oplus \bbZpq & \mbox{if}~ |\alpha^{\GG}|< 0.
\end{cases}$$
b) If $\alpha$ is an odd element with all fixed points having dimension $\leq 1$, then $\uH^\alpha_{\GG}(S^0)=0$. 
\end{prop}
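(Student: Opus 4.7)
First, observe that Part (a) is essentially a bookkeeping claim: both of its sub-cases have already been established in the course of proving Proposition \ref{sign}(a). The intermediate computation labelled ``3)'' there gives $\uH^\alpha_{C_{pq}}(S^0)\cong L_{pq}$ when $|\alpha|=0$ and $|\alpha^{C_p}|,|\alpha^{C_q}|,|\alpha^{C_{pq}}|>0$, while the short exact sequence $0\to \bbZpq \to \uH^\alpha \to L_{pq}\to 0$ constructed near the end of that proof splits via Proposition \ref{Cpqext}, producing $L_{pq}\oplus \bbZpq$ when $|\alpha^{C_{pq}}|<0$. For Part (a) I would therefore simply extract these two calculations and note that both are functions of the four fixed-point dimensions.

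For Part (b) I first reduce the Mackey-functor statement to a single group computation. For odd $\alpha$ with $|\alpha^{C_p}|\leq 1$ one has $|\alpha^{C_p}|\in\{1,-1,-3,\ldots\}$; inspecting the table of Proposition \ref{Lewis orbits} shows that every such line yields $\uH^\alpha_{C_p}(S^0)=0$, since the two non-zero lines require $|\alpha^{C_p}|$ either non-zero even or odd and $>1$. Proposition \ref{G/Cp}(a) then forces $\uH^\alpha_{C_{pq}}(S^0)$ to vanish at $C_{pq}/C_p$ and $C_{pq}/e$, and by the symmetric argument for $q$ also at $C_{pq}/C_q$. So only $\tilde{H}^\alpha_{C_{pq}}(S^0)$ has to be shown zero.

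Next I cut the index $\alpha$ down to finitely many configurations. Proposition \ref{shex-e}(a), iterated, gives $\tilde{H}^\alpha\cong \tilde{H}^{\alpha-\xi}$ whenever $|\alpha|\leq -3$, and the consequence in Proposition \ref{shex-e}(b) propagates vanishing across $|\alpha|=-1\mapsto|\alpha|=1$; hence I may take $|\alpha|$ arbitrarily negative. With $|\alpha|\ll 0$, Proposition \ref{shex-p}(a) and its $q$-analogue allow me to raise the sub-cases $|\alpha^{C_p}|\leq -3$ or $|\alpha^{C_q}|\leq -3$ up to $|\alpha^{C_p}|,|\alpha^{C_q}|=-1$, leaving configurations in which $|\alpha^{C_p}|,|\alpha^{C_q}|\in\{-1,1\}$ and $|\alpha^{C_{pq}}|\leq 1$ is odd---a finite list. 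The cases with $|\alpha^{C_p}|=|\alpha^{C_q}|=-1$ are dispatched by Proposition \ref{sign}(b), and the cases with $|\alpha^{C_p}|=1$ together with $(|\alpha^{C_q}|=1,|\alpha^{C_{pq}}|\neq 1)$ or $(|\alpha^{C_q}|=-1,|\alpha^{C_{pq}}|<0)$ are dispatched by Proposition \ref{calc1odd}(a) (together with its $p\leftrightarrow q$ mirror). The genuinely new configurations are the remaining few, all having $|\alpha^{C_{pq}}|=1$ together with at least one of $|\alpha^{C_p}|,|\alpha^{C_q}|$ equal to $1$: namely $(|\alpha^{C_p}|,|\alpha^{C_q}|)\in\{(1,1),(1,-1),(-1,1)\}$.

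For each remaining configuration I would run the long exact sequence \eqref{ex-p} (or its $q$-analogue) indexed at $\alpha+\xi^p$, evaluate the neighbour values via Corollary \ref{p-sphere} and Propositions \ref{calc1odd}, \ref{calc1even}, \ref{calc1b}(a), and then use Proposition \ref{sus-p} to identify the connecting maps at the critical level $C_{pq}/C_p$, so that the splitting machinery of Proposition \ref{cpzp} pins $\tilde{H}^\alpha$ between vanishing terms. The hard part will be exactly this last step: in these corner configurations neither neighbour in the long exact sequence is automatically zero, and the argument depends on checking that the connecting maps are \emph{isomorphisms} at $C_{pq}/C_p$, not merely surjections or injections. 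Tracking Proposition \ref{sus-p} through the index shifts is the delicate piece of bookkeeping, but it is precisely the type of argument that has been carried out throughout Section \ref{sph-coh} and in the proof of Proposition \ref{sign}.
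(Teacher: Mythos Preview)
Your treatment of part (a) is correct and is exactly what the paper does: both cases are extracted verbatim from the proof of Proposition~\ref{sign} (item ``3)'' for $|\alpha^{C_{pq}}|>0$, and the split short exact sequence $0\to\bbZpq\to\uH^\alpha\to L_{pq}\to 0$ for $|\alpha^{C_{pq}}|<0$).

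For part (b), your reduction to $|\alpha|\ll 0$ and $|\alpha^{C_p}|,|\alpha^{C_q}|\in\{-1,1\}$ is fine, but you then overcomplicate matters and overlook the one-line finish the paper uses. The key observation you miss is that the very same ``consequence'' clauses of Proposition~\ref{shex-e}(b) and Proposition~\ref{shex-p}(b) that you used to normalise $|\alpha|$ also propagate \emph{vanishing} from $|\alpha^{C_p}|=-1$ to $|\alpha^{C_p}|=1$ (and likewise for $q$). Since Proposition~\ref{sign}(b) already gives $\uH^\alpha=0$ for the configuration $(|\alpha^{C_p}|,|\alpha^{C_q}|)=(-1,-1)$ and \emph{every} odd $|\alpha^{C_{pq}}|\leq 1$ (including $|\alpha^{C_{pq}}|=1$), one application of Proposition~\ref{shex-p}(b) gives $(1,-1)$ and $(-1,1)$, and a second gives $(1,1)$---for all values of $|\alpha^{C_{pq}}|\leq 1$ simultaneously. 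This is precisely the argument recorded in the proof of Proposition~\ref{sign} (``Start with an odd element $\alpha$ with negative dimensional fixed points and apply Proposition~\ref{shex-p}~b) and Proposition~\ref{shex-e}~b)\ldots''); the paper states it only for $|\alpha^{C_{pq}}|<0$ because that was the case needed there, but the identical argument covers $|\alpha^{C_{pq}}|=1$.

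Your alternative plan---running \eqref{ex-p} at $\alpha+\xi^p$ for the residual $|\alpha^{C_{pq}}|=1$ configurations---does not work as written: the neighbour $\uH^{\alpha+\xi^p}_{C_{pq}}(S^0)$ has $|(\alpha+\xi^p)^{C_p}|=3$ with $|\alpha^{C_{pq}}|=1$, which is \emph{not} covered by Proposition~\ref{calc1odd}(b) (those entries all carry the caveat $|\alpha^{C_{pq}}|\neq 1$), so you cannot evaluate it. Running the sequence at $\alpha$ instead does lead to a valid argument, but it is still substantially longer than the two-line propagation above.
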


The next step in our computations is to start with the results of Proposition \ref{sign} and flip the sign of $|\alpha|$ using multiples of $\xi$. Some of these have already been computed and are stated in the Propositions \ref{calc1odd}, \ref{calc1even} and \ref{calc1b} above. The rest are computed in the Proposition below.
\begin{prop}\label{calc2}
Let $\alpha\in RO(\GG)$. \\
a) Suppose $|\alpha^{C_p}| > 0, |\alpha^{C_q}| >0$. Then 
$$\uH^{\alpha}_{\GG}(S^0) = 
\begin{cases} 
\bbZ & \mbox{if}~|\alpha|< 0~\mbox{and}~|\alpha^{\GG}|=0\\
L_{pq} \oplus \bbZ & \mbox{if}~ |\alpha|=0 ~\mbox{and}~ |\alpha^{\GG}|=0  .
\end{cases}
$$
b) Suppose $|\alpha^{C_p}| < 0, |\alpha^{C_q}| <0$. Then 
$$\uH^{\alpha}_{\GG}(S^0) = 
\begin{cases} 
0 & \mbox{if}~ |\alpha|>0~ \mbox{odd},~\mbox{and}~|\alpha^{\GG}|\leq 1 \\
R_{pq} &\mbox{if}~ |\alpha|=0~\mbox{and}~ |\alpha^{\GG}|\neq 0\\ 
R_{pq} \oplus \bbZ &\mbox{if}~ |\alpha|=0 ~\mbox{and}~ |\alpha^{\GG}|=0  \\
\bbZ \oplus \CC_p\bZp\oplus \CC_q\bZq &\mbox{if}~ |\alpha|> 0~\mbox{and}~|\alpha^{\GG}|=0\\
\CC_p\bZp \oplus \CC_q\bZq & \mbox{if}~ |\alpha|>0 ~ \mbox{even},~\mbox{a
nd} ~ |\alpha^{\GG}|\neq 0\\
\bbZpq &\mbox{if}~  |\alpha|>0~\mbox{odd},~\mbox{and}~|\alpha^{\GG}|>1.
\end{cases}
$$
\end{prop}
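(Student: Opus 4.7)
The plan is to propagate the Mackey functor values already established in Propositions \ref{sign}, \ref{calc1odd}, \ref{calc1even}, and \ref{calc1b} into the remaining sign patterns of $|\alpha|$, by iterating the exact sequences from Propositions \ref{shex-e} and \ref{shex-p} applied with $\xi$ and $\xi^p, \xi^q$ respectively. A parity observation used throughout is that $|\alpha|$, $|\alpha^{C_p}|$, $|\alpha^{C_q}|$ and $|\alpha^{C_{pq}}|$ all share the parity of the trivial summand of $\alpha$; in particular the hypothesis $|\alpha^{C_{pq}}|=0$ forces all four to be even.

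For part (a), the case $|\alpha|=0$ is obtained by reading off the short exact sequence of Proposition \ref{shex-e}(b),
$$ 0 \to L_{pq} \to \uH^{\alpha}_{C_{pq}}(S^0) \to \uH^{\alpha+\xi}_{C_{pq}}(S^0) \to 0. $$
The right-hand term has $|\alpha+\xi|=2>0$ with the remaining fixed-point dimensions unchanged, so Proposition \ref{sign}(a) identifies it as $\bbZ$; Proposition \ref{Cpqext}(c) shows $\Ext^1_{\BB_{C_{pq}}}(\bbZ,L_{pq})=0$, so the sequence splits, giving $L_{pq}\oplus \bbZ$. For $|\alpha|<0$, the parity forces all fixed-point dimensions to be even, and Proposition \ref{G/Cp}(a) combined with the table of Proposition \ref{Lewis orbits} then forces $\Phi_p^\ast\uH^\alpha$ and $\Phi_q^\ast\uH^\alpha$ to vanish. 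Hence the Mackey functor is concentrated at $C_{pq}/C_{pq}$, and we use Proposition \ref{shex-e}(c) (for $|\alpha|=-2$) and (a) (for $|\alpha|\le -4$), starting from the $|\alpha|=0$ value just computed, to show $\tilde H^\alpha(C_{pq}/C_{pq})\cong\Z$. This yields $\bbZ$.

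For part (b), we analogously extend the $|\alpha|\le 0$ calculations to $|\alpha|>0$. The case $|\alpha|=0$ is treated first as in part (a), invoking Proposition \ref{shex-e}(b) to present $\uH^\alpha$ as an extension of $\uH^{\alpha+\xi}$ (identified by Proposition \ref{sign}(b)) by $L_{pq}$, and applying Proposition \ref{Cpqext}(c) to split. In the odd subcases ($|\alpha|>0$ odd), we first use Proposition \ref{shex-e}(a) to bring the known values from Proposition \ref{sign}(b) down to $|\alpha|=-1$; then either the consequence of Proposition \ref{shex-e}(b) (when $\uH^{-1}$ vanishes, giving subcase with $|\alpha^{C_{pq}}|\le 1$) or the full exact sequence \eqref{ex-e} with the maps identified by Proposition \ref{sus-e} at level $C_{pq}/e$ transports the computation to $|\alpha|=1$ (subcase with $|\alpha^{C_{pq}}|>1$ odd, yielding $\bbZpq$). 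Finally Proposition \ref{shex-e}(a) carries the result to all larger odd $|\alpha|$. The even subcases with $|\alpha|>0$ are reached from the $|\alpha|=0$ values via Proposition \ref{shex-e}(b)–(c), with the resulting summands of the type $\CC_p\bZp\oplus \CC_q\bZq$ arising as kernels or cokernels of explicitly computable maps $R_{pq}\to\uH^{\alpha}$, and any extensions split again by Proposition \ref{Cpqext}.

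The main obstacle is book-keeping: each subcase requires identifying, at the appropriate Mackey-functor level, whether the connecting maps in the long exact sequences are injective, surjective, or trivial, and verifying that the extensions are split. Proposition \ref{sus-e}, which pins down the relevant maps at level $C_{pq}/e$, together with Proposition \ref{Cpqext}, which provides the vanishing Ext groups needed for splittings, are essential in every subcase. The parity observation is what allows us to rule out contributions at $C_{pq}/C_p$ and $C_{pq}/C_q$ in those subcases where the answer is supported only at $C_{pq}/C_{pq}$, by reducing to the $C_p$- and $C_q$-computations of Proposition \ref{Lewis orbits}.
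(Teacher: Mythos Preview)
Your treatment of part (a) is correct and matches the paper's argument. The odd subcases of part (b) are sketched loosely but the idea is right. However, your argument for the $|\alpha|=0$ case of part (b) has a genuine gap that makes the rest of (b) collapse.

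You write that Proposition \ref{shex-e}(b) presents $\uH^\alpha$ as an extension of $\uH^{\alpha+\xi}$ by $L_{pq}$, with $\uH^{\alpha+\xi}$ ``identified by Proposition \ref{sign}(b)''. But Proposition \ref{sign}(b) requires $|\alpha^H|<0$ for \emph{all} $H\neq C_{pq}$, in particular $|\alpha|<0$; after adding $\xi$ you have $|\alpha+\xi|=2>0$, so \ref{sign}(b) does not apply. In fact $\uH^{\alpha+\xi}$ (with $|\alpha+\xi|>0$, $|\alpha^{C_p}|<0$, $|\alpha^{C_q}|<0$) is precisely one of the values this Proposition is trying to compute, so the argument is circular. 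Since you then feed the $|\alpha|=0$ values into the even $|\alpha|>0$ subcases, those are unsupported as well.

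The paper fixes this by running the sequence in the other direction: it takes the full long exact sequence \eqref{ex-e} indexed at $\alpha$ (not the prepackaged short exact sequence of \ref{shex-e}(b)) and relates $\uH^\alpha$ to $\uH^{\alpha-\xi}$, where $|\alpha-\xi|=-2<0$ and Proposition \ref{sign}(b) \emph{does} apply. The nontrivial step is showing that the connecting map $R_{pq}\to \uH^{\alpha-\xi+1}$ vanishes; this holds because $\uH^{\alpha-\xi+1}\cong\bbA$ for some $A$ (again by \ref{sign}(b)) and Proposition \ref{Cpqmap}(c) kills any map $R_{pq}\to\bbA$. One then gets $0\to \uH^{\alpha-\xi}\to \uH^\alpha\to R_{pq}\to 0$, and the splitting when $|\alpha^{C_{pq}}|=0$ uses $\Ext^1(R_{pq},\bbZ)=0$ via Proposition \ref{Cpqext}(b) (noting $R_{pq}=\KK_pR_p$), not \ref{Cpqext}(c) as you wrote. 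With the $|\alpha|=0$ values thus established, your outline for the even $|\alpha|>0$ cases goes through by computing the cokernel of $L_{pq}\to R_{pq}$ (not a map out of $R_{pq}$, as you wrote).
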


\begin{proof}
For a) we start with the computation $|\alpha|>0, |\alpha^{\GG}|=0$ of Proposition \ref{sign} b) and subtract copies of $\xi$. Thus, when $|\alpha|=0, |\alpha^{\GG}|=0$, we have the exact sequence by Proposition \ref{shex-e} b) 
$$0\to L_{pq} \to \uH^{\alpha}_{\GG}(S^0) \to \bbZ\to 0$$
which is split exact by Proposition \ref{Cpqext}. Next, for $|\alpha|=-2$, Proposition \ref{Lewis orbits} implies that the only non-zero part of $\uH^\alpha_{\GG}(S^0)$ is at $\GG/\GG$. This is computed from Proposition \ref{shex-e} c) to be $\Z$, thus proving a).  

Now turn to b). We start with  $|\alpha|=0$ and consider the exact sequence \eqref{ex-e} 
$$0\to \uH^{\alpha-\xi}_{\GG}(S^0) \to \uH^\alpha_{\GG}(S^0) \to R_{pq} \to \uH^{\alpha-\xi+1}_{\GG}(S^0) \to \uH^{\alpha+1}_{\GG}(S^0) \to L_{pq} \to \cdots $$
Proposition \ref{shex-e} b) implies that the map from $L_{pq}$ onto the next term is injective, and Proposition \ref{sign} b) implies that $\uH^{\alpha -\xi +1}_{\GG}(S^0) \cong \bbC$ for some $C$. Proposition \ref{Cpqmap} implies that any map of Mackey functors from $R_{pq}$ to $\bbC$ is $0$. Thus  for $|\alpha|=0$ one has the short exact sequence 
$$0\to \uH^{\alpha - \xi}_{\GG}(S^0) \to \uH^\alpha_{\GG}(S^0) \to R_{pq}\to 0$$ 
and for $|\alpha|=1$, 
$$\uH^{\alpha - \xi}_{\GG}(S^0) \cong \uH^\alpha_{\GG}(S^0).$$ 
We directly conclude the first, second and last statements of b). If $|\alpha|=|\alpha^{\GG}|=0$, the short exact sequence looks like
$$0\to \bbZ \to \uH^\alpha_{\GG}(S^0) \to R_{pq}\to 0$$ 
which is realized to be an element of $\Ext_{\BB_{\GG}}^1(R_{pq}, \bbZ)$. By Proposition \ref{Cpqext}, the sequence splits proving the third statement of b). 

Lastly consider $|\alpha|=2$. We have the short exact sequence by Proposition \ref{shex-e} b)
$$0\to L_{pq} \to \uH^{\alpha-\xi}_{\GG}(S^0) \to \uH^\alpha_{\GG}(S^0)\to 0$$
In the remaining cases for b) we have $\uH^{\alpha-\xi}_{\GG}(S^0)\cong R_{pq}$ or $R_{pq}\oplus \bbZ$. The factor $L_{pq}$ maps to $R_{pq}$ (and trivially onto $\bbZ$ by Proposition \ref{Cpqmap}) the cokernel of which has been computed in the proof of Proposition \ref{sign} as $\CC_p\bZp \oplus \CC_q\bZq$. This completes the proof of the Proposition.
\end{proof}

We move on to the next result which states that the $RO(\GG)$-graded cohomology groups are a function of the dimension of the fixed points. This is proved for the group $C_p$ in Stong's calculation of $\uH^\alpha_{C_p}(S^0)$ (Proposition \ref{Lewis orbits}).  

\begin{thm}\label{ind}
For every subgroup $H$, up to isomorphism, the groups $H^\alpha_{\GG}(\GG/H)$ depends only on the four numbers $|\alpha|, |\alpha^{C_p}|, |\alpha^{C_q}|, |\alpha^{\GG}|$.
\end{thm}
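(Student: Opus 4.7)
The argument splits by subgroup $H$. For $H=e$, change of groups identifies $H^\alpha_{C_{pq}}(C_{pq}/e_+;\uA)$ with the non-equivariant $\tilde{H}^{|\alpha|}(S^0;\Z)$, a visible function of $|\alpha|$. For $H=C_p$ or $C_q$, the same change of groups reduces to $\uH^{\res_H\alpha}_H(S^0)(H/H)$, and I consult Proposition \ref{Lewis orbits}: every listed possibility for this Mackey functor, including the delicate case $\uA[d_\alpha]$, evaluates at $H/H$ to one of $\Z^{2},\Z,\Z/p, 0$, with the parameter $d_\alpha$ appearing only in the restriction map and not in the underlying abelian group. Hence the top-level group depends only on $(|\alpha|,|\alpha^H|)$.

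The substantive case is $H=C_{pq}$. My plan is to reduce any $\alpha$ to a canonical representative $\alpha_0 = a + n_0\xi + n_p\xi^p + n_q\xi^q$, where the integers $a,n_0,n_p,n_q$ are read off directly from the four fixed-point dimensions. The difference $\alpha-\alpha_0$ then has all four fixed-point dimensions equal to zero, forcing it to split into a sum of \emph{type-preserving swaps} $\xi^{j_1}-\xi^{j_2}$ in which $j_1,j_2$ share the same divisibility pattern with respect to $p$ and $q$. It therefore suffices to show that a single such swap preserves the top-level cohomology.

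To establish swap invariance, I would exploit the uniformity in $\xi^k$ of Propositions \ref{shex-e} and \ref{shex-p} (and the evident $\xi^q$-analog): every statement in those Propositions --- the isomorphisms of parts (a) and the short exact sequences of later parts --- is phrased for an arbitrary $\xi^k$ within a type. Thus $\uH^{\alpha+\xi^{j_i}}_{C_{pq}}(S^0)$ obeys identical exact sequences for $j_1$ and $j_2$ of the same type. When $\alpha$ falls in the generic regime of the (a)-parts, these sequences collapse to honest Mackey-functor isomorphisms and the swap is visibly harmless; for a general $\alpha$ I would first stabilize by adding many copies of $\xi$ to enter the generic regime, execute the swap there, and then unwind back to $\alpha$ one step at a time.

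The main obstacle is precisely this unwinding through the boundary regime, where the relations degenerate from isomorphisms to short exact sequences and one must control extension classes in order to recover an isomorphism at the level of groups. Here I would lean on the explicit case-by-case computations already carried out in Propositions \ref{sign}, \ref{calc1odd}, \ref{calc1even}, \ref{calc1b}, and \ref{calc2}: across a wide range of $\alpha$ these directly express $\tilde{H}^\alpha_{C_{pq}}(S^0)$ as a concrete function of the four fixed-point dimensions, with the needed $\Ext^1$ vanishings supplied by Proposition \ref{Cpqext}. Combining these explicit cases with the stabilization argument above covers every $\alpha\in RO(C_{pq})$, yielding the theorem.
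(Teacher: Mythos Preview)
Your approach mirrors the paper's quite closely: for $H\neq C_{pq}$ you both invoke change of groups together with Propositions \ref{AG}, \ref{G/Cp}, \ref{Lewis orbits}, and for $H=C_{pq}$ you both reduce via the exact sequences of Propositions \ref{shex-e} and \ref{shex-p} and then appeal to explicit computations. Your ``swap'' language is a repackaging of the paper's reduction to the eight sign patterns of $(a_e,a_p,a_q)$; the paper states this reduction as ``it suffices to assume $a_e,a_p,a_q$ are each $\gg 0$ or $\ll 0$'' and then verifies property $\PP$ case by case.

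There is, however, a real gap in your final sentence. The Propositions you cite --- \ref{sign}, \ref{calc1odd}, \ref{calc1even}, \ref{calc1b}, \ref{calc2} --- do not cover every asymptotic sign pattern. In particular they say nothing about the mixed-sign regimes where $|\alpha^{C_p}|$ and $|\alpha^{C_q}|$ have opposite sign: for instance $|\alpha|\gg 0$, $|\alpha^{C_p}|\ll 0$, $|\alpha^{C_q}|\gg 0$ (for any $|\alpha^{C_{pq}}|$), or $|\alpha|\ll 0$, $|\alpha^{C_p}|\ll 0$, $|\alpha^{C_q}|\gg 0$, $|\alpha^{C_{pq}}|=0$. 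The paper carries out exactly these missing computations \emph{inside} the proof of Theorem \ref{ind}, later recording them as Proposition \ref{calc2b}; they require several further applications of \eqref{ex-p} together with Propositions \ref{sus-p}, \ref{Cpqmap}, and \ref{cpzp} to identify cokernels such as $\CC_p\bZp\oplus\KK_q\bZ$ and $\CC_p\bZp\oplus\bbZp$. Your stabilize--swap--unwind scheme would run into precisely these uncomputed corners when you try to unwind through them, so the claim that ``combining these explicit cases with the stabilization argument covers every $\alpha$'' is premature. The fix is of the same flavor as the computations you already cite, but it is not free and constitutes a substantial portion of the paper's argument.
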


\begin{proof}
If $H\neq \GG$, this follows from Propositions \ref{AG}, \ref{G/Cp} and \ref{Lewis orbits}. Thus, it suffices to prove this for $H=\GG$. Given a $4$-tuple $\ua=(a_e,a_p,a_q,a_{pq})$ in which all the four numbers have the same parity, we say $\ua$ satisfies $\PP$ if for every $\alpha$ with fixed points as in $\ua$ have isomorphic values of $\tilde{H}^\alpha_{\GG}(S^0)$. We need to prove that every $\ua$ satisfies $\PP$. 

From Proposition \ref{shex-e} we see that, if we know this for those $\ua$ with $a_e\gg 0$ and $a_e\ll 0$, then we can prove this for every $\ua$. Analogously from Proposition \ref{shex-p}, we see that it suffices to assume additionally $a_p\gg 0$ or $a_p\ll 0$ and $a_q\gg 0$ or $a_q \ll 0$. This leaves us with $8$ cases determined by whether $a_e, a_p $ or $a_q$ are $\gg 0$ or $\ll 0$. The two cases where all have the same sign are dealt with in Proposition \ref{sign}. 

We divide the argument into two cases $a_e\ll 0$ and $a_e \gg 0$ starting with the first case. We note that Proposition \ref{calc1odd} implies $\PP$ for all combinations $a_e\ll 0$ odd except the case $a_{pq}=1$. We apply Proposition \ref{calc1b} b) with $|\alpha|<0$ and $|\alpha^{\GG}|=1$ together with Proposition \ref{shex-p} c) to deduce property $\PP$ for $a_e\ll 0$ odd and $a_{pq}=1$. Proposition \ref{calc1even} implies property $\PP$ for all possible combinations for $a_e\ll 0$ even except the one where $a_{pq}=0$. Proposition \ref{sign} implies $\PP$ for $a_e, a_p, a_q <0$ and $a_{pq}=0$ and Proposition \ref{calc2} implies $\PP$ for $a_e\ll 0$, $a_p, a_q > 0$ and $a_{pq}=0$. Using the symmetry between $p$ and $q$, the only remaining case to prove here is $a_e\ll 0$, $a_p < 0$, $a_q>0$ and $a_{pq}=0$. Proposition \ref{shex-p} b) and Proposition \ref{calc2} a) implies that if $|\alpha| <0, |\alpha^{C_p}|=0, |\alpha^{C_q}|>0, |\alpha^{\GG}|=0$, $\tilde{H}^\alpha_{\GG}(S^0) \cong \Z\oplus \Z$. This computation and  Proposition \ref{shex-p} e) implies that   if $|\alpha| <0, |\alpha^{C_p}|=-2, |\alpha^{C_q}|>0, |\alpha^{\GG}|=0$, $\tilde{H}^\alpha_{\GG}(S^0) \cong \Z$. This can now be used to prove $\PP$ for $a_e \ll 0$, $a_p\ll 0$, $a_q\gg 0$, $a_{pq}=0$. 

The remaining cases are $a_e \gg 0$. Proposition \ref{calc2} b) implies $\PP$ whenever $a_e \gg 0$, $a_p \ll 0 $ and $a_q \ll 0$. The symmetry between $p$ and $q$ leaves the only remaining case as $a_e \gg 0$, $a_p \ll 0$ and $a_q \gg 0$. Suppose that $a_e$ is even. In view of Proposition \ref{shex-p} d), it suffices to prove $\PP$ for $a_e\gg 0$ $a_p =0$ and $a_q \gg 0$. Proposition \ref{shex-p} b) and Proposition \ref{sign} a) implies $\PP$ for all cases other than $a_{pq} <0$ even. 

We know that if $|\alpha|>0$, $|\alpha^{C_p}|<0$, $|\alpha^{C_q}|=-2$ and $|\alpha^{\GG}|< 0$, $\uH^\alpha_{\GG}(S^0) \cong \CC_p\bZp  \oplus \CC_q \bZq$. Adding a factor of $\xi^q$ gives the exact sequence \eqref{ex-p}
$$  0 \to \CC_q \bZq \to  \CC_p\bZp  \oplus \CC_q \bZq \to \uH^{\alpha+\xi^q}_{\GG}(S^0) \to \KK_q\bZ \to 0$$
The term to the left of $\CC_q\bZq$ and the term to the right of $\KK_q\bZ$ are $0$ by Proposition \ref{calc2} a).  One readily deduces that $\uH^{\alpha+\xi^q}_{\GG}(S^0) \cong \CC_p\bZp \oplus \KK_q\bZ$ from Proposition \ref{cpzp}. Adding a further factor of $\xi^q$ induces the short exact sequence (Proposition \ref{shex-p} b))
$$0\to \CC_q \bZ \to \CC_p\bZp \oplus \KK_q\bZ \to \uH^{\alpha+2\xi^q}_{\GG}(S^0) \to 0$$
Any map from $\CC_q\bZ \to \CC_p\bZp$ is zero by Proposition \ref{Cpqmap} and the cokernel of the map $\CC_q\bZ \to \KK_q \bZ$ is $\bbZp$. Thus, we obtain $\uH^{\alpha+2\xi^q}_{\GG}(S^0) \cong \CC_p\bZp \oplus \bbZp$. Therefore, if $\alpha$ is even with $|\alpha|>0$, $|\alpha^{C_p}|<0$, $|\alpha^{C_q}|>0$ and $|\alpha^{\GG}|< 0$, then, $\uH^\alpha_{\GG}(S^0)\cong  \CC_p\bZp \oplus \bbZp$. This completes the verification of $\PP$ for $a_e\gg 0$, $a_p\ll 0$, $a_q \gg 0$ and $a_{pq}< 0$ with all four integers even.

Suppose $a_e$ is odd. If $a_{pq} \leq 1$, Proposition \ref{calc2} b) and Proposition \ref{shex-p} b) allows us to deduce $\PP$ in this case by proving that the cohomology group is actually $0$. Thus, assume $a_{pq} \geq 3$. We start with $\alpha$ an odd element with $|\alpha|\gg 0 , |\alpha^{C_q}|\geq 3$, $|\alpha^{\GG}|\geq 3$ and $|\alpha^{C_p}| =1$ and subtract $\xi^p$ to obtain \eqref{ex-p}
$$       \cdots  \to \uH^{\alpha -1}_{\GG}(S^0)    \to     \KK_p \bZ \to  \uH^{\alpha - \xi^p}_{\GG}(S^0)  \to 0$$
The Mackey functor $\uH^{\alpha -1}_{\GG}(S^0) \cong \CC_p\bZ$ by Proposition \ref{shex-p} b). It follows that $\uH^{\alpha - \xi^p}_{\GG}(S^0) \cong \bbZq$. Thus we deduce $\PP$ for $a_e \gg 0 $ odd, $a_p \ll 0$, $a_q \gg 0$ and $a_{pq} \geq 3$ which was the remaining case. 
\end{proof}

We summarize some computations carried out in the proof above in the Proposition below. 
\begin{prop}\label{calc2b}
1) If $|\alpha|>0$, $|\alpha^{C_p}|<0$, $|\alpha^{C_q}|=0$ and $|\alpha^{\GG}|< 0$, then $\uH^{\alpha}_{\GG}(S^0) \cong \CC_p\bZp \oplus \KK_q\bZ$. \\
2) If $\alpha$ is even with $|\alpha|>0$, $|\alpha^{C_p}|<0$, $|\alpha^{C_q}|>0$ and $|\alpha^{\GG}|< 0$, then $\uH^{\alpha}_{\GG}(S^0) \cong \CC_p\bZp \oplus \bbZp$. \\
3) If $\alpha$ is odd with $|\alpha|>0$, $|\alpha^{C_p}|<0$, $|\alpha^{C_q}|>0$ and $|\alpha^{\GG}|\geq 3$, then $\uH^{\alpha}_{\GG}(S^0) \cong  \bbZq$.
\end{prop}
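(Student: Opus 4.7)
The plan is to compute all three Mackey functors by starting from values already established in Proposition \ref{sign} or Proposition \ref{calc2} and propagating through the long exact sequence \eqref{ex-p} (or its $p\leftrightarrow q$ analogue) associated to the cofibration $S(\xi^k)_+ \to S^0 \to S^{\xi^k}$. The values of $\uH^\ast_{C_{pq}}(S(\xi^k)_+)$ supplied by Corollary \ref{p-sphere} cut the long exact sequence into short pieces; Proposition \ref{sus-p} then determines the boundary and restriction maps at the $C_{pq}/C_p$ or $C_{pq}/C_q$ levels; and Proposition \ref{cpzp} is used to upgrade a level-wise splitting into a splitting of Mackey functors.

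For parts (1) and (2), I would add copies of $\xi^q$ to the index. Beginning from the case $|\alpha|>0$, $|\alpha^{C_p}|<0$, $|\alpha^{C_q}|=-2$, $|\alpha^{C_{pq}}|<0$ of Proposition \ref{calc2} b), where $\uH^\alpha_{C_{pq}}(S^0) \cong \CC_p\bZp \oplus \CC_q\bZq$, the $q$-analogue of \eqref{ex-p} shifted by $\xi^q$ produces the four-term sequence
\begin{equation*}
0 \to \CC_q\bZq \to \CC_p\bZp \oplus \CC_q\bZq \to \uH^{\alpha+\xi^q}_{C_{pq}}(S^0) \to \KK_q\bZ \to 0,
\end{equation*}
the flanking $\uH^\bullet_{C_{pq}}(S^0)$ terms vanishing by Proposition \ref{calc2} b). The $q$-version of Proposition \ref{sus-p} shows the leftmost arrow is an isomorphism at $C_{pq}/C_q$, so $\CC_q\bZq$ splits off; what remains is a short exact sequence $0\to \CC_p\bZp \to \uH^{\alpha+\xi^q}_{C_{pq}}(S^0) \to \KK_q\bZ \to 0$ in which $\CC_p\bZp$ injects isomorphically at $C_{pq}/C_p$, and hence splits by Proposition \ref{cpzp} a). This yields part (1). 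Adding a further $\xi^q$ and invoking Proposition \ref{shex-p} b) (for $q$) gives $0 \to \CC_q\bZ \to \CC_p\bZp \oplus \KK_q\bZ \to \uH^{\alpha+2\xi^q}_{C_{pq}}(S^0) \to 0$; the component $\CC_q\bZ\to \CC_p\bZp$ vanishes by Proposition \ref{Cpqmap}, and the cokernel of $\CC_q\bZ\to \KK_q\bZ$ is $\bbZp$ (as already computed in the proof of Proposition \ref{sign}), giving $\CC_p\bZp \oplus \bbZp$. The $q$-analogue of Proposition \ref{shex-p} a) then propagates this value to all $|\alpha^{C_q}|>0$, proving part (2).

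For part (3), I instead subtract a copy of $\xi^p$. Choose $\beta$ odd with $|\beta|\gg 0$, $|\beta^{C_p}|=1$, $|\beta^{C_q}|\geq 3$, $|\beta^{C_{pq}}|\geq 3$; since all fixed point dimensions of $\beta$ are positive, Proposition \ref{sign} a) gives $\uH^\beta_{C_{pq}}(S^0)=0$, so the relevant portion of \eqref{ex-p} reads
\begin{equation*}
\uH^{\beta-1}_{C_{pq}}(S^0) \to \uH^{\beta-1}_{C_{pq}}(S(\xi^p)_+) \to \uH^{\beta-\xi^p}_{C_{pq}}(S^0) \to 0.
\end{equation*}
Proposition \ref{shex-p} b), combined with the vanishing of $\uH^{\beta-1+\xi^p}_{C_{pq}}(S^0)$ from another application of Proposition \ref{sign} a), identifies $\uH^{\beta-1}_{C_{pq}}(S^0) \cong \CC_p\bZ$; Corollary \ref{p-sphere} identifies the middle term as $\KK_p\bZ$; and Proposition \ref{sus-p} b) shows the map $\CC_p\bZ\to\KK_p\bZ$ is an isomorphism at $C_{pq}/C_p$. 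Mackey functor compatibility with the restriction then forces this map to be multiplication by $\pm q$ at $C_{pq}/C_{pq}$, so its cokernel is $\bbZq$; since $\beta-\xi^p$ lies in the range of (3), this completes the case.

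The main technical obstacle is the book-keeping required to apply Proposition \ref{sus-p}: in each instance one must identify which of the two summands $\CC_p\uH^{\bullet-1}_{C_p}(S^0)$, $\KK_p\uH^\bullet_{C_p}(S^0)$ of $\uH^\bullet_{C_{pq}}(S(\xi^p)_+)$ (from Proposition \ref{G/Cp}) is nonzero, since the two are treated asymmetrically by Proposition \ref{sus-p}. That determines whether the level-wise map is zero or an isomorphism, and thus whether Proposition \ref{cpzp} can be invoked to produce the Mackey functor splittings that ultimately pin down the direct sum decompositions in (1) and (2).
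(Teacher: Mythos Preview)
Your proposal is correct and follows essentially the same route as the paper: these three statements are extracted verbatim from computations carried out inside the proof of Theorem \ref{ind}, and you reproduce those computations step for step (starting from Proposition \ref{calc2} b) and adding $\xi^q$ for (1)--(2), starting from Proposition \ref{sign} a) with $|\beta^{C_p}|=1$ and subtracting $\xi^p$ for (3)). Your citation of Proposition \ref{calc2} b) for the vanishing of the flanking terms in (1) is in fact the correct reference; the paper's text cites part a) there, which appears to be a slip.
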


\section{Computations of $\uH^\alpha_{\GG}(S^0)$}\label{comp2}
In this section, we complete the rest of the computations of the cohomology Mackey functors $\uH^\alpha_{\GG}(S^0)$  left over from the previous sections. 

Let us start with $\alpha$ odd. If $|\alpha|<0$ all the computations except the case $|\alpha^{\GG}|=1$ is done in Propositions \ref{sign} and \ref{calc1odd}. Further using Proposition \ref{calc1b}, the only case remaining is either $|\alpha^{C_q}|>1$ or $|\alpha^{C_q}|>1$. For $|\alpha|>0$, the case $\{ |\alpha^{C_p}|>0, |\alpha^{C_q}|>0\}$ is covered in Proposition \ref{sign} and the case  $\{ |\alpha^{C_p}|<0, |\alpha^{C_q}|<0\}$ is covered in Proposition \ref{calc2} b).  Moreover, Proposition \ref{calc2b} computes the case $|\alpha^{C_p}|<0$, $|\alpha^{C_q}|>0$ and $|\alpha^{\GG}| >1$. The remaining cases are dealt with in the Proposition below. 
\begin{prop} \label{calc3}
Let $\alpha\in RO(\GG)$ be an odd element. \\
a) Suppose $|\alpha| < 0$ and $|\alpha^{\GG}|=1$. Then 
$$\uH^{\alpha}_{\GG}(S^0) = 
\begin{cases} 
\KK_p \bZp. & \mbox{if}~ |\alpha^{C_p}| > 1 ~\mbox{and}~ |\alpha^{C_{q}}|\leq 1 \\
\KK_p\bZp \oplus \KK_q \bZq. &\mbox{if}~ |\alpha^{C_p}|> 1 ~\mbox{and}~ |\alpha^{C_{q}}|>1 .
\end{cases}
$$
b) If $|\alpha|>0$, $|\alpha^{C_p}|<0$, $|\alpha^{C_{q}}|>0$ and $|\alpha^{\GG}|\leq 1$, then $\uH^{\alpha}_{\GG}(S^0) = 0.$
\end{prop}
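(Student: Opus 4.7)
The plan is to handle each of the two parts by starting from a known computation and transferring it through the long exact sequences \eqref{ex-e} and \eqref{ex-p}, with the Mackey-functor arithmetic of Section \ref{mack} (especially Propositions \ref{Cpqmap}, \ref{Cpqext}, \ref{cpzp}) controlling the extensions and maps.

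For part (a), fix $\alpha_0$ odd with $|\alpha_0|\ll 0$ and $|\alpha_0^{C_p}|=|\alpha_0^{C_q}|=|\alpha_0^{C_{pq}}|=1$, so that Proposition \ref{calc1b}(b) supplies $\uH^{\alpha_0}_{C_{pq}}(S^0)=0$. Proposition \ref{shex-p}(c) then gives $\uH^{\alpha_0+\xi^p}_{C_{pq}}(S^0)\cong \KK_p\bZp$, after which Propositions \ref{shex-p}(a) and \ref{shex-e}(a) propagate the Mackey functor to every odd $|\alpha^{C_p}|>1$ and every odd $|\alpha|<0$, settling case (i) subcase $|\alpha^{C_q}|=1$. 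For subcase $|\alpha^{C_q}|<0$, take $\beta$ with the just-computed $\uH^\beta\cong \KK_p\bZp$ and $|\beta^{C_q}|=1$ and apply \eqref{ex-p} for the prime $q$. Corollary \ref{p-sphere} identifies $\uH^{\beta-1}_{C_{pq}}(S(\xi^q)_+)=\KK_q\bZ$ and $\uH^\beta_{C_{pq}}(S(\xi^q)_+)=\CC_q\bZ$; since $\KK_p\bZp$ and $\CC_q\bZ$ overlap only at $C_{pq}/C_{pq}$, where $\Hom(\Z/p,\Z)=0$, the map $\uH^\beta\to \CC_q\bZ$ vanishes. The exact sequence collapses to
\[
\uH^{\beta-1}_{C_{pq}}(S^0)\to \KK_q\bZ\to \uH^{\beta-\xi^q}_{C_{pq}}(S^0)\to \KK_p\bZp\to 0,
\]
and a comparison of $\uH^{\beta-1}_{C_{pq}}(S^0)$ against $\KK_q\bZ$ at the $C_{pq}/C_q$ and $C_{pq}/C_{pq}$ orbits (using Propositions \ref{Lewis orbits} and \ref{G/Cp} to pin down the restrictions) shows the leftmost map to be surjective, yielding $\uH^{\beta-\xi^q}_{C_{pq}}(S^0)\cong \KK_p\bZp$. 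Proposition \ref{shex-p}(a) for the prime $q$ then extends the result to every odd $|\alpha^{C_q}|<0$. Case (ii) is obtained from case (i) by adding a copy of $\xi^q$: Proposition \ref{shex-p}(c) for $q$ yields $\uH^{\beta+\xi^q}_{C_{pq}}(S^0)\cong \KK_p\bZp\oplus \KK_q\bZq$, with the splitting ensured by Proposition \ref{cpzp}(b), and further applications of Propositions \ref{shex-p}(a) and \ref{shex-e}(a) cover the full range.

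For part (b), the first observation is that Propositions \ref{Lewis orbits} and \ref{G/Cp} imply $\Phi_p^\ast\uH^\alpha_{C_{pq}}(S^0)=0$ and $\Phi_q^\ast\uH^\alpha_{C_{pq}}(S^0)=0$ under the given hypotheses (each falls into the ``Otherwise'' clause of Proposition \ref{Lewis orbits}), so $\uH^\alpha_{C_{pq}}(S^0)$ is concentrated at $C_{pq}/C_{pq}$ and it suffices to show $\tilde{H}^\alpha_{C_{pq}}(S^0)=0$. Theorem \ref{ind} allows me to pick any representative of the fixed-point tuple; using Propositions \ref{shex-p}(a) (for both $p$ and $q$) and \ref{shex-e}(a) I reduce to $|\alpha|=3$, $|\alpha^{C_p}|=-1$, $|\alpha^{C_q}|=1$, with $|\alpha^{C_{pq}}|\in\{1,-1\}$. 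Consider \eqref{ex-p} for the prime $p$ indexed at $\alpha+\xi^p$: the class $\alpha+\xi^p$ has $|\alpha+\xi^p|=5$ odd, $|(\alpha+\xi^p)^{C_p}|=|(\alpha+\xi^p)^{C_q}|=1$, and two applications of Proposition \ref{shex-e}(a) identify $\uH^{\alpha+\xi^p}_{C_{pq}}(S^0)$ with $\uH^\gamma_{C_{pq}}(S^0)$ for $\gamma$ odd with all $|\gamma^H|\leq 1$, so Proposition \ref{calc1b}(b) gives $\uH^\gamma=0$. Substituting this together with the flanking cohomologies from Corollary \ref{p-sphere} into the exact sequence, and combining with the parallel argument for the prime $q$, forces $\tilde{H}^\alpha_{C_{pq}}(S^0)=0$.

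The principal obstacle is identifying the maps in the long exact sequences as split, injective, or surjective; the Hom and Ext computations of Propositions \ref{Cpqmap}, \ref{Cpqext}, and \ref{cpzp} are designed precisely for this. The subtlest points are the surjectivity of $\uH^{\beta-1}_{C_{pq}}(S^0)\to \KK_q\bZ$ in the descent of case (i) subcase $|\alpha^{C_q}|<0$, and the simultaneous use of both $p$- and $q$-cofibre sequences in part (b) to rule out any residual top-orbit contribution.
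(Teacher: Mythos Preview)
Your overall strategy is sound, but you take unnecessarily circuitous routes and leave gaps at exactly the places you flag as ``subtle.'' The paper's argument is much more direct and avoids those difficulties entirely.

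For part (a), the point you miss is that Proposition~\ref{calc1b}(b) already applies at $|\alpha^{C_p}|=1$ for \emph{every} value $|\alpha^{C_q}|\leq 1$ (not just $|\alpha^{C_q}|=1$): all four fixed-point dimensions are then $\leq 1$, so $\uH^{\alpha}_{C_{pq}}(S^0)=0$. A single application of Proposition~\ref{shex-p}(c) then gives $\KK_p\bZp$ at $|\alpha^{C_p}|=3$, and Proposition~\ref{shex-p}(a) propagates further. There is no need to descend from $|\alpha^{C_q}|=1$ to $|\alpha^{C_q}|<0$ via the $q$-sequence. Your descent requires the surjectivity of $\pi_{\xi^q}:\uH^{\beta-1}_{C_{pq}}(S^0)\to\KK_q\bZ$, but your justification is incomplete: Proposition~\ref{sus-p} only controls the map at $C_{pq}/C_q$, and since the transfer in $\KK_q\bZ$ from $C_q$ to $C_{pq}$ is multiplication by $p$, this does not force surjectivity at $C_{pq}/C_{pq}$. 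To close this you would need to compute $\uH^{\beta-1}_{C_{pq}}(S^0)$ with $|\beta-1|<0$ even, $|(\beta-1)^{C_p}|>0$, $|(\beta-1)^{C_q}|=|(\beta-1)^{C_{pq}}|=0$, including its restriction map---which is not among the cases already established. Case~(ii) then follows from case~(i) by one more application of Proposition~\ref{shex-p}(c), as you say.

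For part (b), the same simplification applies: reduce via Propositions~\ref{shex-e}(a) and~\ref{shex-p}(a) all the way down to $|\alpha|=|\alpha^{C_q}|=1$ (not $|\alpha|=3$). With $|\alpha^{C_p}|<0$ and $|\alpha^{C_{pq}}|\leq 1$ this again has all fixed points $\leq 1$, so Proposition~\ref{calc1b}(b) gives $0$ immediately, and you propagate back up. Your route through $|\alpha|=3$ and the sequence indexed at $\alpha+\xi^p$ leaves you needing to identify the map $\uH^{\alpha+\xi^p-1}_{C_{pq}}(S^0)\to\KK_p\bZ$, and the ``parallel argument for the prime $q$'' you invoke is not spelled out; in fact the required input $\uH^{\alpha+\xi^p-1}_{C_{pq}}(S^0)$ (with $|\cdot^{C_p}|=|\cdot^{C_q}|=0$) is only computed later in the paper (Proposition~\ref{calc3evpos}), so this would be circular.
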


\begin{proof}
Note that a) follows from the Proposition \ref{calc1b} b) and Proposition \ref{shex-p} c). The statement b) is deduced by applying Proposition \ref{calc1b} b) for $|\alpha|=|\alpha^{C_q}|=1$ and then adding copies of $\xi$ and $\xi^q$ using Propositions \ref{shex-e} a) and \ref{shex-p} a).
\end{proof}

Next turn to the case $\alpha$ even. Among all such $\alpha$, we first consider $|\alpha|\neq 0$ and $|\alpha^{\GG}|\neq 0$. We note from Propositions \ref{sign} and \ref{calc1even} that all cases with $|\alpha|<0$ and $|\alpha^{\GG}|\neq 0$ have been computed except $|\alpha|<0, |\alpha^{\GG}|<0$ and $|\alpha^{C_p}|=|\alpha^{C_q}|=0$. We have 
\begin{prop}\label{calc3evneg}
Let $\alpha \in RO(\GG)$ be even with $|\alpha| < 0$, $|\alpha^{\GG}| < 0$, $|\alpha^{C_p}|=|\alpha^{C_q}|=0$ . Then, $ \uH^{\alpha}_{\GG}(S^0) = \KK_p\bZ \oplus \KK_q \bZ $.
\end{prop}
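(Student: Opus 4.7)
The plan is to apply the exact sequence \eqref{ex-p} associated to the representation $\xi^q$ at the index $\alpha$, using that $\alpha-\xi^q$ has fixed point dimensions already computed in Section \ref{comp1}. Specifically, $\alpha-\xi^q$ is even with $|\alpha-\xi^q|<0$, $|(\alpha-\xi^q)^{C_p}|=0$, $|(\alpha-\xi^q)^{C_q}|=-2$, and $|(\alpha-\xi^q)^{C_{pq}}|<0$, so Proposition \ref{calc1even} a) yields $\uH^{\alpha-\xi^q}_{C_{pq}}(S^0)\cong \KK_p\bZ$. The term $\uH^{\alpha-\xi^q+1}_{C_{pq}}(S^0)$ vanishes by Proposition \ref{calc1odd} a), since the resulting odd index has $|\cdot^{C_p}|=1$, $|\cdot^{C_q}|=-1$, $|\cdot^{C_{pq}}|<0$. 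The $q$-analog of Proposition \ref{G/Cp} c) combined with Proposition \ref{Lewis orbits} restricted to $C_q$ shows $\uH^\alpha_{C_{pq}}(S(\xi^q)_+)\cong \KK_q\bZ$ (the nonzero $C_q$-contribution coming from $\uH^\alpha_{C_q}(S^0)\cong \bZ$) and $\uH^{\alpha-1}_{C_{pq}}(S(\xi^q)_+)=0$. The long exact sequence therefore collapses to
$$0\to \KK_p\bZ \to \uH^\alpha_{C_{pq}}(S^0) \to \KK_q\bZ \to 0.$$

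To conclude, I will show this sequence splits by proving $\Ext^1_{\BB_{C_{pq}}}(\KK_q\bZ,\KK_p\bZ)=0$. The strategy mirrors Proposition \ref{Cpqext} b) with the roles of $p$ and $q$ exchanged: apply $\Hom(-,\KK_p\bZ)$ to the short exact sequence $0\to \bbZ\to\cA_q\bZ\to\KK_q\bZ\to 0$ (using $\QQ_q\bZ\cong\bbZ$). In the resulting long exact sequence, $\MM_{C_{pq}}(\bbZ,\KK_p\bZ)=0$, since any such map, determined by an integer $y$ at $C_{pq}/C_{pq}$, must satisfy $y=\res^{C_{pq}}_{C_p}(y)=0$ because the restriction is the identity in $\KK_p\bZ$ but zero in $\bbZ$. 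The adjunction of Proposition \ref{apadj} identifies $\Ext^1_{\BB_{C_{pq}}}(\cA_q\bZ,\KK_p\bZ)\cong \Ext^1_{\BB_{C_q}}(\bZ,\rho_q^\ast\KK_p\bZ)$, and a direct computation using $\Phi_p^\ast\KK_p\bZ\cong\bZ$ and the tensor-product description of $\KK_p\bZ$ shows $\rho_q^\ast\KK_p\bZ\cong R_q$; the $q$-analog of the argument from Proposition \ref{Cpmack} b) applied to $0\to L_q\to\uA_q\to\bZ\to 0$ then gives $\Ext^1_{\BB_{C_q}}(\bZ,R_q)=0$. Combined with the vanishing of $\MM_{C_{pq}}(\bbZ,\KK_p\bZ)$, this forces $\Ext^1_{\BB_{C_{pq}}}(\KK_q\bZ,\KK_p\bZ)=0$.

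The main obstacle I anticipate is the $\Ext$-vanishing step, since $\Ext^1_{\BB_{C_{pq}}}(\KK_q\bZ,\KK_p\bZ)$ is not one of the cases listed in Proposition \ref{Cpqext} (the target is not of the form $\bbA$). Reducing via the $\cA_q$-adjunction brings it to a visible $C_q$-level calculation parallel to Proposition \ref{Cpmack} b), so the only piece requiring direct verification is the identification $\rho_q^\ast\KK_p\bZ\cong R_q$.
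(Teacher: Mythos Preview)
Your derivation of the short exact sequence is correct and is essentially the paper's argument with $p$ and $q$ interchanged: the paper uses the cofibre sequence for $\xi^p$ (citing Proposition~\ref{calc1even} for $\uH^{\alpha-\xi^p}_{C_{pq}}(S^0)\cong\KK_q\bZ$ and Proposition~\ref{calc1b} for the vanishing of $\uH^{\alpha-\xi^p+1}_{C_{pq}}(S^0)$) to obtain $0\to\KK_q\bZ\to\uH^{\alpha}_{C_{pq}}(S^0)\to\KK_p\bZ\to 0$, while you obtain the mirror sequence from $\xi^q$.

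Where you diverge is in the splitting. The paper does not compute any Ext group; it simply observes that the $p\leftrightarrow q$ symmetry produces \emph{both} short exact sequences, and having both forces the split. (Concretely: the composite $\KK_p\bZ\hookrightarrow\uH^\alpha\twoheadrightarrow\KK_p\bZ$ is an isomorphism at level $C_{pq}/C_p$, and since $\res^{C_{pq}}_{C_p}$ in $\KK_p\bZ$ is an isomorphism, the composite is an isomorphism everywhere.) This symmetry trick is used repeatedly throughout the paper (e.g.\ \eqref{xipq+}, item~6 in the proof of Proposition~\ref{sign}, and Propositions~\ref{calc3evpos} and~\ref{calc3evneg} themselves), so it is worth internalising.

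Your Ext computation is also correct---the identification $\rho_q^\ast\KK_p\bZ\cong R_q$ holds, and the resolution $0\to L_q\to\uA_q\to\bZ\to 0$ shows $\Ext^1_{\BB_{C_q}}(\bZ,R_q)=0$ because $\MM_{C_q}(\uA_q,R_q)\to\MM_{C_q}(L_q,R_q)$ is an isomorphism $\Z\to\Z$. But note this last step is not literally ``the argument from Proposition~\ref{Cpmack} b)'': there the target is $\bA$ and one uses $\MM_{C_p}(L_p,\bA)=0$, whereas here $\MM_{C_q}(L_q,R_q)\cong\Z$ is nonzero and you must check surjectivity directly. Your route is sound but longer; the symmetry argument is the intended shortcut.
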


\begin{proof}
We note $\uH^{\alpha -\xi^p}_{\GG}(S^0)\cong \KK_q\bZ$ from Proposition \ref{calc1even} and $\uH^{\alpha -\xi^p+1}_{\GG}(S^0)=0$ from Proposition \ref{calc1b},  so that the exact sequence \eqref{ex-p} is of the form
$$0 \to \KK_q \bZ  \to \uH^{\alpha}_{\GG}(S^0) \to \KK_p \bZ \to 0.$$
which splits because from the symmetry of $p$ and $q$ we obtain the opposite short exact sequence as well. 
\end{proof}

Now let $|\alpha|>0$. We note that in Propositions \ref{sign} and \ref{calc2} we have made all computations when $|\alpha^{C_p}|$ and $|\alpha^{C_q}|$ are both non-zero of the same sign. Further, in Proposition \ref{calc2b}, computations are done for $|\alpha^{C_p}|\geq 0$, $|\alpha^{C_q} | < 0$ and $|\alpha^{\GG}|<0$. The rest of the cases are computed in the Proposition below.   
\begin{prop}\label{calc3evpos} Let $\alpha$ be even with $|\alpha|>0$.\\ 
a) Suppose $|\alpha^{\GG}|>0$.  Then, we have
$$\uH^{\alpha}_{\GG}(S^0) = 
\begin{cases} 
\CC_q \bZ  & \mbox{if}~ |\alpha^{C_p}| >0 ~\mbox{and}~ |\alpha^{C_{q}}| =0 \\
\CC_p \bZ  \oplus \CC_q \bZ  & \mbox{if}~|\alpha^{C_p}| =0 ~\mbox{and}~ |\alpha^{C_{q}}| =0  \\
\CC_q \bZ \oplus \CC_p \bZp  & \mbox{if}~|\alpha^{C_p}|<0 ~\mbox{and}~ |\alpha^{C_{q}}| =0 \\
\CC_p \bZp  &\mbox{if}~ |\alpha^{C_p}| <0  ~\mbox{and}~ |\alpha^{C_{q}}|>0.
\\ 
\end{cases}
$$
b)  Suppose $|\alpha^{\GG}|<0$. Then, we have 
$$\uH^{\alpha}_{\GG}(S^0) = 
\begin{cases}
\KK_p \bZ \oplus \KK_q \bZ & \mbox{if}~ |\alpha^{C_p}| =0 ~\mbox{and}~ |\alpha^{C_{q}}| =0 \\
\bbZq \oplus \KK_q \bZ & \mbox{if}~|\alpha^{C_p}|>0 ~\mbox{and}~ |\alpha^{C_{q}}| =0 . \\
\end{cases}
$$
\end{prop}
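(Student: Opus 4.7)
The plan is to apply the cofibre sequence \eqref{ex-p} for $\xi^p$ and $\xi^q$ together with the short exact sequences from Proposition~\ref{shex-p}, reducing each subcase to previously established computations (notably Propositions~\ref{sign}, \ref{calc1even}, and \ref{calc2b}). Vanishing of certain Ext groups (Proposition~\ref{Cpqext}) and the splitting criterion of Proposition~\ref{cpzp} will be used to promote short exact sequences to direct-sum decompositions of Mackey functors.

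For part (a) I would handle the four subcases in the order 1, 2, 4, 3. For subcase 1, apply Proposition~\ref{shex-p}~b) with $\xi^q$ to obtain
\[ 0 \to \CC_q\bZ \to \uH^\alpha_{C_{pq}}(S^0) \to \uH^{\alpha+\xi^q}_{C_{pq}}(S^0) \to 0, \]
and note that $\alpha+\xi^q$ has all fixed-point dimensions positive, so the right-hand term is zero by Proposition~\ref{sign}~a); this forces $\uH^\alpha = \CC_q\bZ$. Subcase 2 is similar, except now $\alpha+\xi^q$ falls into the symmetric (with $p \leftrightarrow q$) subcase 1 giving $\CC_p\bZ$, and the resulting extension splits since $\Ext^1_{\BB_{C_{pq}}}(\CC_p\bZ,\CC_q\bZ)=0$ by Proposition~\ref{Cpqext}~a). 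For subcase 4, reduce via Proposition~\ref{shex-p}~a) to the case $|\alpha^{C_p}|=-2$ and apply Proposition~\ref{shex-p}~d); the partner $\alpha+\xi^p$ falls into the symmetric subcase 1 with $\tilde H^{\alpha+\xi^p} = \Z$, forcing $A=0$ and $\tilde H^\alpha = \Z/p$. Combining this with the injection of $\CC_p\bZp$ into $\uH^\alpha$ coming from \eqref{ex-p} (iso at $C_{pq}/C_p$ by Proposition~\ref{sus-p}) and splitting via Proposition~\ref{cpzp}~a) identifies $\uH^\alpha = \CC_p\bZp$. Finally, subcase 3 follows by another application of Proposition~\ref{shex-p}~b) with $\xi^q$, reducing to subcase 4; the resulting sequence splits by Proposition~\ref{Cpqext}~a).

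Part (b) is more delicate because the two subcases appear to be circularly related through Proposition~\ref{shex-p}~b). I would break the circularity by computing (b1) directly, via Proposition~\ref{shex-p}~d) applied to $\beta=\alpha-\xi^p$, whose fixed-point dimensions satisfy the hypotheses of Proposition~\ref{calc2b}~1), giving $\uH^\beta=\CC_p\bZp\oplus\KK_q\bZ$, whence $\tilde H^\beta = \Z\oplus\Z/p$; extracting $A=\Z$ yields $\tilde H^\alpha=\Z\oplus\Z$. The values at $C_{pq}/C_p$, $C_{pq}/C_q$, $C_{pq}/e$ are pinned down via Proposition~\ref{G/Cp}~a) together with Proposition~\ref{Lewis orbits}, giving $\Z$, $\Z$, $0$ respectively. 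To identify the full Mackey functor as $\KK_p\bZ\oplus \KK_q\bZ$, I would use \eqref{ex-p} and its symmetric $\xi^q$-version in conjunction with Proposition~\ref{sus-p} to exhibit explicit split summands; the remaining piece, being zero at every orbit, must vanish. With (b1) settled, subcase (b2) follows by applying Proposition~\ref{shex-p}~b) for $\xi^p$ backward from an $\alpha$ in (b2) reduced via Proposition~\ref{shex-p}~a) to $|\alpha^{C_p}|=2$, which yields
\[ 0 \to \CC_p\bZ \to \KK_p\bZ \oplus \KK_q\bZ \to \uH^\alpha_{C_{pq}}(S^0) \to 0. \]
By Proposition~\ref{sus-p} the connecting map is an isomorphism at $C_{pq}/C_p$ onto the first factor and zero onto the second (the latter also by Proposition~\ref{Cpqmap}); compatibility with the restrictions in $\CC_p\bZ$ and $\KK_p\bZ$ forces it to be multiplication by $q$ at $C_{pq}/C_{pq}$, and the cokernel is then $\bbZq \oplus \KK_q\bZ$.

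The hard part will be the precise identification of the Mackey functor structure in subcase (b1): the group-level data $(0,\Z,\Z,\Z\oplus\Z)$ does not uniquely determine the Mackey functor, and the splitting into $\KK_p\bZ \oplus \KK_q\bZ$ is non-tautological. Resolving this requires careful bookkeeping of the connecting maps produced by the two cofibre sequences (controlled at the orbits $C_{pq}/C_p$ and $C_{pq}/C_q$ via Proposition~\ref{sus-p}) and then invoking Proposition~\ref{cpzp}, its symmetric analogue for $\xi^q$, and the vanishing of Hom between the relevant summands (Proposition~\ref{Cpqmap}) to promote these to Mackey-functor-level splittings.
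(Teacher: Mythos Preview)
Your proposal is correct, and for part (a) and subcase (b2) it runs essentially parallel to the paper's argument, with only cosmetic differences (you invoke Proposition~\ref{Cpqext}~a) to split the sequence in subcase~(a2), whereas the paper simply notes that the $p\leftrightarrow q$ symmetry produces the opposite short exact sequence and hence forces a splitting; you treat subcases~(a3) and~(a4) via $\xi^q$ and $\xi^p$ respectively, while the paper does both via $\xi^p$).

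The one place where your route genuinely diverges is subcase~(b1). You compute the groups level by level via Proposition~\ref{shex-p}~d) and Proposition~\ref{G/Cp}~a), and then work to assemble the Mackey functor structure --- the step you correctly flag as ``the hard part''. The paper bypasses this entirely: it uses the full long exact sequence~\eqref{ex-p} at $\alpha$ (not just the packaged statements of Proposition~\ref{shex-p}), inputting $\uH^{\alpha-\xi^p}_{C_{pq}}(S^0)\cong \CC_p\bZp\oplus\KK_q\bZ$ from Proposition~\ref{calc2b}~1) together with the values $\uH^{\alpha-1}_{C_{pq}}(S(\xi^p)_+)\cong\CC_p\bZp$ and $\uH^{\alpha}_{C_{pq}}(S(\xi^p)_+)\cong\KK_p\bZ$ from Corollary~\ref{p-sphere}. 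Since the map $\CC_p\bZp\to\KK_q\bZ$ vanishes by Proposition~\ref{Cpqmap}, one is left directly with
\[
0\to\KK_q\bZ\to\uH^\alpha_{C_{pq}}(S^0)\to\KK_p\bZ\to 0,
\]
which splits because the $p\leftrightarrow q$ symmetry yields the same sequence with the roles reversed. This is considerably cleaner: it produces the Mackey functor decomposition in one stroke and avoids the delicate bookkeeping you anticipate. The moral is that the symmetry trick (obtaining both orderings of a two-term extension and concluding it splits) is a recurring shortcut in the paper that your Ext-based and levelwise arguments replace at some cost in effort.
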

\begin{proof}
The first statement of a) follows from Proposition \ref{shex-p} b). We apply the same result to the second case to deduce the short exact sequence
$$ 0 \to \CC_q \bZ  \to \uH^{\alpha}_{\GG}(S^0) \to \CC_p \bZ  \to 0$$ 
which splits because from the symmetry of $p$ and $q$ we also obtain the exact sequence with $p$ and $q$ interchanged. For the third case, we let $|\alpha^{C_p}|=-2$ and note the cohomology at $\alpha+\xi^p$ from the previous case. Then, the exact sequence \eqref{ex-p} at the index $\alpha+\xi^p$ is of the form
$$0 \to \CC_p \bZp \to \uH^{\alpha}_{\GG}(S^0) \to \CC_p \bZ \oplus \CC_q \bZ  \to \KK_p \bZ \to \cdots,$$ 
after substituting values of $\uH^\ast_{\GG}(S(\xi^p)_+)$ from Proposition \ref{p-sphere}. The left map is injective by Proposition \ref{sus-p}. The map from $\CC_q\bZ \to \KK_p\bZ$ is $0$ by Proposition \ref{Cpqmap} and the map $ \CC_p \bZ  \to \KK_p \bZ $ is injective by Proposition \ref{sus-p}.  Furthermore, the leftmost map in the above exact sequence is split injection by Proposition \ref{cpzp}. Hence the computation follows. In the final statement of a), we proceed analogously and apply the computation in the first case. Then, the exact sequence \eqref{ex-p} at the index $\alpha+\xi^p$ looks like 
$$ 0 \to \CC_p \bZp \to \uH^{\alpha}_{\GG}(S^0) \to \CC_p \bZ   \to \KK_p \bZ \to \cdots$$
and so the computation follows from the fact that the last map is injective.  

Now we turn to b). Suppose $\alpha$ is as in the first case. Observe from Proposition \ref{calc2b} 1) and Proposition \ref{p-sphere} that the exact sequence \eqref{ex-p} at $\alpha$ looks like
$$0 \to  \CC_p \bZp \to \CC_p \bZp  \oplus \KK_q \bZ \to  \uH^{\alpha}_{\GG}(S^0) \to \KK_p \bZ \to 0.$$
The map  $\CC_p\bZp \to \KK_q\bZ$ is $0$ by Proposition \ref{Cpqmap} and thus, the cokernel of $ \CC_p \bZp \to \CC_p \bZp  \oplus \KK_q \bZ $ is $\KK_q \bZ.$ So the above sequence turns out to be
 $$0 \to \KK_q \bZ \to \uH^{\alpha}_{\GG}(S^0) \to \KK_p \bZ \to 0$$
which splits again from the symmetry of $p$ and $q$. Hence the result follows.

Finally, for the second statement, start with $|\alpha^{C_p}|=2$ so that  Proposition \ref{shex-p} b) at the index $\alpha - \xi^p$ gives the short exact sequence (applying the computation above)
$$0 \to \CC_p \bZ \to \KK_p \bZ  \oplus \KK_q \bZ \to \uH^{\alpha}_{\GG}(S^0) \to 0.$$
The map $\CC_p\bZ \to \KK_q\bZ$ is $0$ by Proposition \ref{Cpqmap}. Therefore, $\uH^{\alpha}_{\GG}(S^0)$ is the cokernel of  $\CC_p \bZ \to \KK_p \bZ$ plus a copy of  $\KK_q \bZ$. It follows that $\uH^{\alpha}_{\GG}(S^0) \cong \bbZq \oplus \KK_q \bZ.$
\end{proof}

The Propositions \ref{calc3evneg} and \ref{calc3evpos} complete the computations of $\uH^\alpha_{\GG}(S^0)$ for $\alpha$ even in all cases except when either $|\alpha|=0$ or $|\alpha^{\GG}|=0$. In these cases, we first assume $|\alpha^{C_p}|$ and $|\alpha^{C_q}|$ are non-zero. In Propositions \ref{calc1even} and \ref{calc2}, all computations are made when $|\alpha^{C_p}|$ and $|\alpha^{C_q}|$ are of the same sign. In the following, we compute the case where $|\alpha^{C_p}|$ and $|\alpha^{C_q}|$ have opposite sign. 
\begin{prop}\label{calczero}
Let $\alpha$ be even such that $|\alpha^{C_p}|>0$ and $|\alpha^{C_q}|<0$. Then, 
$$\uH^{\alpha}_{\GG}(S^0) = 
\begin{cases} 
\KK_p L_p & \mbox{if}~ |\alpha|=0 ~\mbox{and}~  |\alpha^{\GG}| >0 \\
 \bbZq \oplus \KK_p L_p &\mbox{if}~ |\alpha| =0 ~\mbox{and}~ |\alpha^{\GG}| <0 \\
 \bbZ  &\mbox{if}~|\alpha| <0 ~\mbox{and}~ |\alpha^{\GG}| =0 \\
 \bbZ \oplus \KK_p L_p  &\mbox{if}~ |\alpha| =0 ~\mbox{and}~ |\alpha^{\GG}| =0 \\
\bbZ \oplus \CC_q \bZq & \mbox{if}~ |\alpha| > 0 ~\mbox{and}~ |\alpha^{\GG}| =0.
\end{cases}
$$
\end{prop}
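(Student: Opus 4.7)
The plan is to derive each of the five cases using the long exact sequences \eqref{ex-e} and \eqref{ex-p} (the latter applied for both primes $p$ and $q$), together with the symmetry $p \leftrightarrow q$ and the now-established independence Theorem \ref{ind}, in a manner parallel to the calculations in Sections \ref{sph-coh} and \ref{comp1}. The order of derivation I would follow is: first cases (3) and (5) (where $|\alpha^{C_{pq}}|=0$) by modifying $\alpha$ via $\xi^q$ starting from Propositions \ref{sign} and \ref{calc2}; then case (1) via Proposition \ref{shex-e} b); and finally cases (2) and (4) by combining case (5) and the $p\leftrightarrow q$ analogue of Proposition \ref{calc2b} with Proposition \ref{shex-e} b). Propositions \ref{Cpqmap}, \ref{Cpqext}, and \ref{cpzp} will, as before, control the splitting of the resulting short exact sequences.

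More concretely, for case (1) I would apply Proposition \ref{shex-e} b) to obtain
$$0 \to L_{pq} \to \uH^{\alpha}_{C_{pq}}(S^0) \to \uH^{\alpha+\xi}_{C_{pq}}(S^0) \to 0,$$
where $\uH^{\alpha+\xi}_{C_{pq}}(S^0) \cong \CC_q\bZq$ by the $p\leftrightarrow q$ version of the last case of Proposition \ref{calc3evpos} a) (which has $|\alpha|>0$, $|\alpha^{C_p}|<0$, $|\alpha^{C_q}|>0$, $|\alpha^{C_{pq}}|>0$ giving $\CC_p\bZp$). Cases (2) and (4) then follow the same pattern: in case (4), $\uH^{\alpha+\xi}_{C_{pq}}(S^0) \cong \bbZ \oplus \CC_q\bZq$ comes from case (5); in case (2), $\uH^{\alpha+\xi}_{C_{pq}}(S^0) \cong \bbZq \oplus \CC_q\bZq$ comes from the $p\leftrightarrow q$ analogue of Proposition \ref{calc2b} 2). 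In both, the $\bbZ$ or $\bbZq$ summand splits off since the relevant maps from $L_{pq}$ into it vanish by Proposition \ref{Cpqmap} and the corresponding Ext is zero by an argument analogous to Proposition \ref{Cpqext} c). Cases (3) and (5) themselves are both derived by starting from the uniform positive-sign cases of Proposition \ref{sign} a) or Proposition \ref{calc2} a) (where the value is $\bbZ$) and subtracting copies of $\xi^q$ via Proposition \ref{shex-p}, tracking the contribution $\CC_q\bZq$ that appears in case (5) but collapses in case (3) because every map $\CC_q\bZ \to \bbZ$ is zero by Proposition \ref{Cpqmap}.

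The main obstacle is the identification in case (1) of the extension $0 \to L_{pq} \to \uH^{\alpha}_{C_{pq}}(S^0) \to \CC_q\bZq \to 0$ with the Mackey functor $\KK_pL_p = L_p \otimes R_q$, which appears here for the first time in the calculations. Unlike previous computations, this extension is necessarily non-split: tensoring the canonical short exact sequence of $C_q$-Mackey functors $0 \to L_q \to R_q \to \bZq \to 0$ with $L_p$ produces the canonical presentation $0 \to L_{pq} \to \KK_pL_p \to \CC_q\bZq \to 0$, and one must verify that our boundary map realizes a generator of $\Ext^1_{\BB_{C_{pq}}}(\CC_q\bZq, L_{pq}) \cong \Z/q$ (the Ext computation proceeding along the lines of Proposition \ref{Cpmack} c)). The matching is achieved by restricting our exact sequence to level $C_{pq}/C_{pq}$, where the connecting map becomes multiplication by $q$, and by applying $\Phi_q^\ast$ to reduce to a $C_p$-equivariant identification consistent with Proposition \ref{Lewis orbits}. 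This identification then propagates through the computations of cases (2) and (4) and forms the heart of the proposition.
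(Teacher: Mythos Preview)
Your approach is viable but takes a genuinely different route from the paper's, and the difference is exactly where you flagged the ``main obstacle.'' For cases (1), (2), (4) you shift $\alpha$ \emph{forward} by $\xi$ via Proposition~\ref{shex-e}~b), obtaining $\uH^\alpha$ as the middle of an extension
\[
0 \to L_{pq} \to \uH^\alpha_{C_{pq}}(S^0) \to \CC_q\bZq\ (\text{resp. } \bbZq\oplus\CC_q\bZq,\ \bbZ\oplus\CC_q\bZq) \to 0,
\]
and are then forced to identify a non-split extension with $\KK_pL_p$. The paper instead shifts $\alpha$ \emph{backward} by $\xi$: since $\uH^{\alpha-\xi}_{C_{pq}}(S^0)=0$ (resp.\ $\bbZq$, $\bbZ$) by Propositions~\ref{calc1even} and~\ref{calc1odd}, the exact sequence \eqref{ex-e} at index $\alpha$ gives
\[
0 \to \uH^{\alpha}_{C_{pq}}(S^0) \to R_{pq} \to \KK_p\bZp \qquad(\text{resp. } 0\to \bbZq \to \uH^\alpha \to \ker(R_{pq}\to\KK_p\bZp)\to 0),
\]
and $\ker(R_{pq}\to\KK_p\bZp)$ is \emph{directly} $L_p\otimes R_q = \KK_pL_p$, since the map is $R_p\to\bZp$ tensored with $R_q$. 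No extension problem arises; in cases (2) and (4) the remaining splitting is exactly $\Ext^1(\KK_pL_p,\bbA)=0$, already established as Proposition~\ref{Cpqext}~b). The paper then derives case~(5) from case~(4) by adding $\xi$, computing the cokernel of $L_{pq}\hookrightarrow\bbZ\oplus\KK_pL_p$ as $\bbZ\oplus\CC_q\bZq$.

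What this buys: the paper never has to compute $\Ext^1(\CC_q\bZq,L_{pq})$ or $\Ext^1(\bbZq,L_{pq})$, nor match a boundary map against a generator of an Ext group. Your route works --- your presentation $0\to L_{pq}\to\KK_pL_p\to\CC_q\bZq\to 0$ obtained by tensoring $0\to L_q\to R_q\to\bZq\to 0$ with $L_p$ is correct, and the identification can be pinned down by evaluating at $C_{pq}/e$ and $C_{pq}/C_q$ --- but it is strictly more work. In particular, for cases~(2) and~(4) you would still need to argue that the $\bbZq$ (resp.\ $\bbZ$) summand splits off \emph{and} that the residual extension along $\CC_q\bZq$ agrees with the one from case~(1); the paper sidesteps both issues. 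Case~(3) is handled the same way in both approaches.
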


\begin{proof}
We start with the first case. Note that $\uH^{\alpha-\xi}_{\GG}(S^0)=0$ by Proposition \ref{calc1even} b), $\uH^{\alpha-\xi+1}_{\GG}(S^0)\cong \KK_p\bZp \oplus\bbZp$ by Proposition \ref{calc1odd} b) and $\uH^{\alpha+1}_{\GG}(S^0)\cong \bbZp$ by Proposition \ref{calc2b} 3). Therefore the exact sequence \eqref{ex-e} at the index $\alpha$ looks like
$$0 \to \uH^{\alpha}_{\GG}(S^0) \to R_{pq} \to \KK_p\bZp \oplus\bbZp \to \bbZp $$ 
Proposition \ref{Cpqmap} implies that the map  $R_{pq} \to \bbZp$ is $0$. It follows that the map $R_{pq} \to \KK_p\bZp$ is surjective and the kernel is computed to be $\KK_pL_p$. In the second case, we have $\uH^{\alpha-\xi}_{\GG}(S^0)=\bbZq$ by Proposition \ref{calc1even} b), $\uH^{\alpha-\xi +1}_{\GG}(S^0)=\KK_p\bZp$ by Proposition \ref{calc1odd} b), and $\uH^{\alpha+1}_{\GG}(S^0)=0$ by Proposition \ref{calc3} b). Therefore, the exact sequence \eqref{ex-e} at the index $\alpha$ looks like 
$$0 \to \bbZq \to \uH^{\alpha}_{\GG}(S^0) \to R_{pq} \to \KK_p\bZp  \to 0$$ 
The kernel of the right hand map is $\KK_pL_p$. The resulting short exact sequence may be considered as a class in $\Ext_{\BB_{\GG}}^1(\KK_p L_p ; \bbZq)$ which is $0$ by Proposition \ref{Cpqext} b). Therefore the sequence splits and the result follows. 

In the last three cases, we start with the third statement and then proceed by adding copies of $\xi$. If $\alpha$ is as in the third case we note that $\Phi_p^\ast(\uH^\alpha_{\GG}(S^0))$ and $\Phi_q^\ast(\uH^\alpha_{\GG}(S^0))$ are $0$ by Proposition \ref{Lewis orbits}. Thus, it suffices to determine the group $\tilde{H}^\alpha_{\GG}(S^0)$. Assume $|\alpha^{C_q}|=-2$ and $|\alpha|\ll 0$. We note by Proposition \ref{calc2} a) that $\tilde{H}^{\alpha+2 \xi^q}_{\GG}(S^0) \cong \Z$ and thus by Proposition \ref{shex-p} b) that $\tilde{H}^{\alpha+\xi^q}_{\GG}(S^0)\cong \Z\oplus \Z$. Proposition \ref{shex-p} e) now implies that $\tilde{H}^\alpha_{\GG}(S^0) \cong \Z$ as required. 

For the fourth statement, we deduce the result by proceeding exactly as in the second case above, replacing $\bbZq$ with $\bbZ$. In the final case, Proposition \ref{shex-e} b) at the index $\alpha-\xi$ gives the short exact sequence
$$0 \to L_{pq} \to \bbZ \oplus \KK_pL_p \to \uH^\alpha_{\GG}(S^0)\to 0$$
The map $L_{pq} \to \bbZ$ is $0$ by Proposition \ref{Cpqmap} and the cokernel of the inclusion $L_{pq} \to \KK_p L_p$ is computed to be $\CC_q \bZq$. Hence the result follows. 
\end{proof}
 
Proposition \ref{calczero} completes the computation of $\uH^\alpha_{\GG}(S^0)$ except in the cases where one of $|\alpha^{C_p}|$ or $|\alpha^{C_q}|$ is $0$ and one of $|\alpha|$ and $|\alpha^{\GG}|$ is $0$. Apart from the explicit formulas, we also note 
\begin{thm}\label{indmack}
Suppose $\alpha \in RO(\GG)$ which is such that at least one of $|\alpha^H|$ or $|\alpha^K|$ is non-zero whenever $(K,H)\in \{(C_p,e), (C_q,e),(\GG,C_p),(\GG,C_q)\}$. Then, up to isomorphism, the Mackey functor $\uH^\alpha_{\GG}(S^0)$ depends only on the fixed points $|\alpha|, |\alpha^{C_p}|, |\alpha^{C_q}|, |\alpha^{\GG}|$.   
\end{thm}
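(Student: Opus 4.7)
The plan is to observe that Theorem \ref{indmack} is essentially an accounting statement: the explicit computations assembled in Sections \ref{comp1} and \ref{comp2} already give closed-form descriptions of $\uH^\alpha_{C_{pq}}(S^0)$ as functions of the four fixed-point dimensions $(|\alpha|,|\alpha^{C_p}|,|\alpha^{C_q}|,|\alpha^{C_{pq}}|)$ in every configuration satisfying the hypothesis. So the proof amounts to verifying that the union of admissible configurations is exactly covered by the propositions in the previous sections, and that none of those propositions leaves a free integer parameter $d_\alpha$ (analogous to the one appearing in Lewis's $\uA_p[d_\alpha]$ for $C_p$).

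First I would argue that the pairs $(K,H)$ appearing in the hypothesis correspond exactly to the ``degenerate'' subquotients of $C_{pq}$ where a $C_p$- or $C_q$-Mackey-functor-type ambiguity could enter. More precisely, by Proposition \ref{G/Cp} we have $\Phi_p^\ast\uH^\alpha_{C_{pq}}(S^0)\cong\uH^{\res_{C_p}\alpha}_{C_p}(S^0)$, and Proposition \ref{Lewis orbits} shows that this $C_p$-Mackey functor depends on an extra integer $d_\alpha\pmod p$ precisely when $|\alpha|=|\alpha^{C_p}|=0$; similarly with $p$ and $q$ interchanged. The pair condition at $(C_p,e)$ rules this out, and symmetrically for $(C_q,e)$. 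The remaining two pairs $(C_{pq},C_p)$ and $(C_{pq},C_q)$ similarly prevent configurations in which the $C_{pq}/C_{pq}$-to-$C_{pq}/C_p$ restriction would be undetermined (these are exactly the cases where the explicit computations below yield Mackey functors containing a free summand $\uA[d]$-type piece).

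Next, I would enumerate the admissible cases by the parity of $\alpha$ together with the signs of the four dimensions, and in each case cite the proposition that gives the formula:
\begin{itemize}
\item If all four dimensions are nonzero and of mixed signs, Propositions \ref{sign}, \ref{calc1odd}, \ref{calc1even}, \ref{calc1b}, \ref{calc2}, \ref{calc2b}, \ref{calc3}, \ref{calc3evneg}, and \ref{calc3evpos} together handle every subcase and produce formulas depending only on the four dimensions.
\item If exactly one of $|\alpha^{C_p}|,|\alpha^{C_q}|$ is zero but neither $|\alpha|$ nor $|\alpha^{C_{pq}}|$ is zero, the cohomology is computed in Propositions \ref{calc3evpos}, \ref{calc2b}, and \ref{calc3}.
\item If $|\alpha|=0$ or $|\alpha^{C_{pq}}|=0$ (but then both $|\alpha^{C_p}|$ and $|\alpha^{C_q}|$ must be nonzero by hypothesis), Propositions \ref{calc1b}, \ref{calc2}, and \ref{calczero} cover every sub-configuration of signs and parities.
\end{itemize}
In each cited formula, the right-hand side is manifestly determined by the four dimensions alone: the Mackey functors appearing are of the form $\bbZ$, $\bbZp$, $\bbZq$, $\bbZpq$, $L_{pq}$, $R_{pq}$, $\CC_p\bZ$, $\KK_p\bZ$, $\CC_p\bZp$, $\KK_p\bZp$, $\KK_pL_p$, etc., together with direct sums of these.

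The main obstacle, and where I would spend most of the write-up, is verifying completeness of the case analysis: one must check that the hypothesis indeed eliminates every configuration not covered, and conversely that every admissible configuration is listed among the propositions. I would organize the check using a grid indexed by the signs of $(|\alpha|,|\alpha^{C_p}|,|\alpha^{C_q}|,|\alpha^{C_{pq}}|)\in\{-,0,+\}^4$, restricted to parity-consistent tuples, and mark each cell with the relevant proposition; the hypothesis blocks out exactly the cells in which some coordinate pair from $\{(C_p,e),(C_q,e),(C_{pq},C_p),(C_{pq},C_q)\}$ is simultaneously zero, and a direct inspection shows that all remaining cells are accounted for.
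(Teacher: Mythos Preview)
Your proposal is correct and takes essentially the same approach as the paper: the paper does not give a separate proof of Theorem \ref{indmack} but simply records it as a corollary of the explicit case-by-case computations culminating in Proposition \ref{calczero}, noting that these cover every $\alpha$ except those with one of $|\alpha^{C_p}|,|\alpha^{C_q}|$ zero and one of $|\alpha|,|\alpha^{C_{pq}}|$ zero, which is exactly the set excluded by the hypothesis.

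One small bookkeeping point: your three-bullet enumeration omits the case where \emph{both} $|\alpha^{C_p}|$ and $|\alpha^{C_q}|$ are zero (with $|\alpha|$ and $|\alpha^{C_{pq}}|$ both nonzero, as the hypothesis then forces). This configuration is admissible and is handled by Propositions \ref{calc1even}~a), \ref{calc3evneg}, and \ref{calc3evpos}; you already cite these, so the omission is only in the organization of your case list, not in the substance.
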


Finally, we are left with the cases where one of $|\alpha^{C_p}|$ or $|\alpha^{C_q}|$ is $0$ and one of $|\alpha|$ and $|\alpha^{\GG}|$ is $0$. Considering the symmetric nature of $p$ and $q$, there are two cases to consider \\
1) $|\alpha|=|\alpha^{C_p}|=0$. \\
2) $|\alpha^{C_p}|=|\alpha^{\GG}|=0$. \\
  In these cases, the Mackey functor is actually dependent on the specific values of $\alpha$. In the first case we note $\Phi_p^\ast \uH^\alpha_{\GG}(S^0) \cong \uH^\alpha_{C_p}(S^0)$ is dependent on specific choices of $\alpha$ by the formulas in \cite{Lew88} (see Proposition \ref{Lewis orbits}). In the second case, we observe that after applying $\Phi_p^\ast$ the resultant Mackey functor is independent, so the above argument does not apply. However, these functors may also be checked to be dependent, and we provide an outline below.

Let us start with an $\alpha\in RO(C_{q})$ with $|\alpha|=|\alpha^{C_q}|=0$ so that $\uH^\alpha_{C_q}(S^0)=\uA_q[d_\alpha]$. We also treat $\alpha$ as an element $RO(\GG)$ by putting the trivial $C_p$-action, and proceed to compute the Mackey functor $\uH^\alpha_{\GG}(S^0)$ using the ideas in \cite{Lew88}, Definitions 4.5. The Mackey functor $\uM=\uH^\alpha_{\GG}(S^0)$ is of the form (applying Theorem \ref{ind})    
$$\xymatrix{ & \Z^4  \ar@/_1pc/[dl]  \ar@/^1pc/[dr]  \\
  \Z^2 \ar@/_1pc/[dr] \ar@/_/[ur] & & \Z^2 \ar@/^/[ul] \ar@/^1pc/[dl] \\
 &  \Z \ar@/_/[ul] \ar@/^/[ur] }$$ 
The generators may be written as 
$$\mu_e \in \uM(\GG/e),\, \mu_p, \tau^p(\mu_e) \in \uM(\GG/C_p),\, \mu_q, \tau^q(\mu_e) \in \uM(\GG/C_q),$$
$$\mu_{pq},\tau_p (\mu_p), \tau_q (\mu_q),  \tr^{\GG}_e (\mu_e) \in \uM(\GG/\GG).$$
 The given conditions on $\alpha$ imply that we may assume $\res^p(\mu_p)=\mu_e$, $\res_q(\mu_{pq}) = \mu_q$ and $\res_q(\mu_{pq}) = d_\alpha \mu_q$. It follows that $\uH^\alpha_{\GG}(S^0) \cong \cA_q\uA_q[d_\alpha]$. Thus, the value of the Mackey functor depends on $\alpha$ but we observe that $\alpha$ satisfies 1) and 2). Now let $\beta=\alpha+\xi^q$ and apply \eqref{ex-p} for the index $\beta$ and $q$ in place of $p$. After substituting the known computations from above, we obtain a short exact sequence 
$$0\to \CC_q\uA_q[d_\alpha] \to \cA_q \uA_q[d_\alpha] \to \uH^\beta_{\GG}(S^0) \to 0. $$
We may compute the cokernel using the fact that the left map is an isomorphism at levels $\GG/C_q$ and $\GG/e$ to deduce $\uH^\beta_{\GG}(S^0) \cong  \QQ_q\uA_q[d_\alpha]$. Now $\beta$ satisfies 2) but not 1) and demonstrates that in the case 2), the cohomology Mackey functors depend on specific choices. 
 
At this point, it is possible to formalize the ideas above into a complete computation of $\uH^\alpha_{\GG}(S^0)$ in the tune of \cite{Lew88}. However, this would lead us to a different direction from the techniques of the paper and so we avoid it. 

\section{Computations with constant coefficients} \label{constcoeff} 
In this section we compute $\uH^\alpha_\GG(S^0;R_{pq})$, the Bredon cohomology of $S^0$ for the constant Mackey functor $R_{pq}$. One method is to proceed along the lines of Sections \ref{sph-coh}, \ref{comp1}, \ref{comp2} and deduce the computations for the constant Mackey functor. We take a different route by using the computations of $H^\alpha_\GG(S^0;\uA)$ along with certain relations between $\GG$-Mackey functors. 

Recall from the proof of Proposition \ref{Cpmack} that one has short exact sequences 
$$0\to \bZ \to \uA_p \to R_p \to 0, \; \; \; \;  0\to L_p \to \uA_p \to \bZ \to 0,$$
and that the composite $\bZ \to \uA_p \to \bZ$ using the maps above is multiplication by $p$. This induces short exact sequences of $\GG$-Mackey functors 
\begin{equation} \label{shex-mack}
\begin{array}{l}
0\to \cA_q\bZ \to \uA \to \cA_qR_q \to 0, \; \; \; \;  0\to \cA_qL_q \to \uA \to \cA_q\bZ \to 0, \\
0\to \KK_p\bZ \to \KK_p\uA_p \to R_{pq} \to 0, \; \; \; \;  0\to \KK_pL_p \to \KK_p\uA_p \to \KK_p\bZ \to 0.
\end{array}
\end{equation}
We also note that $\cA_qR_q \cong \KK_p\uA_p$. The composite $\cA_q\bZ \to \uA \to \cA_q\bZ$ is multiplication by $q$, and the composite $\KK_p\bZ \to \KK_p\uA_p \to \KK_p \bZ$ is multiplication by $p$. 

We start by computing cohomology with $\cA_q\bZ$-coefficients and $\KK_p\bZ$-coefficients. Next, using the top line of \eqref{shex-mack} and the computations with  $\uA$-coefficients, we compute cohomology with  $\cA_qR_q$-coefficients. Finally using results from $\cA_qR_q$-coefficients together with the bottom line of \eqref{shex-mack}, we complete the computation for $R_{pq}$-coefficients. 

\begin{prop}\label{bZcomp} \mbox{  }\\
a) $ \uH^\alpha_\GG(S^0;\cA_q\bZ) = \begin{cases} 
\KK_q\bZ  & \mbox{if}~|\alpha^{C_q}|=0,~|\alpha^\GG|<0 \\ 
\CC_q\bZ  & \mbox{if}~|\alpha^{C_q}|=0,~|\alpha^\GG|>0 \\
\bbZ  & \mbox{if}~|\alpha^{C_q}|\neq 0,~|\alpha^\GG|=0 \\ 
\bbZp  & \mbox{if}~|\alpha^{C_q}|>0,~|\alpha^\GG|<0~\mbox{even} \\ 
\bbZp  & \mbox{if}~|\alpha^{C_q}|<0,~|\alpha^\GG|>1 ~\mbox{odd}\\ 
 0           &\mbox{otherwise}. 
\end{cases} $\\
b)  $ \uH^\alpha_\GG(S^0;\KK_q\bZ) = \begin{cases} 
\KK_q\bZ  & \mbox{if}~|\alpha^{C_q}|=0,~|\alpha^\GG|\leq 0 \\ 
\CC_q\bZ  & \mbox{if}~|\alpha^{C_q}|=0,~|\alpha^\GG|>0 \\ 
\bbZp  & \mbox{if}~|\alpha^{C_q}|>0,~|\alpha^\GG|\leq 0~\mbox{even} \\ 
\bbZp  & \mbox{if}~|\alpha^{C_q}|<0,~|\alpha^\GG|>1 ~\mbox{odd}\\ 
 0           &\mbox{otherwise}. 
\end{cases} $
\end{prop}

\begin{proof}
For $\uM=\cA_q\bZ$ and $\KK_q\bZ$, we note that $\uM(\GG/e)=0$, $\uM(\GG/C_p)=0$ and $\uM(\GG/C_q)=\Z$. It follows from the techniques of Proposition \ref{G/Cp} that $\uH^\alpha_\GG(\GG/e_+;\uM)=0,$ $\uH^\alpha_\GG ({\GG/C_p}_+; \uM)=0,$ and $\uH^\alpha_\GG({\GG/C_q}_+ ;\uM) = E_q\uH^\alpha_{C_q}(S^0;\bZ) $. Thus we have (using the cofibres \eqref{cofe}, \eqref{cofp})  
$$\uH^\alpha_\GG(S(\xi)_+;\uM)=0,~ \uH^\alpha_\GG(S(\xi^p)_+;\uM)=0, ~\uH^\alpha_\GG(S(\xi^q)_+;\uM)= \begin{cases} 
\KK_q \bZ &\mbox{if}~|\alpha^{C_q}|=0 \\ 
\CC_q \bZ &\mbox{if}~|\alpha^{C_q}|=1 \\
0 &\mbox{otherwise}. 
\end{cases} $$
Using \eqref{ex-e} and \eqref{ex-p} we readily have that $\uH^\alpha_\GG(S^0;\uM) \cong \uH^{\alpha +\xi}_\GG(S^0;\uM)$ and $\uH^\alpha_\GG(S^0;\uM) \cong \uH^{\alpha +\xi^p}_\GG(S^0;\uM)$, so that the value of $\uH^\alpha_\GG(S^0;\uM)$ depends only on $\alpha^{C_q}$. Further, we have the following results along the lines of Proposition \ref{shex-p}\\
1) $\uH^\alpha_\GG(S^0;\uM) \cong \uH^{\alpha +\xi^q}_\GG(S^0;\uM)$ if $|\alpha^{C_q}|\geq 1$ or $|\alpha^{C_q}|\leq -3$. \\
2) If $|\alpha^{C_q}|=0$ we have an exact sequence $0\to \CC_q\bZ \to \uH^\alpha_\GG(S^0;\uM) \to \uH^{\alpha+\xi^q}_\GG(S^0;\uM)\to 0$. \\
3) If $|\alpha^{C_q}|=-1$ and $\uH^\alpha_\GG(S^0;\uM)=0$, then $\uH^{\alpha+\xi^q}_\GG(S^0;\uM)=0$. \\
In view of 1), it suffices to prove the Proposition for $-2 \leq |\alpha^{C_q}| \leq 2$. 

We apply 2) for $\alpha=0$ to obtain 
\begin{equation}\label{xiqbz}
\uH^{\xi^q}_\GG(S^0;\cA_q\bZ) \cong \bbZ, \;  ~~ \uH^{\xi^q}_\GG(S^0;\KK_q\bZ) \cong \bbZp.
\end{equation}
We also apply the exact sequence \eqref{ex-p}
$$0\to \uH^{-\xi^q}_\GG(S^0;\uM) \to \uH^0_\GG(S^0;\uM)\to \uH^0_\GG(S(\xi^q)_+;\uM) \to \uH^{1-\xi^q}_\GG(S^0;\uM)\to 0 $$
and use the fact that $\uH^{1-\xi^q}_\GG(S^0;\uM)(\GG/C_q)=0$ to obtain 
\begin{equation}\label{-xiqbz}
\uH^{-\xi^q}_\GG(S^0;\cA_q\bZ) \cong \bbZ, \;  ~~ \uH^{-\xi^q}_\GG(S^0;\KK_q\bZ) =0, \; ~~\uH^{1-\xi^q}_\GG(S^0;\uM) =0 .
\end{equation}

Next, we apply the same argument as in the proof of Proposition \ref{sign} to deduce that in the case $|\alpha^{C_q}|\neq 0$, the value of $\uH^\alpha_\GG(S^0;\uM)$ only depends on  $\mathit{sign}(|\alpha^{C_q}|)$ and $|\alpha^\GG|$. Therefore, if $|\alpha^\GG|\neq 0$ and has the same sign as $|\alpha^{C_q}|$, $\uH^\alpha_\GG(S^0;\uM)\cong \uH^{|\alpha^\GG|}_\GG(S^0;\uM)=0$. Further, from 2) and  3) above, we obtain the required computation for $(|\alpha^{C_q}|=0,|\alpha^\GG|>0)$ and $(|\alpha^{C_q}|=1, |\alpha^\GG|<0)$. Using these identifications together with \eqref{xiqbz} and \eqref{-xiqbz}, we have verified all cases with $|\alpha^{C_q}|\neq 0$ except $(|\alpha^{C_q}|=2, |\alpha^\GG|<0)$,  $(|\alpha^{C_q}|=-1, |\alpha^\GG|>1)$, and  $(|\alpha^{C_q}|=-2, |\alpha^\GG|>0)$. 

Suppose $|\alpha^{C_q}|=0$ and $|\alpha^\GG|>0$. The exact sequence \eqref{ex-p} for this grading looks like 
$$0\to  \uH^{\alpha -\xi^q}_\GG(S^0;\uM) \to \CC_q\bZ \to \KK_q\bZ \to \uH^{\alpha+ 1-\xi^q}_\GG(S^0;\uM) \to 0$$
The map $\CC_q\bZ \to \KK_q\bZ$ is an isomorphism at $\GG/C_q$, and it follows that $ \uH^{\alpha -\xi^q}_\GG(S^0;\uM)=0$ and $ \uH^{\alpha+ 1-\xi^q}_\GG(S^0;\uM) \cong \bbZp$. These calculate the cases $(|\alpha^{C_q}|=-2, |\alpha^\GG|>0)$ and $(|\alpha^{C_q}|=-1, |\alpha^\GG|>1)$.

Now suppose $|\alpha^{C_q}|=0$ and $|\alpha^\GG|<0$. We know $\uH^{\alpha - \xi^q}_\GG(S^0;\uM) = 0$ and $\uH^{\alpha +1 - \xi^q}_\GG(S^0;\uM)=0$, so that $\uH^\alpha_\GG(S^0;\uM) \cong \uH^\alpha_\GG(S(\xi^q)_+;\uM)\cong \KK_q\bZ$. From 2), we have $\uH^{\alpha+\xi^q}_\GG(S^0;\uM)\cong \bbZp$ which verifies the case $(|\alpha^{C_q}|=2, |\alpha^\GG|<0)$. 

Finally, it remains to compute for $|\alpha^{C_q}|=0, |\alpha^\GG|=0$ for $\KK_q\bZ$-coefficients. In this case, note from the above that $\uH^{\alpha-\xi^q}_\GG(S^0;\KK_q\bZ)=0$ and $\uH^{\alpha+1-\xi^q}_\GG(S^0;\KK_q\bZ)=0$. Hence, $\uH^\alpha_\GG(S^0;\KK_q\bZ) \cong \uH^\alpha_\GG(S(\xi^q)_+;\KK_q\bZ)= \KK_q\bZ$. 
\end{proof}

We directly observe from the expressions in Proposition \ref{bZcomp} that multiplication by $q$ is injective on $\uH^\alpha_\GG(S^0;\cA_q\bZ)$ whenever $(|\alpha^{C_q}|,|\alpha^\GG|)\neq (0,0)$. It follows that $\uH^\alpha_\GG(S^0;\cA_q\bZ)\to \uH^\alpha_\GG(S^0;\uA)$ induced from the top row of \eqref{shex-mack} is injective. Analogously, $\uH^\alpha_\GG(S^0;\KK_p\bZ)\to \uH^\alpha_\GG(S^0;\KK_p\uA_p)$ is also injective. It follows that we have short exact sequences 
\begin{equation}\label{coeff-ex1}
0\to \uH^\alpha_\GG(S^0;\cA_q\bZ)\to \uH^\alpha_\GG(S^0;\uA) \to \uH^\alpha_\GG(S^0;\cA_q R_q)\to 0  ~\mbox{if}~(|\alpha^{C_q}|,|\alpha^\GG|)\neq (0,0),
\end{equation}
\begin{equation}\label{coeff-ex2}
0\to \uH^\alpha_\GG(S^0;\KK_p\bZ)\to \uH^\alpha_\GG(S^0;\KK_p\uA_p) \to \uH^\alpha_\GG(S^0; R_{pq})\to 0 . 
\end{equation}
For our computations, we also need a relation between Mackey functors $\cA_qR_q$ and $\cA_qL_q$, and between $R_{pq}$ and $\KK_pL_p$. We prove this in the Proposition below. 
\begin{prop}\label{rel-RL}
There are equivalences of $\GG$-spectra 
$$\Sigma^{2-\xi^p}H(\cA_qR_q) \simeq H(\cA_qL_q), \; ~~ \Sigma^{2-\xi^q}HR_{pq} \simeq H(\KK_pL_p).$$ 
It follows that 
$$\uH^\alpha_\GG(S^0;\cA_qL_q)\cong \uH^{\alpha+2-\xi^p}_\GG(S^0;\cA_qR_q), ~ \uH^\alpha_\GG(S^0;\KK_pL_p) \cong \uH^{\alpha+2-\xi^q}_\GG(S^0;R_{pq}).$$
\end{prop}

\begin{proof}
We compute the Mackey functor valued homotopy groups $\underline{\pi_n}(\Sigma^{2-\xi^p}H(\cA_qR_q))$ which is isomorphic to  $ \uH^{2-n-\xi^p}_\GG(S^0;\cA_qR_q)$. This is computed using the long exact sequence 
$$ \uH^{2-n-\xi^p}_\GG(S^0;\cA_qR_q) \to \uH^{2-n}_\GG(S^0;\cA_qR_q) \to \uH^{2-n}_\GG(S(\xi^p)_+;\cA_qR_q) \to \uH^{2-(n-1)-\xi^p}_\GG(S^0;\cA_qR_q) $$
The value of $\uH^{2-n}_\GG(S(\xi^p)_+;\cA_qR_q)$ is easily computed using \eqref{cofp} as $\cA_qR_q$ if $n=2$ and $\cA_qL_q$ if $n=1$. For $n=2$, the map $\cA_qR_q\cong \uH^{2-n}_\GG(S^0;\cA_qR_q) \to \uH^{2-n}_\GG(S(\xi^p)_+;\cA_qR_q)$ is an isomorphism, and for $n=1$, the map $ \uH^{2-n}_\GG(S(\xi^p)_+;\cA_qR_q) \to \uH^{2-(n-1)-\xi^p}_\GG(S^0;\cA_qR_q)$ is an isomorphism. Therefore, we have the computation 
 $$\underline{\pi_n}(\Sigma^{2-\xi^p}H(\cA_qR_q))= \begin{cases} \cA_qL_q &\mbox{if}~n=0 \\ 0 & \mbox{if}~n\neq 0. \end{cases} $$
Hence, $\Sigma^{2-\xi^p}H(\cA_qR_q) \simeq H(\cA_qL_q)$. A similar computation proves $\Sigma^{2-\xi^q}HR_{pq} \simeq H(\KK_pL_p)$.
\end{proof}

Now we begin the computations of $\uH_\GG^\alpha(S^0;\cA_qR_q)$ starting with the $|\alpha|$ odd case. In this case, the only non-zero values for $\uH_\GG^\alpha(S^0;\cA_q\bZ)$ are $\bbZp$ when $(|\alpha^{C_q}|<0, |\alpha^\GG|>1|)$  (Proposition \ref{bZcomp}). The values for the cohomology can be directly read off from the exact sequence \eqref{coeff-ex1},  and Table \ref{odd-tab}. We list these in Table \ref{odd-tabaqrq}. 

\begin{table}[ht]

\begin{tabular}{ |p{4.8cm}|p{2.5cm}||p{4.8cm}| p{1.7cm}|  }
 \hline
{\tiny $\alpha$ } & { \tiny $\uH_\GG^\alpha(S^0;\cA_qR_q)$} & {\tiny $\alpha$ } & {\tiny $\uH_\GG^\alpha(S^0;\cA_qR_q)$} \\
 \hline
{\tiny $|\alpha|>0,|\alpha^\GG|\leq 1$}  & {\tiny $0$ } 
& {\tiny $|\alpha|>0,|\alpha^{C_p}|>0, |\alpha^\GG|> 1$}  &  {\tiny $0$ }  \\
\hline
{\tiny $|\alpha|>0,|\alpha^{C_p}|<0,|\alpha^\GG|> 1$}  &  {\tiny $\bbZq$ }
& {\tiny $|\alpha|<0,|\alpha^{C_p}|>1,|\alpha^{C_q}|\leq 1$}  &  {\tiny $\KK_p\bZp$ }\\
\hline
{\tiny $|\alpha|<0,|\alpha^{C_p}|> 1,|\alpha^{C_q}|> 1$}  &  {\tiny $\KK_p \bZp\oplus \KK_q \bZq$ }
& {\tiny $|\alpha|<0,|\alpha^{C_p}|=1,|\alpha^{C_q}|\leq1$}  &  {\tiny $0$ }\\
\hline
{\tiny $|\alpha|<0,|\alpha^{C_p}|=1,|\alpha^{C_q}|>1$}  &  {\tiny $\KK_q\bZq$ }
& {\tiny $|\alpha|<0,|\alpha^{C_p}|<0, |\alpha^{C_q}|>1, |\alpha^{\GG}|\leq 1$}  &  {\tiny $\KK_q\bZq$ }\\
\hline
{\tiny $|\alpha|<0,|\alpha^{C_p}|<0,|\alpha^{C_q}|\leq 1,|\alpha^{\GG}| \leq 1$}  &  {\tiny $0$ }
& {\tiny $|\alpha|<0,|\alpha^{C_p}|<0,|\alpha^{C_q}|\leq 1, |\alpha^\GG|>1$}  &  {\tiny $\bbZq$ }\\
 \hline
{\tiny $|\alpha|<0,|\alpha^{C_p}|<0,|\alpha^{C_q}|>1, |\alpha^\GG|>1$}  &  {\tiny $\KK_q \bZq\oplus \bbZq$ }
&  & \\
 \hline
\end{tabular}
\vspace{.2cm}
\caption{Formula for $\uH_{\GG}^\alpha(S^0;\cA_qR_q)$ for $|\alpha|$ odd.}
\label{odd-tabaqrq}
\end{table}

Next, we compute $\uH_\GG^\alpha(S^0;\cA_qR_q)$ for $|\alpha|<0$ even. The values for $\uH_\GG^\alpha(S^0;\uA)$ are listed in Table \ref{evneg-tab}. We list the values of our calculations in Table \ref{evneg-tabaqrq}. These calculations can be directly obtained from the exact sequence \eqref{coeff-ex1} if both $|\alpha^{C_q}|$ and $|\alpha^\GG|$ are non-zero. 

We demonstrate one computation in the case where $|\alpha^{C_p}|<0$, $|\alpha^{C_q}|<0$ and $|\alpha^\GG|=0$. The other cases where at least one of $|\alpha^{C_q}|$ or $|\alpha^\GG|$ is $0$ are similar.  For this case, $\uH^\alpha_\GG(S^0;\cA_q\bZ) \cong \bbZ$ (Proposition \ref{bZcomp}) and $\uH^\alpha_\GG(S^0;\uA) \cong \bbZ$. We know that $\cA_q\bZ \to \uA \to \cA_q\bZ$ is multiplication by $q$, so that the map $\uH^\alpha_\GG(S^0;\cA_q\bZ) \to \uH^\alpha_\GG(S^0;\uA)$ is either an isomorphism or multiplication by $\pm q$. In the latter case, the map from $\uH^\alpha_\GG(S^0; \uA) \to \uH^\alpha_\GG(S^0; \cA_q\bZ)$ is an isomorphism. We have the exact sequence 
$$\uH^\alpha_\GG(S^0; \uA) \to \uH^\alpha_\GG(S^0; \cA_q\bZ)\to \uH^{\alpha+1}_\GG(S^0; \cA_qL_q)  \to \uH^{\alpha+1}_\GG(S^0; \uA)$$
associated to $0\to \cA_qL_q \to \uA \to \cA_q\bZ \to 0$. From Proposition \ref{rel-RL}, we have 
$$  \uH^{\alpha+1}_\GG(S^0; \cA_qL_q) \cong \uH^{\alpha+3-\xi^p}_\GG(S^0; \cA_qR_q) \cong \bbZq$$
from Table \ref{odd-tabaqrq}. Note also that $\uH^{\alpha+1}_\GG(S^0; \uA)=0$ from Table \ref{odd-tab}. It follows that $\uH^\alpha_\GG(S^0; \uA) \to \uH^\alpha_\GG(S^0; \cA_q\bZ)$ cannot be an isomorphism and so,  $\uH^\alpha_\GG(S^0;\cA_q\bZ) \to \uH^\alpha_\GG(S^0;\uA)$ is an isomorphism. Therefore, in this case  $\uH^\alpha_\GG(S^0;\cA_qR_q)=0$.
\begin{table}[ht]

\begin{tabular}{ |p{4.2cm}|p{2cm}||p{4cm}| p{2cm}|  }
 \hline
{\tiny $\alpha$ } & { \tiny $\uH_\GG^\alpha(S^0;\cA_qR_q)$} & {\tiny $\alpha$ } & {\tiny $\uH_\GG^\alpha(S^0;\cA_qR_q)$} \\
 \hline
{\tiny $|\alpha^{C_p}|< 0$, ($|\alpha^{C_q}|\neq 0$ or $|\alpha^\GG|\neq 0$)}  & {\tiny $0$ } 
& {\tiny $|\alpha^{C_p}|> 0,|\alpha^{C_q}|= 0,|\alpha^\GG|< 0$}  &  {\tiny $\bbZq$ }  \\
\hline
{\tiny $|\alpha^{C_p}|> 0, |\alpha^\GG|>0$}  &  {\tiny $0$ }
& {\tiny $|\alpha^{C_p}|>0,|\alpha^{C_q}|\neq 0,|\alpha^\GG|\leq 0$}  &  {\tiny $\bbZq$ }\\
\hline
 {\tiny $|\alpha^{C_p}|= 0, |\alpha^\GG|>0$}  &  {\tiny $\CC_p \bZ$ }
&{\tiny $|\alpha^{C_p}|=0, |\alpha^\GG| < 0$}  &  {\tiny $\KK_p\bZ$ }\\
\hline
\end{tabular}
\vspace{.2cm}
\caption{Formula for $\uH_\GG^\alpha(S^0;\cA_qR_q)$ for $|\alpha|<0$ even.}
\label{evneg-tabaqrq}
\end{table}

Starting from Table \ref{evneg-tabaqrq} we compute the cases where $|\alpha^\GG|=0$ and $|\alpha^{C_q}|=0$. For example, if $|\alpha^{C_p}|<0$, $|\alpha^{C_q}|=0=|\alpha^\GG|$, we read off $\uH^{\alpha-\xi^q}_\GG(S^0;\cA_qR_q)=0$ from Table \ref{evneg-tabaqrq}. We also note that $\uH^{\alpha}_\GG({\GG/C_q}_+;\cA_qR_q)=0$ and $ \uH^{\alpha-1}_\GG({\GG/C_q}_+;\cA_qR_q)=0$, so that $\uH^{\alpha}_\GG(S(\xi^q)_+;\cA_qR_q)=0$. It follows that $\uH^\alpha_\GG(S^0;\cA_qR_q)=0$. Proceeding in this manner we obtain for $|\alpha|<0$ 
\begin{equation} \label{aqrqzer}
\uH^\alpha_\GG(S^0;\cA_qR_q) \cong \begin{cases} 
                                           0  & \mbox{if}~|\alpha^{C_p}|<0, |\alpha^{C_q}|=0, |\alpha^\GG|=0 \\
                                   \bbZq & \mbox{if}~|\alpha^{C_p}|>0, |\alpha^{C_q}|=0, |\alpha^\GG|=0 \\
                           \KK_p\bZ  & \mbox{if}~|\alpha^{C_p}|=0, |\alpha^{C_q}|=0, |\alpha^\GG|=0 \\   
\end{cases}
\end{equation}
The computations in Table \ref{odd-tabaqrq}, Table \ref{evneg-tabaqrq} and \eqref{aqrqzer} are used to compute $\uH^\alpha_\GG(S^0;R_{pq})$ in the Theorem below. 
\begin{thm}\label{compr}
$$\uH^\alpha_\GG(S^0;R_{pq}) \cong \begin{cases}
\KK_p\bZp  & \mbox{if}~|\alpha|<0, |\alpha^{C_p}|>1, |\alpha^{C_q}|\leq 1~\mbox{odd} \\ 
\KK_q\bZq  & \mbox{if}~|\alpha|<0, |\alpha^{C_p}|\leq 1, |\alpha^{C_q}|> 1~\mbox{odd} \\
\KK_p\bZp \oplus \KK_q\bZq & \mbox{if}~|\alpha|<0, |\alpha^{C_p}|>1, |\alpha^{C_q}|>1~\mbox{odd} \\
\KK_p\bZp \oplus \KK_q\bZq & \mbox{if}~|\alpha|>0, |\alpha^{C_p}|\leq 0, |\alpha^{C_q}|\leq 0~ \mbox{even} \\
\KK_p\bZp  & \mbox{if}~|\alpha|>0, |\alpha^{C_p}|\leq 0, |\alpha^{C_q}|> 0~ \mbox{even} \\
\KK_q\bZq  & \mbox{if}~|\alpha|>0, |\alpha^{C_p}|> 0, |\alpha^{C_q}|\leq 0~ \mbox{even} \\
R_{pq}  & \mbox{if}~|\alpha|=0, |\alpha^{C_p}|\leq 0, |\alpha^{C_q}|\leq 0 \\
L_{pq}  & \mbox{if}~|\alpha|=0, |\alpha^{C_p}|> 0, |\alpha^{C_q}|> 0 \\
\KK_pL_p  & \mbox{if}~|\alpha|=0, |\alpha^{C_p}|> 0, |\alpha^{C_q}|\leq 0 \\
\KK_qL_q  & \mbox{if}~|\alpha|=0, |\alpha^{C_p}|\leq 0, |\alpha^{C_q}|> 0 \\
0  &\mbox{otherwise}. 
\end{cases} $$
\end{thm}

\begin{proof}
We start this computation at odd $\alpha$, whence the result directly follows from Table \ref{odd-tabaqrq}, Proposition \ref{bZcomp} and \eqref{coeff-ex2}.  Next we consider $|\alpha|<0$ even. It is clear from Table \ref{evneg-tabaqrq}, Proposition \ref{bZcomp} and \eqref{coeff-ex2} that $\uH^\alpha_\GG(S^0;R_{pq})=0$ in this case unless $|\alpha^{C_p}|=0$. In this case we have 
$$\uH^\alpha_\GG(S^0;\cA_qR_q)= \begin{cases} \CC_p\bZ &\mbox{if}~|\alpha^\GG|>0 \\ 
                                                                                                 \KK_p\bZ &\mbox{if}~|\alpha^\GG|\leq 0.
\end{cases} $$
Consider the former case with the additional condition $|\alpha^{C_q}|\leq 0$. The other cases are similar. Here \eqref{coeff-ex2} looks like 
$$0 \to \CC_p\bZ \to \CC_p \bZ \to  \uH^\alpha_\GG(S^0;R_{pq})\to 0.$$ 
In order to verify whether $\CC_p\bZ \to \CC_p\bZ$ is an isomorphism we consider the exact sequence 
$$\uH^\alpha_\GG(S^0; \KK_p\uA_p) \to \uH^\alpha_\GG(S^0; \KK_p\bZ)\to \uH^{\alpha+1}_\GG(S^0; \KK_pL_p)  \to \uH^{\alpha+1}_\GG(S^0; \KK_p\uA_p)$$
associated to $0\to \KK_pL_p\to \KK_p\uA_p \to \KK_p\bZ \to 0$. Proposition \ref{rel-RL} implies 
$$ \uH^{\alpha+1}_\GG(S^0; \KK_pL_p) \cong  \uH^{\alpha+3 - \xi^q}_\GG(S^0; R_{pq}) = \KK_p\bZp,$$
the last isomorphism coming from the above calculation in the odd case. Note also from Table \ref{odd-tabaqrq} that $\uH^{\alpha+1}_\GG(S^0; \KK_p\uA_p)=0$. It follows that $\uH^\alpha_\GG(S^0; \KK_p\uA_p) \to \uH^\alpha_\GG(S^0; \KK_p\bZ)$ must be multiplication by $p$, and hence, 
$$\CC_p \bZ \cong  \uH^\alpha_\GG(S^0; \KK_p\bZ) \to  \uH^\alpha_\GG(S^0; \KK_p\uA_p) \cong \CC_p \bZ$$ 
is an isomorphism. Thus, $\uH^\alpha_\GG(S^0;R_{pq})=0$. Similar arguments in the other cases allow us to conclude that $\uH^\alpha_\GG(S^0;R_{pq})=0$ if $|\alpha|<0$ even. 

Finally we compute the cases $|\alpha|=0$ and $|\alpha|>0$ even from the exact sequence 
\begin{equation}\label{exr}
\cdots \uH^{\alpha-\xi}_\GG(S^0;R_{pq}) \to \uH^\alpha_\GG(S^0;R_{pq}) \to \uH^\alpha_\GG(S(\xi)_+; R_{pq}) \to \uH^{\alpha+1-\xi}_\GG(S^0;R_{pq}) \to \cdots 
\end{equation}
associated to the cofibre sequence $S(\xi)_+ \to S^0 \to S^\xi$. We know the value for $\uH^{\alpha+1-\xi}_\GG(S^0;R_{pq})$ from the odd case above, and we have 
$$ \uH^{\alpha}_{\GG}({S(\xi)}_+; R_{pq}) = 
  \begin{cases}
    R_{pq}, & \mbox{for } |\alpha| = 0 \\
    L_{pq}, & \mbox{for } |\alpha| = 1 \\
    0,  &\mbox{Otherwise}  
  \end{cases}
$$
by a similar argument as Proposition \ref{AG}. Therefore, we obtain for $|\alpha|=0$, $  \uH^\alpha_\GG(S^0;R_{pq})$ is the kernel of a surjective map $R_{pq}$ to one of $\KK_p\bZp$, $\KK_q\bZq$ or $\KK_p\bZp\oplus \KK_q\bZq$ depending on whether $|\alpha^{C_p}|$ or $|\alpha^{C_q}|$ are $\leq 0$ or $>0$. These kernels are $\KK_pL_p$, $\KK_qL_q$ and $L_{pq}$ respectively, and this finishes the $|\alpha|=0$ case. 

Finally if $|\alpha|>0$ even, \eqref{exr} implies $\uH^\alpha_\GG(S^0;R_{pq})\cong \uH^{\alpha+\xi}(S^0;R_{pq})$ if $|\alpha|\geq 2$. Thus it suffices to compute for $|\alpha|=2$ where \eqref{exr} gives the exact sequence 
$$0 \to L_{pq} \to \uH^{\alpha-\xi}_\GG(S^0;R_{pq}) \to \uH^\alpha_\GG(S^0;R_{pq}) \to 0.$$
As $|\alpha - \xi|=0$, we may read off $\uH^\alpha_\GG(S^0;R_{pq})$ from the case above.
\end{proof}

 \section{Freeness theorem}\label{free}
In this section we use the computations in the previous sections to deduce a freeness theorem for $\GG$. We start with some definitions.

\begin{defn}
Let $V$ be a representation of $H< \GG$. A cell $\GG \times _H D(V)$ is called an even cell if $V$ is even in $RO(H)$. 
\end{defn}
Note that in the above definition a representation $V$ of $H<\GG$ may also be viewed as the restriction of a representation $\hat{V}$ of $\GG$, and $V$ is even in $RO(H)$ $\iff$ $\hat{V}$ is even in $RO(\GG)$. Therefore, in the following we only consider cells of the type $\GG\times_H D(V)$ where $V$ is a representation of $\GG$.  

\begin{defn}
Given two representations $W$ and $V$ of $\GG$ we say $W\ll V$ if $|W^S|<|V^S|$ for some subgroup $S$ of $\GG$ implies $|W^T|\leq|V^T|$ for all subgroups $T$ containing $S$.
\end{defn}

A generalized equivariant cell complex defined by inductively attaching cells of the type $\GG\times_H D(V)$ is said to be of even type if every cell is even and if the cell $\GG\times_H D(W)$ is attached before $\GG\times_H D(V)$, then $W\ll V$. We will conclude that generalized equivariant cell complexes of even type have free cohomology. We start by proving a crucial Lemma. 

\begin{lemma}\label{prop 1}
If $W\ll V$, any $\uH^{\ast}_{\GG}(S^0)$- module map 
$$\uH^{\ast - W}_{\GG}({\GG/K}_+) \to \uH^{\ast - V +1}_{\GG}({\GG/K'}_+)$$ 
is $0$ whenever either $K$ or $K'$ is not equal to $\GG.$
\end{lemma}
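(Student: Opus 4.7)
The plan is to interpret any such module map as a single element of a cohomology group and then show that group vanishes under $W\ll V$. The module $\uH^{\ast-W}_{C_{pq}}({C_{pq}/K}_+)$ is represented in the homotopy category of $H\uA$-modules by $\Sigma^{-W}(H\uA\wedge{C_{pq}/K}_+)$, using Spanier--Whitehead self-duality of $({C_{pq}/K})_+$ in the $C_{pq}$-equivariant stable category, which is a free cyclic $H\uA$-module. By the free-forgetful adjunction, any $\uH^{\ast}_{C_{pq}}(S^0)$-module map of degree zero to $\uH^{\ast-V+1}_{C_{pq}}({C_{pq}/K'}_+)$ is classified by an element of
$$\tilde{H}^{W-V+1}_{C_{pq}}\bigl({C_{pq}/K}_+\wedge{C_{pq}/K'}_+;\uA\bigr),$$
so it suffices to show this group vanishes.

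Next, I would apply the change-of-groups isomorphism $\tilde{H}^\alpha_{C_{pq}}({C_{pq}/K}_+\wedge Y;\uA)\cong\tilde{H}^\alpha_K(Y;\uA)$ together with the decomposition of the restricted $K$-set $\res_K(C_{pq}/K')$ as a disjoint union of copies of $K/L$ with $L=K\cap K'$ (valid because $C_{pq}$ is abelian). The target becomes a finite direct sum of copies of the underlying abelian group $\tilde{H}^{\res_L(W-V+1)}_L(S^0;\uA)$. The hypothesis that $K$ or $K'$ is not $C_{pq}$ forces $L\in\{e,C_p,C_q\}$, so the problem reduces to vanishing in these three subcases.

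For $L=e$ the group is $\tilde{H}^{|W|-|V|+1}(S^0;\Z)$, and since $W$ and $V$ are even the exponent is odd hence nonzero, so this vanishes. For $L=C_p$ (with $L=C_q$ symmetric), set $a=|W|-|V|+1$ and $b=|W^{C_p}|-|V^{C_p}|+1$, both of which are odd. An inspection of Proposition \ref{Lewis orbits} shows that the value $\uH^\alpha_{C_p}(S^0)(C_p/C_p)$ is nontrivial only when $a=0$, when $b=0$, when $a>0$ with $b<0$ even, or when $a<0$ with $b>1$ odd; the first three configurations are excluded by parity of $a$ and $b$. In the remaining case $a<0$ gives $|W|<|V|$, whereupon the hypothesis $W\ll V$ applied with $S=e,\,T=C_p$ forces $|W^{C_p}|\leq|V^{C_p}|$, equivalently $b\leq 1$, contradicting $b>1$.

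The conceptually critical step is the adjunction reduction in the first paragraph, which identifies all possible module maps with a single cohomology class via Spanier--Whitehead self-duality of orbits; once this identification is in place, the remainder is a parity-driven case analysis, and the precise role of the $W\ll V$ hypothesis is to eliminate the one combinatorially possible configuration not already excluded by the evenness of $W$ and $V$.
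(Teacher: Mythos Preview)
Your proof is correct and lands on the same vanishing statements as the paper, but the organization is genuinely cleaner. The paper splits into cases on $K$: for $K=C_{pq}$ it tracks the image of $1$ and computes $\tilde H^{W+1-V}_{C_{pq}}({C_{pq}/K'}_+)$ separately for each $K'$; for $K=e$ it uses the representability of $\uA_{C_{pq}/e}$ (exactly as you do); and for $K=C_p$ it invokes the $E_p\dashv\Phi_p^\ast$ adjunction of Proposition~\ref{epadj} to convert the question into a $\uH^\ast_{C_p}(S^0)$-module map and then appeals to Lewis's $C_p$ freeness theorem rather than computing directly. Your single reduction---representability of $\Sigma^W\uH^\ast_{C_{pq}}({C_{pq}/K}_+)$ followed by change of groups down to $L=K\cap K'$---collapses all of these into one case analysis on $L\in\{e,C_p,C_q\}$, and your explicit parity check against Proposition~\ref{Lewis orbits} replaces the paper's external reference to \cite{Lew88}. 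What you gain is uniformity and self-containment; what the paper's case split buys is that each individual step stays closer to the specific adjunctions already established in Section~\ref{mack}. One small terminological point: calling $H\uA\wedge{C_{pq}/K}_+$ a ``free cyclic'' $H\uA$-module is slightly imprecise (it is free on the $G$-set $C_{pq}/K$, not on a single point), but the operative fact---that module maps out of it into $M$ are classified by $M^0(C_{pq}/K)$---is exactly the representability the paper also uses, so this does not affect the argument.
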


\begin{proof}
First let $K=\GG$  so that a  $\uH^{\ast}_{\GG}(S^0)$-map  $f: \uH^{\ast -W}_{\GG}(S^0) \to \uH^{\ast+1 -V}_{\GG}({\GG/K'}_+)$ is determined by the image of $1$. We compute the group $\uH^{W+1-V}_{\GG}({\GG/K'}_+)(\GG/\GG) = \tilde{H}^{W+1-V}_{\GG}({\GG/K'}_+).$ If $K' = e,$ then Proposition \ref{AG} implies that $\tilde{H}^{W+1-V}_{\GG}({\GG/K'}_+)$ is zero as $|W+1-V|$ is odd. Propositions \ref{Lewis orbits} and \ref{G/Cp} state that $\tilde{H}^{W+1-V}_{\GG}({\GG/K'}_+)$ is again zero for cases $K' = C_p$ and $C_q$, as either $|W+1-V|$ is positive and odd or $|W+1-V|$ is negative and $|(W+1-V)^{C_p}|,| (W+1-V)^{C_q}| \leq 1$.

Next consider the case $K=e$ where Proposition \ref{AG}, states that  $\uH^{\alpha -W}_{\GG}({\GG/e}_+)$ is non-zero only for $|\alpha|=|W|$ and equal to $\uA_{\GG/e}$ in this case. In this case, a $\uH^{\ast}_{\GG}(S^0)$-module map  $\uH^{\alpha -W}_{\GG}({\GG/e}_+) \to \uH^{\alpha+1 -V}_{\GG}({\GG/K'}_+)$ is classified by an element of 
$$\uH^{\alpha+1 -V}_{\GG}({\GG/K'}_+)(\GG/e)\cong \tilde{H}^{\alpha+1-V}_{\GG}(\GG/e_+ \wedge {\GG/K'}_+)  \cong \oplus_{|{\GG/K^{\prime}}|} \tilde{H}^{\alpha+1 -V}_{\GG}({\GG/e}_+).$$
This is equal to zero as $|\alpha +1 -V| = |W|+1 -|V|$ is odd. 

Finally, it remains to prove the case $K=C_p$ (and thus by symmetry for $K=C_q$). We note from Propositions \ref{epadj} and \ref{G/Cp} that a map of $\GG$-Mackey functors 
$$\uH^{\ast - W}_{\GG}({\GG/C_p}_+) \to \uH^{\ast - V +1}_{\GG}({\GG/K'}_+)$$ 
is equivalent to a map of $C_p$-Mackey functors 
\begin{equation}\label{mcp}
\uH^{\ast - W}_{C_p}(S^0) \to \Phi_p^\ast\uH^{\ast - V +1}_{\GG}({\GG/K'}_+)= \uH^{\ast - V +1}_{C_p}({\GG/K'}_+) .
\end{equation}
Further note that a $\uH^\ast_{\GG}(S^0)$-module map $\uH^{\alpha-W}_{\GG}({\GG/C_p}_+) \to \uH^{\alpha-V+1}_{\GG}({\GG/K'}_+)$ induces a $\uH^{\ast}_{C_{p}}(S^0)$-module map between $\Phi_p^\ast\uH^{\ast-W}_{\GG}({\GG/C_p}_+) \to \Phi_p^\ast\uH^{\ast-V+1}_{\GG}({\GG/K'}_+)$. Also, note that $\Phi_p^\ast \uH^\alpha_{\GG}(\GG/C_p)$ equals $\oplus_q \uH^\alpha_{C_p}(S^0)$ and the unit map $\eta$ in Proposition \ref{epadj} given by the inclusion of the first factor is a $\uH^\ast_{C_p}(S^0)$-module map. Therefore the map in \eqref{mcp} is a $\uH^\ast_{C_p}(S^0)$-module map. It follows that this is determined by an element in the group $\tilde{H}^{W-V+1}_{C_p}(\GG/K'_+)$, a direct sum of copies of $\tilde{H}^{W-V+1}_{C_p}(C_p/e_+)$ and $\tilde{H}^{W-V+1}_{C_p}({C_p/C_p}_+)$. This reduces to the freeness theorem for $C_p$ as in \cite{Lew88}. Hence the proof is complete. 
\end{proof}

Next, we state and prove the freeness theorem. 
\begin{thm} \label{main}
Let $X$ be a $\GG$-space with a filtration $\{X_n\}_{n \geq 0}$ where each $X_n$ is a finite generalized $\GG$-cell complex with only even cells.  Suppose that the filtration satisfies\\
(1) If $\GG \times _K D(W)$ is a cell in $X_n$ and $\GG \times _L D(V)$ be a cell in $X_{n+1}$, then $W\ll V$. \\
(2) For each positive integer $N$, only finitely many cells of the form $\GG \times_{K} D(V)$ arise in the resulting cell complex structure of $X$ that have $|V^H| \leq N$ for all $H$ subgroups of $\GG.$\\
Then $\uH^{\ast}_{\GG}(X_+)$ is a free $\uH^{\ast}_{\GG}(S^0)$-module with summands consisting of  $\uH^{\ast}_{\GG}({X_0}_+)$ and one copy of $\Sigma^{V} \uH^{\ast}_{\GG}({\GG/K}_+)$ for each cell of type ${\GG}_+ \wedge _K D(V)$ occurring in the cell structure of $X$.
\end{thm}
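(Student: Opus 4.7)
The plan is to induct on the filtration level $n$, showing that $\uH^\ast_{C_{pq}}(X_{n,+})$ is free as a $\uH^\ast_{C_{pq}}(S^0)$-module with the claimed summands. The base case is immediate since $X_0$ is a finite disjoint union of orbits and $X_{0,+}$ is a wedge of ${C_{pq}/H}_+$. For the inductive step, passing from $X_n$ to $X_{n+1}$ by attaching cells $\{C_{pq} \times_{K_j} D(V_j)\}_j$ produces a cofibre sequence
$$X_{n,+} \to X_{n+1,+} \to \bigvee_j {C_{pq}/K_j}_+ \wedge S^{V_j},$$
whose associated long exact sequence in $RO(C_{pq})$-graded cohomology collapses into a short exact sequence precisely when the connecting homomorphism $\delta$ vanishes. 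By the inductive hypothesis, $\uH^\ast_{C_{pq}}(X_{n,+})$ splits as a direct sum of summands $\Sigma^{W_i}\uH^\ast_{C_{pq}}({C_{pq}/K'_i}_+)$, one per previously attached cell; since $\delta$ is $\uH^\ast_{C_{pq}}(S^0)$-linear it decomposes into components $\Sigma^{W_i}\uH^\ast_{C_{pq}}({C_{pq}/K'_i}_+) \to \Sigma^{V_j-1}\uH^\ast_{C_{pq}}({C_{pq}/K_j}_+)$, each of which Lemma \ref{prop 1} kills under $W_i \ll V_j$, provided at least one of $K_j$ or $K'_i$ is a proper subgroup of $C_{pq}$.

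The remaining components are those with $K_j = K'_i = C_{pq}$; each is a $\uH^\ast_{C_{pq}}(S^0)$-linear map between shifts of $\uH^\ast_{C_{pq}}(S^0)$, hence determined by the image of $1$ in $\tilde H^{W_i-V_j+1}_{C_{pq}}(S^0)$. Setting $\beta := W_i - V_j + 1$, the element $\beta$ is odd and its fixed-point dimensions are constrained by $W_i \ll V_j$: explicitly, $|\beta^H| \leq -1$ for $H \in \{e, C_p, C_q\}$ forces $|\beta^K| \leq 1$ for every $K$ containing $H$. A sign-pattern case analysis on $(|\beta|, |\beta^{C_p}|, |\beta^{C_q}|, |\beta^{C_{pq}}|)$ then shows $\tilde H^\beta_{C_{pq}}(S^0) = 0$ in every configuration compatible with $W_i \ll V_j$: in each case either a direct appeal to one of Propositions \ref{sign}, \ref{calc1odd}, \ref{calc1b}, \ref{calc2}, \ref{calc3} yields the vanishing, or the recursion in Propositions \ref{shex-e} and \ref{shex-p} reduces to a case already covered.

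With $\delta = 0$, the short exact sequence
$$0 \to \bigoplus_j \Sigma^{V_j}\uH^\ast_{C_{pq}}({C_{pq}/K_j}_+) \to \uH^\ast_{C_{pq}}(X_{n+1,+}) \to \uH^\ast_{C_{pq}}(X_{n,+}) \to 0$$
splits: each generator of a summand on the right admits a lift to $\uH^\ast_{C_{pq}}(X_{n+1,+})$, since its lifting obstruction is killed by $\delta$, and extending $\uH^\ast_{C_{pq}}(S^0)$-linearly produces a section. Passing to the colimit, condition (2) bounds the cohomology in each $RO(C_{pq})$-graded degree and gives the free decomposition for $X$. The main obstacle is the second paragraph: Lemma \ref{prop 1} does not address the case $K_j = K'_i = C_{pq}$, and showing $\tilde H^{W-V+1}_{C_{pq}}(S^0) = 0$ for all even $W \ll V$ requires a systematic enumeration drawing on essentially all of the computational work in Sections \ref{comp1} and \ref{comp2}, the key observation being that $W \ll V$ precisely excludes the fixed-point patterns for which this group would be nonzero.
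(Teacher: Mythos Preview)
Your proposal is correct and follows essentially the same route as the paper: induct on the filtration, use the cofibre sequence to get a long exact sequence, kill the connecting map $\delta$ summand by summand via Lemma~\ref{prop 1} (when a proper orbit is involved) and via the vanishing of $\tilde H^{W-V+1}_{C_{pq}}(S^0)$ for $W\ll V$ (when both orbits are $C_{pq}/C_{pq}$), then split the resulting short exact sequence using freeness of the inductive hypothesis. The paper's case analysis for the $K=K'=C_{pq}$ step is slightly more explicit than yours---it trichotomizes on the sign of $|W|-|V|$ and invokes Propositions~\ref{calc1b}, \ref{sign}, \ref{calc2}, \ref{calc3} directly rather than appealing to the recursions---and its colimit argument spells out that surjectivity kills $\lim^1$ and that condition~(2) together with Proposition~\ref{sign} makes the resulting product finite in each degree, but these are the only differences.
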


\begin{proof}
We prove the theorem inductively accordingly as cells are attached to $X$. That is, we assume the conclusion for $X_n$, and assume that  $X_{n+1}$ is obtained out of $X_n$ by attaching a single cell $\GG\times_H D(V)$, and conclude the result for $X_{n+1}$. Observe that the attachment implies that the cofibre of the map ${X_n}_+ \to {X_{n+1}}_+$ is ${\GG}_+\wedge_HS^V$. Thus we have the associated long exact sequence 
$$\cdots \to \uH^{\alpha}_{\GG}({\GG}_+ \wedge_H S^V) \to \uH^{\alpha}_{\GG}({X_{n+1}}_+) \to \uH^{\alpha}_{\GG}({X_n}_+) \stackrel{\delta}{\to} \uH^{\alpha +1}_{\GG}({\GG}_+ \wedge_H S^V) \to \cdots$$
We prove that under the hypothesis on cell attachments in Theorem \ref{main}, the map $\delta$ is forced to be zero. In order to show this, it suffices to consider each summand of the domain separately. That is, it suffices to consider possible $\uH^\ast_{\GG}(S^0)$-module maps 
$$\uH^{\ast - W}_{\GG}({\GG/K}_+) \to \uH^{\ast - V +1}_{\GG}({\GG/K'}_+)$$ 
for all $K$ and $K^{\prime}$  are  subgroups  of $\GG$. The computations split into two cases where both $K$ and $K'$ are not $\GG$, and where $K,K'=\GG$. The former is proved in the Lemma \ref{prop 1} above and we conclude the latter follows from our computations in the previous sections. 

Let $K,K'=\GG$ so that we have a $\uH^\ast_{\GG}(S^0)$-module map 
$$\uH^{\ast - W}_{\GG}(S^0) \to \uH^{\ast - V +1}_{\GG}(S^0)$$ 
Such a map is determined by the image of $1$, which lies in the group $\tilde{H}^{W+1-V}_{\GG}(S^0)$. Let $\alpha = W+1-V$ and note that $\alpha$ is odd as $W$ and $V$ are even. If $|W|<|V|$, then $W\ll V$ implies that all the fixed points have dimension $\leq 1$, and so, by Proposition \ref{calc1b}, $\uH^\alpha_{\GG}(S^0)=0$. If $|W|\geq |V|$ and $|W^{C_p}|< |V^{C_p}|$ or $|W^{C_q}|< |V^{C_q}|$, then $|W^{\GG}|\leq |V^{\GG}|$ and we note $\alpha$ satisfies $|\alpha|\geq 1$, one of $|\alpha^{C_p}|$ or $|\alpha^{C_q}|$ is $<0$ and $|\alpha^{\GG}|\leq 1$. In this case Proposition \ref{calc2} and Proposition \ref{calc3} implies $\uH^\alpha_{\GG}(S^0)=0$. Finally when $|W|\geq |V|$, $|W^{C_p}|\geq |V^{C_p}|$ and $|W^{C_q}| \geq |V^{C_q}|$, we have that $|\alpha^H|$ is positive if $H\neq \GG$ so that by Proposition \ref{sign}, $\uH^\alpha_{\GG}(S^0)=0$. 

As a consequence we have the following short exact sequence 
$$0 \to \uH^{\alpha-V}_{\GG}({\GG/H}_+) \to \uH^{\alpha}_{\GG}({X_{n+1}}_+) \to \uH^{\alpha}_{\GG}({X_{n}}_+) \to 0.$$
 Since $\uH^{\alpha}_{\GG}({X_n}_+)$ is free, the Mackey functor $\uH^{\alpha}_{\GG}({X_{n+1}}_+)$ is the direct sum of $\uH^{\alpha}_{\GG}({X_{n}}_+)$ and $\uH^{\alpha-V}_{\GG}(\GG/H_+)$. This completes the induction argument. 

In order to verify the conclusion for $X$, we need to verify that the direct sum decomposition passes to the colimit. However, we have that the map $\uH^{\alpha}_{\GG}({X_{n+1}}_+) \to \uH^{\alpha}_{\GG}({X_{n}}_+)$ is surjective so that  the $\lim^{1}$ term vanishes. Thus we have $$\uH^{\alpha}_{\GG}(X_+)  =  \prod \uH^{\alpha -V}_{\GG}({\GG/H}_+).$$ 
Now by condition 2), given a value of $\alpha$, for all but finitely many $V$ the fixed points of $\alpha - V$ are all negative dimensional. Hence, Proposition \ref{sign} implies that the product is actually finite for every grading $\alpha$, and hence it is isomorphic to the direct sum. Therefore, the result follows.
\end{proof}

A couple of remarks about the Theorem are in order, comparing it to the $C_p$-freeness Theorem in \cite{Lew88}.  
\begin{remark}
We note that the condition $W\ll V$ in the Theorem does not precisely resemble the condition $|W|<|V|$ implies $|W^G|\leq |V^G|$ in \cite{Lew88} for $G=C_p$. However, for the group $C_p$, the only way in which $H$ is a proper subgroup of $K$ is when $H=e$ and $K=C_p$. So our condition reduces to the condition of Lewis in \cite{Lew88}. Also, note that under the weaker assumption ``$|W|<|V|$ implies $|W^G|\leq |V^G|$", the cohomology at $W+1-V$ is not $0$, thus, our arguments would not prove freeness results in this case. An example of this case lies in the computations of Proposition \ref{calc1odd} where $|W|<|V|$, $|W^{C_p}| > |V^{C_p}|$ and $|W^{\GG}| \leq |V^{\GG}|$ and the cohomology at $W+1-V$ is either $\KK_p\bZp$ or $\KK_p\bZp \oplus \KK_q\bZq$. 
\end{remark}

\begin{remark}
An important observation is that a slightly stronger version of the freeness Theorem may be deduced from the above. If we assume $W$ and $V$ are even with $|W|<|V|$, $|W^{C_p}|=|V^{C_p}|$, and $|W^{C_q}|=|V^{C_q}|$, then the cohomology at $W+1-V$ is $0$ by Proposition \ref{calc1odd} a) (for any values of the $\GG$-fixed points). Therefore, if one is allowed to attach a cell $D(V)$ after $D(W)$, the freeness theorem would still hold. Hence, we may include this in the definition of $\ll$ to obtain a slightly stronger freeness theorem. 
\end{remark}

\subsection{Applications} 
We prove that the freeness theorem above applies to certain $\GG$-complex projective spaces and Grassmannians by checking the criteria of Theorem \ref{main} in these cases. We first outline the case of projective spaces, and then point out the generalization for Grassmannians. For a complex $\GG$-representation $V$, denote by $\C P(V)$ the set of lines in $V$ which inherits a natural $\GG$-action. We may write such a $V$ as a direct sum of irreducibles as $\sum n_i \xi^i$ and ask when the cohomology of $\C P(V)$ is free depending on the numbers $n_i$. While we don't answer this question directly, we verify that in the cases $V=\sum_{i=0}^k \xi^i$ and $V=\UU_\C$ a complete $\GG$-universe, the conditions of the freeness theorem are satisfied.    

Recall that for a finite group $G$, a complete $G$-universe is defined as  a countably infinite dimensional unitary representation $\UU_\C$ which contains infinitely many copies of each finite dimensional representation. Any two such complete universes are unitarily equivalent, and an example is obtained by taking the sum of countably many copies of the regular representation. In our specific case of $G=\GG$, an example of such an universe is the direct sum
$$\UU_\C = \oplus_{k =0}^{\infty} \xi^k$$ 
built up from the increasing sequence of sub-representations $\UU_\C(n) :=\oplus_{k=0}^{n} \xi^k$. 

Consider the complex projective space $\C P(V)$ where $V=\sum_{k=0}^n \phi_k$ with each $\phi_k$ an irreducible (hence, one-dimensional) $\GG$-representation. Points in $\C P(V)$ may be expressed by homogeneous coordinates of the form $\langle x_0, x_1, \cdots, x_n \rangle$, $x_k \in \phi_k$ such that not all of $x_k$ are zero. The $\GG$-action on such points is coordinate-wise. We note from \cite{Lew88} that $\C P(V)$ and $\C P(V\otimes \xi)$ are homeomorphic as $\GG$-spaces. For, if $\langle x_0, x_1, \cdots, x_n \rangle$ is in $\C P(V)$, the action of the generator $g$ of $\GG$ on such an element is of the form 
$$g. \langle x_0, x_1, \cdots, x_n \rangle = \langle \zeta^{l_0} x_0, \zeta^{l_1} x_1, \cdots, \zeta^{l_n} x_n \rangle$$
where $\zeta$, a fixed primitive $pq^{th}$ root of unity. However, as we are in a projective space  multiplying throughout by $\zeta$ does not change the result, and so, 
$$g. \langle x_0, x_1, \cdots, x_n \rangle = \langle \zeta^{l_0+1} x_0, \zeta^{l_1+1} x_1, \cdots, \zeta^{l_n+1} x_n \rangle$$
This is precisely the action on $\C P(V\otimes \xi)$. We specialize to $V=\UU_\C(n)$. Thus, we may represent points as $\langle x_0, x_1, \cdots, x_n \rangle$ for $x_k \in \xi^k$, but also as $\langle x_0, x_1, \cdots, x_n \rangle$, $x_k \in \xi^s \otimes \xi^k$ where $s$ is fixed. 

We observe that $\C P(\UU_\C(k-1))$ is an equivariant subspace of $\C P(\UU_\C(k))$ as those points whose last coordinate is $0$. Such inclusions are clearly equivariant cofibrations and the $\GG$-projective space $\C P(\UU_\C)$ is the colimit of the sequence of spaces $\{ \C P (\UU_\C(n)) \}_{n \geq 1}.$

Now we show that the space $\C P(\UU_\C)$ is a generalized $\GG$-cell complex with a filtration as in Theorem \ref{main}. We do this by inductively constructing generalized cell-structures on  $\C P(\UU_\C(n))$.

  Let $W_n$ be the representation $\xi^{-n} \otimes \UU_\C(n-1).$ The unit disc $D(W_n)$ is described as the set of points $(x_0, x_1, \cdots, x_{n-1})$, where $x_j \in \xi^{-n} \otimes \xi^j$ and $\sum |x_j|^2 <1$.  Define a $\GG$-equivariant map from $ \phi_n : D(W_n) \to \C P(\UU_\C(n)\otimes \xi^{-n})$ by 
$$\phi_n(x_0, x_1, \cdots, x_{n-1}) = \langle x_0, x_1, \cdots, x_{n-1}, 1- \sum_{k=0}^{n-1}|x_k|^2\rangle.$$ 
From the discussion above, $\C P(\UU_\C(n)\otimes \xi^{-n})\cong \C P(\UU_\C(n))$ so we may also write this as $\phi_n : D(W_n) \to \C P(\UU_\C(n))$. It is clear that the restriction  ${\phi_n}_{|S(W_n)}$ is contained in $\C P(\UU_\C(n-1))$ and that the map 
$$\phi_n : D(W_n) \setminus S(W_n) \to \C P(\UU_\C(n)) \setminus \C P(\UU_\C(n-1))$$
 is a homeomorphism. Therefore, the space $\C P(\UU_\C(n))$ is obtained by attaching a cell $\GG\times_{\GG}D(W_n)$ along the map ${\phi_n}_{|S(W_n)} : S(W_n) \to \C P(\UU_\C(n-1))$. Hence, we conclude that $\C P(\UU_\C(n))$ is a generalized $\GG$-cell complex with cells $D(W_k)$ for $1 \leq k \leq n.$  

 Observe that the values of $|W_k^H|$ for various $H$ are given by  
$$(|W_k|, |W_k^{C_p}|, |W_k^{C_q}|, |W_k^{\GG}|) =(2k, 2\lfloor\frac{k}{p}\rfloor, 2\lfloor\frac{k}{q}\rfloor, 2\lfloor\frac{k}{pq}\rfloor)$$  
Thus each $W_k$ is even and we have $W_k\ll W_{k+1}$. Therefore, we deduce the following theorem from Theorem \ref{main}.

\begin{thm} \label{freeproj}
For $1\leq n \leq \infty$, the cohomology  $\uH^{\ast}_{\GG}(\C P(\UU_\C(n)))$ is a free module over $\uH^{\ast}_{\GG}(S^0)$ generated by one element in dimension zero and one in each dimension $W_k$ for $1\leq k \leq n$, that is,   
$$ \uH^{\ast}_{\GG}(\C P(\UU_\C(n))) \cong \bigoplus_{k = 0}^n\uH^{\ast - W_k}_{\GG}(S^0).$$
\end{thm}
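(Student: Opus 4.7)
The plan is to apply Theorem \ref{main} directly to the generalized $C_{pq}$-cell complex structure just constructed on $\C P(\UU_\C(n))$, namely the filtration with $X_k = \C P(\UU_\C(k))$ for $0 \leq k \leq n$, where $X_k$ is obtained from $X_{k-1}$ by attaching the single cell $C_{pq} \times_{C_{pq}} D(W_k)$ along the restriction of $\phi_k$ to $S(W_k)$.

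First I would verify the hypotheses of Theorem \ref{main}. The formula
$$(|W_k|, |W_k^{C_p}|, |W_k^{C_q}|, |W_k^{C_{pq}}|) = \bigl(2k,\ 2\lfloor k/p\rfloor,\ 2\lfloor k/q\rfloor,\ 2\lfloor k/(pq)\rfloor\bigr)$$
already shows that each $W_k$ is an even representation. For hypothesis (1), since the cells are attached in the linear order $W_1, W_2, \ldots$, it suffices to observe that $W_k \ll W_{k+1}$. But each coordinate of the tuple above is weakly monotone in $k$, so whenever a fixed-point dimension strictly increases at some subgroup $S$, every fixed-point dimension at a larger subgroup $T \supseteq S$ also increases weakly; this is exactly the condition $W_k \ll W_{k+1}$. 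For hypothesis (2), relevant only when $n = \infty$, note that the condition $|W_k^H| \leq N$ for every $H$ is, in particular, the condition $|W_k| = 2k \leq N$, which holds for only finitely many $k$.

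With the hypotheses verified, Theorem \ref{main} yields a decomposition of $\uH^*_{C_{pq}}((\C P(\UU_\C(n)))_+)$ as a direct sum of $\uH^*_{C_{pq}}((X_0)_+)$ together with one summand $\Sigma^{W_k}\uH^*_{C_{pq}}((C_{pq}/C_{pq})_+) \cong \uH^{*-W_k}_{C_{pq}}(S^0)$ for each cell $C_{pq} \times_{C_{pq}} D(W_k)$. Since $X_0 = \C P(\UU_\C(0))$ is a single point, $(X_0)_+ \simeq S^0$, contributing the $k=0$ summand, and the remaining summands match the stated formula. There is no genuine obstacle; the only point requiring care is the verification of the $\ll$ condition on the tower of cells, and this is transparent from the explicit formulas for $|W_k^H|$. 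For the $n=\infty$ case the direct sum passes to the colimit by the surjectivity and $\varprojlim^1$-vanishing argument already incorporated into the proof of Theorem \ref{main}.
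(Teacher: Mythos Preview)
Your proposal is correct and follows essentially the same approach as the paper: verify from the explicit fixed-point formula that each $W_k$ is even and that $W_k \ll W_{k+1}$ (indeed all coordinates are weakly monotone in $k$, so the conclusion of the $\ll$ condition holds unconditionally), then invoke Theorem~\ref{main}. Your explicit check of hypothesis~(2) and the identification of $X_0$ as a point are minor elaborations the paper leaves implicit, but the argument is the same.
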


Theorem \ref{main} also applies for the Grassmannian manifold $G(\UU_\C(n), k)$ of complex $k$-dimensional subspaces of $\UU_\C(n)$. Recall the generalized $\GG$-equivariant cell structure of $G(\UU_\C(n),k)$ from Chapter 7 of \cite{FL04}, which is an equivariant version of the Schubert cell structure on Grassmannians.  Given a sequence of integers $0 \leq a_1 \leq a_2 \leq \cdots \leq a_k \leq n-k$ which we denote by $\ua$, one has a representation cell $D(W_{\ua})$ where the representation $W_{\ua}$ is  
$$W_{\ua} = \bigoplus _{i=1}^k \bigoplus_{\substack{j=1\\  j \notin \{a_1+1, \cdots a_{i-1}+i-1\}}}^{a_i +i-1} \xi^{-(a_i +i)} \otimes \xi^j.$$  
We note that the attaching map of the cell $D(W_{\ub})$ hits the cell $D(W_{\ua})$ if and only if $\ua\leq \ub$ (that is, each entry of $\ua$ is $\leq$ the corresponding entry of $\ub$). 

Observe that the values of $|W_{\ua}^H|$ for various $H$ are given by  
$$(|W_{\ua}|, |W_{\ua}^{C_p}|, |W_{\ua}^{C_q}|, |W_{\ua}^{\GG}|) =(2 \sum_{i=1}^k a_i, 2\sum_{i=1}^k \lfloor\frac{ a_i}{p}\rfloor, 2 \sum_{i=1}^k \lfloor\frac{a_i}{q}\rfloor, 2 \sum_{i=1}^k \lfloor\frac{a_i}{pq}\rfloor)$$  
Thus each $W_{\ua}$ is even and for $\ua \leq \ub$ we have $W_{\ua}\ll W_{\ub}$. Hence Theorem \ref{main}  applies and we deduce

\begin{thm} \label{freegra}

For $1\leq n \leq \infty$,   $\uH^{\ast}_{\GG}(G(\UU_\C(n)))$ is the free over $\uH^{\ast}_{\GG}(S^0)$ generated by one element in each dimension $W_{\ua}$ for $\ua = (a_1,\cdots, a_k)$ satisfying $0 \leq a_1 \leq a_2 \leq \cdots \leq a_k \leq n-k$. 
\end{thm}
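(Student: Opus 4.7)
The plan is to verify the hypotheses of Theorem \ref{main} for the Grassmannian $G(\UU_\C(n), k)$ equipped with the equivariant Schubert cell structure recalled from \cite{FL04} just before the statement. Concretely, the cells are the $D(W_{\ua})$ indexed by partitions $\ua=(a_1,\dots,a_k)$ with $0\leq a_1\leq\cdots\leq a_k\leq n-k$, and the attaching map of $D(W_{\ub})$ lands in the union of the cells $D(W_{\ua})$ with $\ua\leq \ub$ (componentwise). To obtain a filtration $\{X_m\}$ meeting the hypotheses of Theorem \ref{main}, I would choose any linear refinement of this partial order (for instance, order first by $|\ua|:=\sum a_i$ and break ties lexicographically) and let $X_m$ consist of the first $m$ cells in this order. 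Since the partial order of attachments is refined by the filtration, any cell appearing in $X_m$ is attached before any cell of $X_{m+1}$.

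Next I would verify evenness and the key relation $\ll$. From the displayed formula
$$\bigl(|W_{\ua}|,|W_{\ua}^{C_p}|,|W_{\ua}^{C_q}|,|W_{\ua}^{C_{pq}}|\bigr)=\Bigl(2\sum_i a_i,\,2\sum_i \lfloor a_i/p\rfloor,\,2\sum_i\lfloor a_i/q\rfloor,\,2\sum_i\lfloor a_i/pq\rfloor\Bigr),$$
every fixed-point dimension of $W_{\ua}$ is even, so each cell is even in $RO(C_{pq})$. Moreover, if $\ua\leq\ub$ componentwise, then for each $i$ and each divisor $d\in\{1,p,q,pq\}$ one has $\lfloor a_i/d\rfloor\leq\lfloor b_i/d\rfloor$; summing gives $|W_{\ua}^H|\leq |W_{\ub}^H|$ for every subgroup $H\leq C_{pq}$. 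The condition $W_{\ua}\ll W_{\ub}$ therefore holds trivially: the conclusion $|W_{\ua}^T|\leq |W_{\ub}^T|$ already holds for every $T$, regardless of the hypothesis.

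The remaining hypothesis is the finiteness condition (2) of Theorem \ref{main}, which is only nontrivial for $n=\infty$. Given $N\geq 0$, any $\ua$ with $|W_{\ua}^H|\leq N$ for all $H$ satisfies in particular $|W_{\ua}|=2\sum a_i\leq N$, forcing only finitely many partitions $\ua$. Hence condition (2) holds, and Theorem \ref{main} applies and yields the desired free decomposition
$$\uH^{\ast}_{C_{pq}}\!\bigl(G(\UU_\C(n),k)_+\bigr)\cong \bigoplus_{\ua}\Sigma^{W_{\ua}}\uH^{\ast}_{C_{pq}}\!\bigl({C_{pq}/C_{pq}}_+\bigr),$$
where $\ua$ ranges over admissible partitions and each stabilizer is $C_{pq}$ because the Schubert cells in \cite{FL04} are of the form $C_{pq}\times_{C_{pq}}D(W_{\ua})$.

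I do not anticipate a substantial obstacle here once the Schubert cell structure is in hand, since all the work is already encoded in the fixed-point dimension formula. The one point worth stating carefully is that the partial-order structure of Schubert attachments matches what Theorem \ref{main} needs: the $\ll$ condition only needs to relate each newly attached cell $D(W_{\ub})$ to the cells $D(W_{\ua})$ that lie in its attaching image, i.e., those with $\ua\leq\ub$, and this is precisely what the monotonicity of the floor-function formula gives. The remaining $C_{pq}$-freeness is then a formal consequence of the freeness theorem proved via the computations of Sections \ref{comp1} and \ref{comp2}.
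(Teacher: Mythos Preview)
Your proposal is correct and follows essentially the same route as the paper: both arguments appeal directly to Theorem~\ref{main} after recording the equivariant Schubert cell structure from \cite{FL04}, reading off the fixed-point dimension formula for $W_{\ua}$, and noting that $\ua\leq\ub$ implies $W_{\ua}\ll W_{\ub}$; you additionally spell out the filtration, the finiteness condition~(2), and the fact that the stabilizers are all of $C_{pq}$, which the paper leaves implicit.
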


\mbox{ }\\
\end{document}